\let\Horig\H
\newtheorem{theorem}{Theorem}[section]
\newtheorem{proposition}[theorem]{Proposition}
\newtheorem{lemma}[theorem]{Lemma}
\newtheorem{remark}[theorem]{Remark}
\newcommand{\dist}{{\rm dist}}
\renewcommand{\H}{{\cal H}}
\title{Equidistribution on Big Line Bundles with Singular Metrics for Moderate Measures}
\author{Guokuan SHAO}
\begin{document}

\maketitle

\begin{abstract}
We establish an equidistribution theorem for the common zeros of random holomorphic sections of high powers of several singular Hermitian
big line bundles associated to moderate measures.
\end{abstract}

\noindent
{\bf Classification AMS 2010:} 32A60, 32L10, 32U40.

\noindent
{\bf Keywords: } equidistribution, big line bundle, singular metric, moderate measure, general position, holomorphic section,
Fubini-Study current, meromorphic transform, intermediate degree, multi-projective space.

\section{Introduction}
Distribution of zeros of random polynomials is a classical subject.
The systematic research was developed in the papers \cite{bbl}, \cite{bp}, \cite{bs}, \cite{et}, \cite{ha}, \cite{ka}, \cite{sv}.
The general results about the holomorphic sections
of a positive line bundle associated to the Lebesgue measures induced by the Fubini-Study metric were obtained
by Shiffman-Zelditch\cite{sz}. Dinh-Sibony\cite{ds1} extended the equidistribution property
in the case of general measures and obtained a good estimate of the convergence speed.
The potential-theoretic approach from Forn\ae ss-Sibony\cite{fs} was used in the previous works.
Our paper \cite{sh} gave a large family of singular moderate measures that satisfies the equidistribution property.

The equidistribution property and convergence speed of holomorphic sections
of singular Hermitian holomorphic line bundles have been very active recently.
see \cite{cm1}, \cite{cm2}, \cite{cm3}, \cite{cmm}, \cite{cmn}, \cite{dmm} for complete references.
Dinh-Ma-Marinescu \cite{dmm} explored the equidistribution of zeros of random holomorphic sections for singular Hermitian metrics
with a convergence speed.
Coman-Marinescu-Nguy\^{e}n \cite{cmn} studied furtherly the equidistribution of common zeros of sections of several big line bundles.
The measure of the equidistribution theorem in \cite{cmn} is the standard one induced by the Fubini-Study metric.
On the other hand, our previous work \cite{sh} investigated the equidistribution of zeros of sections of a single positive holomorphic line bundle
associated to moderate measures. The metric on the line bundle in the latter work is smooth.
In this paper, the normalized currents are defined by the common zeros of \emph{m}-tuples of random sections of
high powers of \emph{m} singular Hermitian big line bundles on a compact K\"{a}hler manifold.
Using certain moderate measures we show that
these normalized currents distribute asymptotically to the wedge product of the curvature currents.
Consequently, we generalize both the main theorems in \cite{cmn} and \cite{sh}.

Our method follows the approach of Coman-Marinescu-Nguy\^{e}n \cite{cmn}.
Adapting their work, we prove that the intersections of currents of integration along subvarieties
are well-defined almost everywhere with respect to a finite product of moderate measures (see Section 2).
Moreover, their strategy of using Dinh-Sibony equidistribution theory leads us to obtain
an estimate for the convergence speed (see Section 3 and Section 4).
Then we combine the above with the technical analysis of moderate measures to achieve our results.
Here our hard core work consists of estimating efficiently some constants
which are intimately associated with multi-projective spaces (see Section 3).

We start with the basic settings of this paper. Let $X$ be a compact K\"{a}hler manifold of dimension $n$
with a fixed K\"{a}hler form $\omega$.
For every complex vector space $V$ of finite dimension,
let $\omega_{FS}$ be the standard K\"{a}hler form induced by the Fubini-Study metric on
its projective space $\mathbb{P}(V)$
normalized by $\int_{\mathbb{P}(V)}\omega_{FS}^{\dim\mathbb{P}(V)}=1$.
Recall that a singular Hermitian holomorphic line bundle $(L, h)$
is a holomorphic line bundle $L$ with a Hermitian metric which is given in any trivialization by a weight
function $e^{-\varphi}$ such that $\varphi$ is locally integrable. Let $c_{1}(L, h)$ be its curvature current
which represents the first Chern class. To be precise, if $e_{L}$ is a holomorphic frame of $L$ on an open subset
$U\subset X$, then $|e_{L}|_{h}^{2}=e^{-2\varphi}$, $c_{1}(L, h)=dd^{c}\varphi$ on $U$. Here $d=\partial+\bar\partial$,
$d^{c}=\frac{1}{2\pi i}(\partial-\bar\partial)$.
The case when $c_{1}(L, h)\geq 0$ as a current is particularly interesting.
We say that a holomorphic line bundle $L$ is $big$ if it admits a singular metric $h$ with that $c_{1}(L,h)\geq\epsilon\omega$
for some constant $\epsilon>0$, see \cite[Lemma 2.3.6]{mm1}.

Let $(L_{k}, h_{k})$, $1\leq k\leq m\leq n$, be $m$ singular Hermitian holomorphic line bundles on $X$.
Let $L_{k}^{p}$ be the $p$th tensor powers of $L_{k}$. Denote by $H_{(2)}^{0}(X, L_{k}^{p})$ the Bergman space
of $L^{2}$-holomorphic sections of $L_{k}^{p}$ relative to the metric $h_{k,p}:=h_{k}^{\otimes p}$ induced by $h_{k}$
and the volume form $\omega^{n}$ on $X$, endowed with the inner product
\begin{equation*}
\langle S, S' \rangle_{k,p}:=\int_{X}h_{k,p}(S, S')\omega^{n},
\end{equation*}
$\forall S, S'\in H_{(2)}^{0}(X, L_{k}^{p})$. Let $\mathbb{P}H_{(2)}^{0}(X, L_{k}^{p})$ be the associated projective space.
Set $d_{k,p}:=\dim H_{(2)}^{0}(X, L_{k}^{p})-1$.
We have $p^{n}/C\leq d_{k,p}\leq Cp^{n}$, where $C>0$ is a constant independent of $k$ and $p$ (see Theorem 4.1).
Now we consider the multi-projective space
\begin{equation}
\mathbb{X}_{p}:=\mathbb{P}H_{(2)}^{0}(X, L_{1}^{p})\times ...\times \mathbb{P}H_{(2)}^{0}(X, L_{m}^{p})
\end{equation}
endowed with a probability measure $\sigma_{p}$ for every $p\geq 1$.
Let $\pi_{k,p}: \mathbb{X}_{p}\rightarrow\mathbb{P}H_{(2)}^{0}(X, L_{k}^{p})$ be the natural projections.
Denote by $[S=0]$ the current defined by the zero set of $S\in H^{0}(X, L_{k}^{p})$.
Set
\begin{equation}
[S_{p}=0]:=[S_{p1}=0]\wedge...\wedge [S_{pm}=0], \quad\forall S_{p}=(S_{p1},...,S_{pm})\in\mathbb{X}_{p},
\end{equation}
whenever it is well-defined. Let
\begin{equation}
\mathbb{P}^{X}:=\prod_{p=1}^{\infty}\mathbb{X}_{p}.
\end{equation}
It is a probability space with the product measure $\sigma=\prod_{p=1}^{\infty}\sigma_{p}$.

A function $u: M\to\mathbb{R}$ defined on a metric space $(M, \rm dist)$
is said to be of class $\mathscr{C}^{\rho}$ for some exponent $0<\rho<1$, with modulus $c$ if
\begin{equation*}
\sup_{\substack{x,y\in M \\x\neq y}}\frac{|u(x)-u(y)|}{\dist(x,y)^{\rho}}\leq c.
\end{equation*}
Consider a complex manifold $M$ with a fixed volume form, let $\gamma$ be a closed real current of bidegree $(1,1)$ on $M$.
An upper-semi continuous function $u: M\to [-\infty, \infty)$ in $L^{1}_{loc}(M)$ is said to be $\gamma$-p.s.h. if $dd^{c}u+\gamma\geq 0$.
When $M$ is a compact K\"{a}hler manifold of dimension $N$, $\omega^{N}$ is the standard volume form,
we say that a function $\phi$ on $M$ is quasi-plurisubharmonic (q.p.s.h. for short) if it is $c\omega$-p.s.h. for some constant $c>0$.
Set
\begin{equation}
\mathcal{F}:=\{\phi ~q.p.s.h. ~on~ M: dd^{c}\phi\geq -\omega, \max_{M}\phi =0\}.
\end{equation}
Recall that a positive measure $\mu$ is $(c,\alpha)$-moderate for some constants $c>0, \alpha >0$ if
\begin{equation}
\int_{M}\exp(-\alpha \phi)d\mu\leq c
\end{equation}
for all $\phi\in\mathcal{F}$. The measure $\mu$ is called $moderate$ if it is $(c,\alpha)$-moderate for some constants $c>0, \alpha >0$.

We say that the analytic subsets $A_{1},...,A_{m}$ on $X$, $m\leq n$, are
in $general$ $position$ if codim$A_{i_{1}}\cap...\cap A_{i_{k}}\geq k$ for every $1\leq k\leq m$
and $1\leq i_{1}<...<i_{k}\leq m$.
We denote by $\dist$ the distance on $X$ induced by the fixed K\"{a}hler form $\omega$.
Let $\phi: U\rightarrow [-\infty, \infty)$ be a function on an open subset $U\subset X$,
$A\subset X$ a proper analytic subset.
Following the terminology in \cite{cmn}, $\phi$ is called {\it H\"{o}lder with singularities} along $A$
if there are positive constants $c, \delta$ and $0<\nu\leq 1$
such that
\begin{equation}
|\phi(z)-\phi(w)|\leq\frac{c\dist(z,w)^{\nu}}{\min\{\dist(z,A),\dist(w,A)\}^{\delta}}
\end{equation}
for all $z,w\in U\setminus A$.
A singular metric $h$ of $L$ is defined to be {\it H\"{o}lder with singularities} along $A$ if every local weight of $h$
is H\"{o}lder with singularities along $A$.
For motivations as well as examples of such metrics, we refer the readers to \cite{cmn}.

The multi-projective space $\mathbb{X}_{p}$ in \cite{cmn} is equipped with the
probability measure $\sigma_{p}^{0}$ which is the product of the Lebesgue measures induced by Fubini-Study metrics on the components.
In this paper, we define singular moderate measures $\sigma_{p}$ as perturbations of $\sigma_{p}^{0}$ on $\mathbb{X}_{p}$.
For each $p\geq 1, 1\leq k\leq m, 1\leq j\leq d_{k,p}$, let $u_{j}^{k,p}:\mathbb{P}H_{(2)}^{0}(X, L_{k}^{p})\rightarrow\mathbb{R}$
be an upper-semi continuous function. Fix $0<\rho<1$ and a sequence of positive constants $\{c_{p}\}_{p\geq 1}$.
We call $\{u_{j}^{k,p}\}$ {\it a family of $(c_{p},\rho)$-functions} if all $u_{j}^{k,p}$
satisfy the following two conditions:
\begin{flushleft}
$\bullet$ $u_{j}^{k,p}$ is of class $\mathscr{C}^{\rho}$ with modulus $c_{p}$, \\
$\bullet$ $u_{j}^{k,p}$ is a $c_{p}\omega_{FS}$-p.s.h.
\end{flushleft}
Then for each $p\geq 1$, there is a probability measure
\begin{equation}
\sigma_{p}=\prod_{k=1}^{m}\bigwedge_{j=1}^{d_{k,p}}\pi_{k,p}^{\star}(dd^{c}u_{j}^{k,p}+\omega_{FS})
\end{equation}
on $\mathbb{X}_{p}$.  By \cite[Theorem 1.1, Remark 2.12]{sh},
$\bigwedge_{j=1}^{d_{k,p}}(dd^{c}u_{j}^{k,p}+\omega_{FS})$ is a moderate measure on $\mathbb{P}H_{(2)}^{0}(X, L_{k}^{p})$
when $c_{p}\leq 1/c^{p^{n}}$ for a suitable constant $c>1$, $\forall 1\leq k\leq m, p\geq 1$.
We call
\begin{equation}
\sigma=\prod_{p=1}^{\infty}\sigma_{p}=\prod_{p=1}^{\infty}\prod_{k=1}^{m}\bigwedge_{j=1}^{d_{k,p}}\pi_{k,p}^{\star}(dd^{c}u_{j}^{k,p}+\omega_{FS})
\end{equation}
a probability measure on $\mathbb{P}^{X}$ generated by a family of $(c_{p},\rho)$-functions $\{u_{j}^{k,p}\}$ on $\{\mathbb{P}H_{(2)}^{0}(X, L_{k}^{p})\}$.

Here is our first main theorem.
\begin{theorem}
Let $(X, \omega)$ be a compact K\"{a}hler manifold of dimension $n$,
$(L_{k}, h_{k})$, $1\leq k\leq m\leq n$, be $m$ singular Hermitian holomorphic big line bundles on $X$.
The metric $h_{k}$ is continuous outside a proper analytic subset $A_{k}\subset X$,
$c_{1}(L_{k},h_{k})\geq\epsilon\omega$ on $X$ for some constant $\epsilon>0$,
and $A_{1},...,A_{m}$ are in general position. Let $0<\rho<1$. Then there exists a constant $c>1$ which depends only on $X, L_{k}, \rho$
with the following property: If $\sigma$ is a probability measure on $\mathbb{P}^{X}$ generated by a family of $(1/c^{p^{n}},\rho)$-functions $\{u_{k,p}^{j}\}$ on $\{\mathbb{P}H_{(2)}^{0}(X, L_{k}^{p})\}$ defined by (8), then for almost every $\{S_{p}\}_{p\geq 1}\in\mathbb{P}^{X}$ with respect to $\sigma$,
we have in the weak sense of currents as $p\rightarrow\infty$ on $X$,
\begin{equation*}
\frac{1}{p^{m}}[S_{p}=0]\rightarrow c_{1}(L_{1},h_{1})\wedge...\wedge c_{1}(L_{m},h_{m}).
\end{equation*}
\end{theorem}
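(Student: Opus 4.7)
The first step is to introduce the Fubini--Study currents $\gamma_{k,p}$ on $X$ associated with each Bergman space $H^{0}_{(2)}(X,L_{k}^{p})$, defined via the Kodaira-type map $\Phi_{k,p}\colon X\dashrightarrow \mathbb{P}H^{0}_{(2)}(X,L_{k}^{p})$. Standard Bergman-kernel asymptotics in the singular setting (as used in \cite{cmn}, \cite{dmm}) give the weak convergence $p^{-1}\gamma_{k,p}\to c_{1}(L_{k},h_{k})$ on $X$, and the general-position hypothesis on $A_{1},\dots,A_{m}$ together with the H\"older-with-singularities condition ensures that the wedge products $p^{-m}\gamma_{1,p}\wedge\cdots\wedge\gamma_{m,p}$ are well defined and converge weakly to $c_{1}(L_{1},h_{1})\wedge\cdots\wedge c_{1}(L_{m},h_{m})$. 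The problem therefore reduces to the almost-sure comparison
$$
\frac{1}{p^{m}}[S_{p}=0]-\frac{1}{p^{m}}\gamma_{1,p}\wedge\cdots\wedge\gamma_{m,p}\longrightarrow 0.
$$

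Second, I would realize the zero-set assignment as a meromorphic transform $\Phi_{p}\colon\mathbb{X}_{p}\dashrightarrow X$ in the sense of Dinh--Sibony, using the results of Section~2 of this paper to ensure that $[S_{p1}=0]\wedge\cdots\wedge [S_{pm}=0]$ is well defined off a proper analytic subset of $\mathbb{X}_{p}$. The intermediate degrees of $\Phi_{p}$ are controlled by positive powers of $p$ (Section~3), and an application of the Dinh--Sibony equidistribution theorem yields, for every $\mathscr{C}^{2}$ test form of appropriate bidegree and every $\lambda>0$, a deviation estimate of the form
$$
\sigma_{p}^{0}\Bigl\{S_{p}\in\mathbb{X}_{p}:\bigl|\langle p^{-m}[S_{p}=0]-p^{-m}\gamma_{1,p}\wedge\cdots\wedge\gamma_{m,p},\phi\rangle\bigr|>\lambda\Bigr\}\le C\exp(-\alpha\lambda p),
$$
where $\sigma_{p}^{0}$ is the Fubini--Study product measure. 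The deviation set is cut out by the superlevel set of a quasi-p.s.h. function on $\mathbb{X}_{p}$ whose $dd^{c}$-bound grows at most polynomially in $p$.

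The third step transfers this estimate from $\sigma_{p}^{0}$ to the singular moderate measure $\sigma_{p}$ defined by (7). Because $\sigma_{p}$ is a finite product of measures of the form $\bigwedge_{j}(dd^{c}u_{j}^{k,p}+\omega_{FS})$, the moderateness result of \cite{sh} (Theorem~1.1 and Remark~2.12 cited in the paper) applies provided $c_{p}\le 1/c^{p^{n}}$ for a constant $c>1$ depending only on $X,L_{k},\rho$. This yields an estimate of the form $\int\exp(-\alpha\phi)\,d\sigma_{p}\le c'$ uniformly in $p$ for $\phi$ in a suitably rescaled class $\mathcal{F}$, which upgrades the Fubini--Study deviation bound to
$$
\sigma_{p}\Bigl\{\dots\Bigr\}\le C'\exp(-\alpha'\lambda p).
$$
Choosing $\lambda=\lambda_{p}\to 0$ slowly enough (say $\lambda_{p}\sim 1/\sqrt{p}$) keeps the right-hand side summable in $p$, so Borel--Cantelli applied on the product probability space $\mathbb{P}^{X}$ gives almost-sure convergence against a countable dense family of test forms, hence in the weak sense of currents.

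\textbf{Main obstacle.} The delicate point is Section~3: quantifying precisely how the intermediate degrees of $\Phi_{p}$ on the multi-projective space $\mathbb{X}_{p}$ and the $c_{p}\omega_{FS}$-p.s.h.\ bound for $u_{j}^{k,p}$ combine through the product structure of $\sigma_{p}$, where $\sum_{k}d_{k,p}$ factors appear and each $d_{k,p}$ is of order $p^{n}$. One must show that the cost of each factor is only polynomial in $p$, so that the total cost is absorbed by the exponential threshold $1/c^{p^{n}}$ with a single universal $c$. Once this bookkeeping is done uniformly in $p$, the Borel--Cantelli step closes and the theorem follows.
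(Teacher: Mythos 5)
Your overall architecture matches the paper's: Fubini--Study currents and their convergence (Proposition 4.2), the Dinh--Sibony meromorphic-transform machinery on $\mathbb{X}_p$, moderate-measure estimates, and a Borel--Cantelli argument on $\mathbb{P}^X$. (Two small slips: the transform goes $X\to\mathbb{X}_p$, not the reverse, and Theorem 1.1 does not assume the metrics are H\"older with singularities --- continuity outside $A_k$ plus general position is what makes the wedge products well defined.) The genuine gap is in your third step, which is exactly the part the paper identifies as its hard core. First, structurally: the Dinh--Sibony bound $\mu_p(E_p(\epsilon))\le\Delta(\mathbb{X}_p,\omega_p,\mu_p,\eta_{\epsilon,p})$ intertwines the measure with the deviation set (both $E_p(\epsilon)$, via $\Phi_p^\star(\mu_p)$, and the normalization $\int\varphi\,d\mu_p=0$ in $\Delta$ depend on $\mu_p$), so you cannot simply run it with $\sigma_p^0$ and then remeasure the resulting set with $\sigma_p$; the paper instead applies Theorem 3.2 directly with $\mu_p=\sigma_p$, which forces it to estimate $R(\mathbb{X}_p,\omega_p,\sigma_p)$, $S(\mathbb{X}_p,\omega_p,\sigma_p)$ and $\Delta(\mathbb{X}_p,\omega_p,\sigma_p,t)$ from scratch, and to insert an extra comparison $\Phi_p^\star(\sigma_p)$ versus $\Phi_p^\star(\omega_p^{d_{0,p}})=\gamma_{1,p}\wedge\cdots\wedge\gamma_{m,p}$ (Proposition 4.3).

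Second, and more seriously, your claimed uniform bound $\int\exp(-\alpha\phi)\,d\sigma_p\le c'$ with a usable $\alpha$ is precisely what is not available off the shelf. The moderateness constant inherited from \cite{sh} for $\sigma_N=\bigwedge_j(dd^cu_j+\omega_{FS})$ on $\mathbb{P}^N$ is $(\beta_0N,\alpha_0(\rho/4)^N)$ with $N=d_{k,p}\sim p^n$, so the exponent degenerates \emph{exponentially} in $p^n$; a direct Chebyshev estimate then gives $\sigma_N(\phi<-t)\lesssim N\exp(-\alpha_0(\rho/4)^Nt)$, which is vacuous for the polynomially large thresholds $t\sim\eta_{\epsilon,p}$ you need. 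The paper's resolution (Propositions 3.10--3.14) is the decomposition $\sigma_N\le\omega_{FS}^N+\mu_{2,N}$, where the perturbation $\mu_{2,N}$ carries the bad exponent but has total contribution of size $(\rho/4)^N$; this yields the two-term bound $\Delta\le\beta_0N\exp(-\alpha_0t)+\beta_1(\rho/4)^N\exp(-\alpha_0(\rho/4)^Nt)$, hence a dimension-free decay rate $\beta_3N\exp(-\alpha_0t)$ in the range $t\le\min_k d_{k,p}$, and this is then propagated through the $m$ factors of the multi-projective space by an induction with the slicing argument of Proposition 3.13. You correctly name this bookkeeping as the main obstacle, but you do not supply the idea (the two-term splitting and the restricted range of $t$) that makes it work, so as written the Borel--Cantelli step cannot close.
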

\begin{remark}
When all $u_{k,p}^{j}\equiv 0$, then $\sigma_{p}$ are the Lebesgue measures $\sigma_{p}^{0}$
on $\mathbb{X}_{p}$ and we obtain \cite[Theorem 1.2]{cmn}.
In addition, the constant $c$ is independent of the choices of singular metrics on the big line bundles.
\end{remark}
When the metrics $h_{k}$ are all H\"{o}lder with singularities,
we can also extend the result in \cite{cmn} about the estimate of the speed of the above convergence
associated to the moderate measures defined by (7). Now we are able to state the second main theorem.
\begin{theorem}
We keep the notations and the hypotheses of Theorem 1.1. Suppose, moreover,
that $h_{k}$ is H\"{o}lder with singularities along $A_{k}$, $1\leq k\leq m$.
Then there exist a positive constant $\xi$ which depends only on $m$, and another positive constant $C$ which depends on
$X, (L_{1},h_{1}),...,(L_{m},h_{m})$ with the following property:
Given any sequence of positive numbers $\{\lambda_{p}\}_{p=1}^{\infty}$ with the following conditions
\begin{equation*}
\liminf_{p\rightarrow\infty}\frac{\lambda_{p}}{\log p}>(1+\xi n)C \quad \text{and} \quad
\lim_{p\rightarrow\infty}\frac{\lambda_{p}}{p^{n}}=0,
\end{equation*}
there exist subsets $E_{p}\subset\mathbb{X}_{p}$ such that for all $p$ sufficiently large,
\par (i)
\begin{equation*}
\sigma_{p}(E_{p})\leq Cp^{\xi n}\exp(-\frac{\lambda_{p}}{C}),
\end{equation*}
\par (ii) for any point $S_{p}\in\mathbb{X}_{p}\setminus E_{p}$ and any $(n-m,n-m)$-form $\phi$ of class $\mathscr{C}^{2}$,
\begin{equation*}
\bigl|\bigl<\frac{1}{p^{m}}[S_{p}=0]-\bigwedge_{k=1}^{m}c_{1}(L_{k},h_{k}), \phi \bigr> \bigr|\leq \frac{C\lambda_{p}}{p}\|\phi\|_{\mathscr{C}^{2}}.
\end{equation*}
\end{theorem}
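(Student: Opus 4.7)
The plan is to adapt the meromorphic transform strategy of Coman--Marinescu--Nguy\^en \cite{cmn} to the moderate measure setting, using the moderate measure estimates of \cite{sh} in place of their Fubini--Study computations. Write $\gamma_{k,p}$ for the normalized Fubini--Study current on $X$ associated with the Bergman space $H^{0}_{(2)}(X,L_{k}^{p})$, and decompose
\[
\tfrac{1}{p^{m}}[S_{p}=0]-\bigwedge_{k=1}^{m}c_{1}(L_{k},h_{k})=\Bigl(\tfrac{1}{p^{m}}[S_{p}=0]-\bigwedge_{k=1}^{m}\tfrac{1}{p}\gamma_{k,p}\Bigr)+\Bigl(\bigwedge_{k=1}^{m}\tfrac{1}{p}\gamma_{k,p}-\bigwedge_{k=1}^{m}c_{1}(L_{k},h_{k})\Bigr).
\]
The ``deterministic'' second summand is handled directly by the H\"older--singularity Bergman kernel estimates of \cite{cmn}: under our assumption that $h_{k}$ is H\"older with singularities along $A_{k}$, one has $\|\tfrac{1}{p}\gamma_{k,p}-c_{1}(L_{k},h_{k})\|_{(\mathscr{C}^{2})^{\star}}=O(\log p/p)$, and the general position of the $A_{k}$ makes the wedge products well defined and allows iteration, producing an $O(\log p/p)\,\|\phi\|_{\mathscr{C}^{2}}$ contribution to (ii); the hypothesis $\liminf\lambda_{p}/\log p>(1+\xi n)C$ absorbs this into $C\lambda_{p}/p$.

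For the ``stochastic'' first summand, I would encode the map $S_{p}\mapsto[S_{p}=0]$ as a meromorphic transform $F_{p}$ with source $\mathbb{X}_{p}$ and target $X$, and apply the Dinh--Sibony inequality. For a fixed smooth test form $\phi$, this produces a quasi-plurisubharmonic function $\Psi_{p,\phi}$ on $\mathbb{X}_{p}$, whose $\omega_{FS}$-p.s.h. constant is controlled by the intermediate degrees of $F_{p}$ (which are polynomial in $p$), satisfying the pointwise bound
\[
\Bigl|\bigl\langle\tfrac{1}{p^{m}}[S_{p}=0]-\bigwedge_{k=1}^{m}\tfrac{1}{p}\gamma_{k,p},\,\phi\bigr\rangle\Bigr|\leq \frac{C}{p}\|\phi\|_{\mathscr{C}^{2}}\bigl(1+|\Psi_{p,\phi}(S_{p})|\bigr).
\]
Taking $E_{p}$ to be the union, over a countable dense family of test forms $\phi$, of the super-level sets $\{|\Psi_{p,\phi}|>\lambda_{p}\}$, both (i) and (ii) reduce to estimating $\sigma_{p}(\{|\Psi_{p,\phi}|>\lambda_{p}\})$.

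This is where the moderate property of $\sigma_{p}$ becomes decisive. After rescaling $\Psi_{p,\phi}$ by its q.p.s.h. constant and normalizing, the rescaled function lies (up to an additive constant) in the class $\mathcal{F}$ of (4) on $\mathbb{X}_{p}$. The defining inequality (5) combined with an exponential Chebyshev argument then gives
\[
\sigma_{p}\bigl(\{|\Psi_{p,\phi}|>\lambda_{p}\}\bigr)\leq Cp^{\xi n}\exp(-\lambda_{p}/C),
\]
where the point is that $\sigma_{p}=\prod_{k}\bigwedge_{j}\pi_{k,p}^{\star}(dd^{c}u_{j}^{k,p}+\omega_{FS})$ is moderate on $\mathbb{X}_{p}$ with constants $(c,\alpha)$ uniform in $p$, by \cite[Theorem 1.1, Remark 2.12]{sh} (thanks to $c_{p}\leq 1/c^{p^{n}}$); the prefactor $p^{\xi n}$ with $\xi$ depending only on $m$ encodes the rescaling by the intermediate degrees of $F_{p}$ together with the contribution of the $m$ projective factors. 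This proves (i), and combining with the deterministic estimate on $\mathbb{X}_{p}\setminus E_{p}$ yields (ii).

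The main obstacle is the bookkeeping of constants in the last step: one must track how the intermediate degrees of $F_{p}$ rescale $\Psi_{p,\phi}$ and extract a \emph{polynomial} prefactor $p^{\xi n}$ in (i) rather than an exponential one. This in turn relies crucially on the uniformity of the moderate constants of each factor $\bigwedge_{j=1}^{d_{k,p}}(dd^{c}u_{j}^{k,p}+\omega_{FS})$ as the dimension $d_{k,p}\sim p^{n}$ of the ambient projective space grows; the sharp smallness condition $c_{p}\leq 1/c^{p^{n}}$ is precisely what makes \cite[Theorem 1.1]{sh} yield such uniformity. This is exactly the ``efficient estimation of constants associated with multi-projective spaces'' announced in the introduction.
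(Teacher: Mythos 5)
Your overall architecture is the same as the paper's: the deterministic piece $\bigwedge_{k}\frac{1}{p}\gamma_{k,p}-\bigwedge_{k}c_{1}(L_{k},h_{k})$ is exactly Theorem 4.6 (i.e.\ \cite[Proposition 5.1]{cmn}, where the H\"older singularity hypothesis is used), and your ``stochastic'' piece is what the paper splits into $\frac{1}{p^{m}}([S_{p}=0]-\Phi_{p}^{\star}(\sigma_{p}))$ (Theorem 4.4, via the Dinh--Sibony $\Delta$-estimate of Theorem 3.2) plus $\frac{1}{p^{m}}(\Phi_{p}^{\star}(\sigma_{p})-\Phi_{p}^{\star}(\omega_{p}^{d_{0,p}}))$ (Proposition 4.3, via the $S$-estimate of Theorem 3.3), using $\Phi_{p}^{\star}(\omega_{p}^{d_{0,p}})=\gamma_{1,p}\wedge\cdots\wedge\gamma_{m,p}$. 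So the decomposition and the use of the meromorphic-transform machinery are essentially the paper's.

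There is, however, one claim in your final step that is wrong and that hides the actual technical content. The factors $\bigwedge_{j=1}^{d_{k,p}}(dd^{c}u_{j}^{k,p}+\omega_{FS})$ are \emph{not} $(c,\alpha)$-moderate with constants uniform in $p$: by \cite{sh} (Proposition 3.10 here) they are $(\beta_{0}N,\alpha_{0}(\rho/4)^{N})$-moderate on $\mathbb{P}^{N}$ with $N=d_{k,p}\sim p^{n}$, so the exponent $\alpha$ degenerates doubly exponentially in $p$. A Chebyshev argument run with these constants as you describe would give a bound of the shape $\beta_{0}N\exp(-\alpha_{0}(\rho/4)^{N}\lambda_{p})$, which is useless. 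What actually saves the day (Propositions 3.11, 3.13, 3.14) is a two-term sublevel-set estimate
\begin{equation*}
\Delta(\mathbb{P}^{N},\omega_{FS},\sigma_{N},t)\leq \beta_{0}N\exp(-\alpha_{0}t)+\beta_{1}(\tfrac{\rho}{4})^{N}\exp(-\alpha_{0}(\tfrac{\rho}{4})^{N}t),
\end{equation*}
where the degenerate term is harmless only because of its tiny prefactor and only in the range $0\leq t\lesssim N$; propagating this through the $m$ projective factors yields $\Delta\leq\beta_{6}\ell^{\xi}\exp(-\beta t/r)$ for $0\leq t\leq\min_{k}d_{k,p}$. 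This restricted range is precisely where the hypothesis $\lambda_{p}/p^{n}\to 0$ enters: it guarantees the threshold $\eta_{p}\asymp\lambda_{p}$ stays below $p^{n}/\tilde C$ so that the good regime applies. Your proposal neither derives this two-term estimate nor explains where the hypothesis $\lambda_{p}=o(p^{n})$ is used, and the assertion of $p$-uniform moderate constants, if taken at face value, would make that hypothesis unnecessary. This is the gap to fill.
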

\begin{remark}
The primary difference between Theorem 1.3 and \cite[Theorem 1.4]{cmn} is that the measures in Theorem 1.3 are moderate.
In addition, the convergence speed in Theorem 1.3 is of order $O(\log p/p)$ when $\lambda_{p}=C_{1}\log p$ for some constant $C_{1}>0$.
So the result generalizes \cite[Theorem 1.2]{dmm} and \cite[Theorem 1.2]{sh}.
\end{remark}

Note that in Theorem 1.1 and all other equidistribution theorems of the papers listed in this section, the limit point of
the convergence sequences can not be any positive closed current.
For example, there is a condition that $c_{1}(L_{k},h_{k})\geq \epsilon\omega$ in Theorem 1.1.
However, it is possible when $X=\mathbb{P}^{n}, L=\mathcal{O}(1), m=1$ with a probability measure $\sigma$ chosen properly.
\begin{theorem}
Given any positive closed current $T$ of bidegree $(1,1)$ with mass $1$ on $\mathbb{P}^{n}$.
Let $d_{p}=\dim H^{0}(\mathbb{P}^{n},\mathcal{O}(p))-1$.
Then there exists a family of smooth probability measures $\sigma_{p}=(\omega_{FS}+dd^{c}u_{p})^{d_{p}}$
on $\mathbb{P}H^{0}(\mathbb{P}^{n},\mathcal{O}(p))$ for some smooth real functions $u_{p}$ with the following property:
For almost every $S=(S_{p})\in\prod_{p\geq 1}\mathbb{P}H^{0}(\mathbb{P}^{n},\mathcal{O}(p))$
with respect to $\sigma=\prod_{p\geq 1}\sigma_{p}$, we have in the weak sense of currents as $p\rightarrow\infty$ on $\mathbb{P}^{n}$,
\begin{equation*}
\frac{1}{p}[S_{p}=0]\rightarrow T.
\end{equation*}
\end{theorem}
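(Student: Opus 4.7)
The plan is to approximate $T$ by the normalized divisor $\tfrac{1}{p}[F_p=0]$ of a single chosen section $F_p\in H^0(\mathbb{P}^n,\mathcal{O}(p))$, and then to arrange $\sigma_p$ to be a smooth probability measure almost entirely supported in a tiny neighborhood of $[F_p]$ inside $\mathbb{P}H^0(\mathbb{P}^n,\mathcal{O}(p))$.

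First I would write $T=\omega_{FS}+dd^c\psi$ for some $\omega_{FS}$-p.s.h.\ function $\psi$ on $\mathbb{P}^n$, and invoke Demailly's analytic approximation theorem (valid for the big line bundle $(\mathcal{O}(1),h_{FS})$) to produce non-zero $F_p\in H^0(\mathbb{P}^n,\mathcal{O}(p))$ with
$$\tfrac{1}{p}\log|F_p|_{h_{FS}^{\otimes p}}\longrightarrow\psi\quad\text{in }L^1(\mathbb{P}^n,\omega_{FS}^n);$$
applying $dd^c$ and the Lelong--Poincar\'e formula gives $\tfrac{1}{p}[F_p=0]\to T$ weakly. For each fixed $p$, the map $\mathbb{P}H^0(\mathbb{P}^n,\mathcal{O}(p))\ni[S]\mapsto[S=0]$ into the space of positive closed $(1,1)$-currents on $\mathbb{P}^n$ is continuous in the weak topology, so I can pick an open neighborhood $U_p$ of $[F_p]$ with
$$[S]\in U_p\Longrightarrow\bigl|\langle\tfrac{1}{p}([S=0]-[F_p=0]),\phi\rangle\bigr|<2^{-p}\,\|\phi\|_{\mathscr{C}^2}$$
for every $(n-1,n-1)$-test form $\phi$ of class $\mathscr{C}^2$.

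Next I would choose a smooth probability measure $\tilde\mu_p$ on $\mathbb{P}H^0(\mathbb{P}^n,\mathcal{O}(p))$ supported in $U_p$ (say, a smoothed bump centred at $[F_p]$) and set
$$\mu_p:=(1-2^{-p})\tilde\mu_p+2^{-p}\omega_{FS}^{d_p},$$
a smooth strictly positive probability measure with total mass $1=\int\omega_{FS}^{d_p}$ and $\mu_p\bigl(\mathbb{P}H^0(\mathbb{P}^n,\mathcal{O}(p))\setminus U_p\bigr)\leq 2^{-p}$. Yau's solution of the complex Monge--Amp\`ere equation on the compact K\"ahler manifold $(\mathbb{P}H^0(\mathbb{P}^n,\mathcal{O}(p)),\omega_{FS})$ then produces a smooth real function $u_p$ with $(\omega_{FS}+dd^cu_p)^{d_p}=\mu_p$, and I set $\sigma_p:=(\omega_{FS}+dd^cu_p)^{d_p}$, which is of the form required by the statement.

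Finally, because $\sum_p\sigma_p(\mathbb{P}H^0(\mathbb{P}^n,\mathcal{O}(p))\setminus U_p)\leq\sum_p 2^{-p}<\infty$, the Borel--Cantelli lemma applied to $\sigma=\prod_p\sigma_p$ gives that, for $\sigma$-a.e.\ sequence $(S_p)$, one has $[S_p]\in U_p$ for all sufficiently large $p$. Combined with the defining property of $U_p$ and the weak convergence $\tfrac{1}{p}[F_p=0]\to T$, this yields $\tfrac{1}{p}[S_p=0]\to T$ weakly on $\mathbb{P}^n$. The hardest step will be the first one: producing an individual section $F_p$ (rather than a tuple of sections) with $\tfrac{1}{p}[F_p=0]$ approaching $T$ weakly; this relies on density of normalized log-moduli of homogeneous polynomials of degree $\leq p$ in the Lelong class, which is the $\mathbb{P}^n$-incarnation of Demailly's approximation. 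A minor but essential subtlety is that $\mu_p$ must be smooth and strictly positive for Yau's theorem to deliver a smooth $u_p$, which motivates the small convex combination with the smooth background $\omega_{FS}^{d_p}$.
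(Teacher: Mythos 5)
Your proposal is correct and follows essentially the same route as the paper: produce a deterministic sequence $F_p$ with $\tfrac{1}{p}[F_p=0]\to T$ via approximation of the potential by normalized logarithms of polynomials (the paper's Proposition 4.8, via H\"ormander's density theorem), concentrate smooth measures near $[F_p]$ using continuity of $[S]\mapsto[S=0]$, realize them as Monge--Amp\`ere measures by Yau's theorem, and conclude by Borel--Cantelli. Your extra convex combination with $\omega_{FS}^{d_p}$ to guarantee a strictly positive density before invoking Yau is a sensible refinement of a point the paper glosses over, but it does not change the argument.
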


The paper is organized as follows. In Section 2 we recall the notion of Fubini-Study currents and prove a Bertini type theorem
associated to moderate measures. In Section 3 we first introduce several results of meromorphic transforms induced by Kodaira maps.
This enables us to estimate some constants
intimately associated to moderate measures.
Consequently, we establish relevant estimates on multi-projective spaces.
Section 4 is devoted to prove the main theorems.

\section{Intersection of Fubini-Study currents and Bertini theorem for moderate measures}
In this section, we introduce some results about
the intersection of the Fubini-Study currents associated to $m$ line bundles.
We will see that the current $c_{1}(L_{1},h_{1})\wedge...\wedge c_{1}(L_{m},h_{m})$ in Theorem 1.1 is well-defined.
Based on the elementary techniques in \cite{cmn}, we also show that for almost all
the zero-divisors of sections of high powers of the bundles with respect to
a moderate measure are in general position. Then it follows from \cite{de} that
the currents $[S_{p}=0]$ in Theorem 1.1 are well-defined for almost all $S_{p}$ with respect to moderate measures $\sigma$ in Theorem 1.1.

We keep the notations and hypotheses in Section 1.
Consider the $Kodaira$ $map$
\begin{equation*}
\Phi_{k,p}: X\rightarrow\mathbb{P}(H_{(2)}^{0}(X, L_{k}^{p})^{\star}).
\end{equation*}
Here $H_{(2)}^{0}(X, L_{k}^{p})^{\star}$ is the dual space of $H_{(2)}^{0}(X, L_{k}^{p})$.
Choose $\{S_{k,p}^{j}\}_{j=0}^{d_{k,p}}$ as an orthonormal basis of $H_{(2)}^{0}(X, L_{k}^{p})$.
By an identification via the basis, it boils down to a meromorphic map
\begin{equation*}
\Phi_{k,p}: X\rightarrow \mathbb{P}^{d_{k,p}}.
\end{equation*}
Now we give a local analytic description of the above map.
Let $U\subset X$ be a contractible Stein open subset, $e_{k}$ be a local holomorphic frame of $L_{k}$ on $U$.
Then there exists a holomorphic function $s_{j}^{k,p}$ on $U$ such that $S_{k,p}^{j}=s_{j}^{k,p}e_{k}^{\otimes p}$.
Then the map is expressed locally as
\begin{equation}
\Phi_{k,p}(x)=[s_{0}^{k,p}(x):...:s_{d_{k,p}}^{k,p}(x)], \quad \forall x\in U
\end{equation}
It is called the Kodaira map defined by the basis $\{S_{k,p}^{j}\}_{j=0}^{d_{k,p}}$.
Denote by $P_{k,p}$ the Bergman kernel function defined by
\begin{equation}
P_{k,p}(x)=\sum_{j=0}^{d_{k,p}}|S_{k,p}^{j}(x)|^{2}_{h_{k,p}}, \quad |S_{k,p}^{j}(x)|^{2}_{h_{k,p}}=h_{k,p}(S_{k,p}^{j}(x),S_{k,p}^{j}(x)).
\end{equation}
It is easy to see that this definition is independent of the choice of basis.

Recall that $\omega_{FS}$ is the normalized Fubini-Study form on $\mathbb{P}^{d_{k,p}}$. We define the Fubini-Study
currents $\gamma_{k,p}$ of $H_{(2)}^{0}(X, L_{k}^{p})$ as pullbacks of $\omega_{FS}$ by Kodaira map,
\begin{equation}
\gamma_{k,p}=\Phi_{k,p}^{\star}(\omega_{FS}).
\end{equation}
We have in the local Stein open subset $U$,
\begin{equation*}
\gamma_{k,p}\bigl|_{U}=\frac{1}{2}dd^{c}\log\sum_{j=0}^{d_{k,p}}|s_{j}^{k,p}|^{2}.
\end{equation*}
This yields
\begin{equation*}
\frac{1}{p}\gamma_{k,p}=c_{1}(L_{k},h_{k})+\frac{1}{2p}dd^{c}\log P_{k,p}.
\end{equation*}
Since $\log P_{k,p}$ is a global function which belongs to $L^{1}(X, \omega^{n})$,
$\frac{1}{p}\gamma_{k,p}$ has the same cohomology class as $c_{1}(L_{k},h_{k})$.
The base locus of $H_{(2)}^{0}(X, L_{k}^{p})$ is denoted by
\begin{equation}
A_{k,p}=\{x\in X: S_{k,p}^{j}=0,  \forall 0\leq j\leq d_{k,p}\}, \quad 1\leq k\leq m.
\end{equation}
We have the following result \cite[Proposition 3.1]{cmn}.
\begin{proposition}
We keep the notations and hypotheses of Theorem 1.1, then
\par (i) For every $J\subset\{1,...,m\}, J'=\{1,...,m\}\setminus J$, the analytic subsets $A_{k,p}$ and $A_{l}$, for $k\in J, l\in J'$,
are in general position, when $p$ is sufficiently large.
\par (ii) The currents
\begin{equation*}
\bigwedge_{k\in J}\gamma_{k,p}\wedge\bigwedge_{l\in J'}c_{1}(L_{l},h_{l})
\end{equation*}
are well defined for every $J\subset\{1,...,m\}$, when $p$ is sufficiently large.
\end{proposition}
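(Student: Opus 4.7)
My plan is to reduce (i) to the inclusion $A_{k,p}\subset A_{k}$ for every $k$ and every $p$ sufficiently large, and then derive (ii) from (i) via Demailly's intersection theory for positive closed $(1,1)$-currents.

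For part (i), the main task is to prove that for each $k$ and each $p$ large enough, every point $x\in X\setminus A_{k}$ admits a global $L^{2}$-holomorphic section $S\in H^{0}_{(2)}(X,L_{k}^{p})$ with $S(x)\neq 0$. Since $c_{1}(L_{k},h_{k})\geq\epsilon\omega$ and $h_{k}$ is continuous off $A_{k}$, one can apply H\"ormander's $L^{2}$-estimates for $\dbar$ with weight induced by $h_{k,p}$ locally perturbed by a strictly plurisubharmonic bump centered at $x$: solving $\dbar$ on a suitable smooth extension of a local peak section near $x$ and correcting it globally produces an element of $H^{0}_{(2)}(X,L_{k}^{p})$ that does not vanish at $x$. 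This is the standard Bouche--Tian--Zelditch peak-section construction, carried out in the singular setting by Coman--Marinescu using Demailly's regularization of the weight and the strict positivity of $c_{1}(L_{k},h_{k})$. Once $A_{k,p}\subset A_{k}$ for $p\geq p_{0}$, the general position statement is automatic: for any $J\subset\{1,\dots,m\}$, $J'=\{1,\dots,m\}\setminus J$ and any choice of indices,
\begin{equation*}
\bigcap_{k\in J}A_{k,p}\cap\bigcap_{l\in J'}A_{l}\subset\bigcap_{k\in J}A_{k}\cap\bigcap_{l\in J'}A_{l},
\end{equation*}
and the right-hand side has codimension at least $|J|+|J'|$ by the general-position hypothesis on $A_{1},\dots,A_{m}$.

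For part (ii), each current $\gamma_{k,p}=\Phi_{k,p}^{\star}(\omega_{FS})$ is positive closed of bidegree $(1,1)$, and in a contractible Stein chart $U\subset X$ it is written as $\ddc v_{k,p}$ with $v_{k,p}=\tfrac{1}{2}\log\sum_{j}|s_{j}^{k,p}|^{2}$; this local potential is upper semicontinuous and bounded above, with unbounded (i.e.\ $-\infty$) locus exactly $A_{k,p}\cap U$. Likewise, $c_{1}(L_{l},h_{l})=\ddc\varphi_{l}$ locally, and by hypothesis $\varphi_{l}$ is continuous, hence locally bounded, on $X\setminus A_{l}$. Part (i) tells us that the analytic sets $\{A_{k,p}:k\in J\}\cup\{A_{l}:l\in J'\}$ are in general position, so Demailly's monotone construction of wedge products of positive closed $(1,1)$-currents with locally bounded potentials off an analytic set in general position applies inductively and produces the well-defined current $\bigwedge_{k\in J}\gamma_{k,p}\wedge\bigwedge_{l\in J'}c_{1}(L_{l},h_{l})$.

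The main obstacle is the inclusion $A_{k,p}\subset A_{k}$: controlling the $L^{2}$-norm of the constructed peak section is delicate because the weight $h_{k}$ is only singular p.s.h.\ along $A_{k}$, and one must regularize it near $x\in X\setminus A_{k}$ while preserving the strict positivity $c_{1}(L_{k},h_{k})\geq\epsilon\omega$ in order to invoke H\"ormander's theorem. Once this technical step is in place, parts (i) and (ii) follow rapidly from set-theoretic general position and Demailly's intersection theory.
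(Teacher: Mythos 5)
Your argument is sound and is essentially the argument behind the result you are asked to prove: the paper itself offers no proof of this proposition, citing it directly as \cite[Proposition 3.1]{cmn}, and your reconstruction follows the same route as that source. The key reduction to the inclusion $A_{k,p}\subset A_{k}$ for $p$ large (obtained via $L^{2}$/peak-section techniques of Coman--Marinescu, using $c_{1}(L_{k},h_{k})\geq\epsilon\omega$ and the continuity of $h_{k}$ off $A_{k}$), followed by the set-theoretic general-position argument and Demailly's intersection theory \cite{de} for $(1,1)$-currents with local psh potentials locally bounded outside analytic sets in general position, is exactly what is needed, and you correctly identify the $L^{2}$ step as the only nontrivial point.
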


\begin{lemma}
Let $\sigma$ be a moderate measure on $\mathbb{P}^{N}$. Then every proper analytic subset of $\mathbb{P}^{N}$
has measure zero with respect to $\sigma$.
\end{lemma}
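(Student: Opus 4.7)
The plan is to produce, for any proper analytic subset $A\subset\mathbb{P}^N$, a function $\phi\in\mathcal{F}$ with $\phi\equiv-\infty$ on $A$, and then invoke the moderate hypothesis to conclude that $\sigma(A)=0$.

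First I would reduce to the case when $A$ is a hypersurface. Since $A$ is proper analytic, it is contained in a finite union of irreducible hypersurfaces in $\mathbb{P}^N$; multiplying their defining homogeneous polynomials, I obtain a single nonzero homogeneous polynomial $f\in\mathbb{C}[z_0,\ldots,z_N]$ of some degree $d\geq 1$ with $A\subset\{f=0\}$.

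Next I would construct the required quasi-plurisubharmonic function. The expression
\begin{equation*}
u([z]):=\frac{1}{d}\log|f(z)|-\log\|z\|
\end{equation*}
is well defined on $\mathbb{P}^N$ because $f$ is homogeneous of degree $d$, and by the Poincar\'e--Lelong formula one has $dd^c u=\frac{1}{d}[f=0]-\omega_{FS}$ in the sense of currents, so $dd^c u+\omega_{FS}\geq 0$; that is, $u$ is $\omega_{FS}$-p.s.h. Since $u$ is upper-semicontinuous and not identically $-\infty$, the constant $M:=\max_{\mathbb{P}^N}u$ is finite, and $\phi:=u-M$ then belongs to the class $\mathcal{F}$ associated to $(\mathbb{P}^N,\omega_{FS})$. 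By construction $\phi\equiv-\infty$ on $\{f=0\}$, hence in particular on $A$.

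Finally, since $\sigma$ is moderate, there exist $c>0,\alpha>0$ such that $\int_{\mathbb{P}^N}e^{-\alpha\phi}\,d\sigma\leq c<\infty$. The integrand equals $+\infty$ on $\{\phi=-\infty\}$, so this set must have $\sigma$-measure zero; thus $\sigma(A)\leq\sigma(\{\phi=-\infty\})=0$. There is no real obstacle here apart from verifying the normalization: one needs the Poincar\'e--Lelong computation to yield exactly $dd^c u\geq-\omega_{FS}$ with the same $\omega_{FS}$ appearing in the definition of $\mathcal{F}$, which is the reason for dividing $\log|f|$ by $d$ and subtracting $\log\|z\|$.
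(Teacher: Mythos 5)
Your proof is correct and follows essentially the same approach as the paper: produce a function $\phi\in\mathcal{F}$ with a logarithmic pole ($\phi\equiv-\infty$) along the set and conclude from $\int e^{-\alpha\phi}\,d\sigma<\infty$ that the polar set is $\sigma$-null. In fact your treatment of the reduction is slightly more complete than the paper's, which invokes ``homogeneity'' to reduce to the coordinate hyperplane $\{z_0=0\}$ and uses $\phi=\log(|z_0|/|z|)$, whereas you cover an arbitrary proper analytic subset by enclosing it in a degree-$d$ hypersurface $\{f=0\}$ and using the correctly normalized potential $\frac{1}{d}\log|f|-\log\|z\|$.
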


\begin{proof}
By the homogeneity of $\mathbb{P}^{N}$, it is sufficient to prove that
\begin{equation*}
\sigma([z_{0}])=0
\end{equation*}
for some homogeneous coordinate $[z_{0},...,z_{N}]$.
By the definition of a moderate measure (cf. (5)), there exist constants $c>0, \alpha>0$ such that
\begin{equation*}
\int_{\mathbb{P}^{N}}\exp(-\alpha\phi)d\sigma\leq c,
\end{equation*}
$\forall \phi\in\mathcal{F}$, where $\mathcal{F}$ is defined in (1).
It follows that
\begin{equation*}
\int_{\mathbb{P}^{N}}|\phi|d\sigma<\infty.
\end{equation*}
Let $\phi=\log\frac{|z_{0}|}{|z|}$, where $|z|^{2}=\sum_{j=0}^{N}|z_{j}|^{2}$.
This function is well defined on $\mathbb{P}^{N}$. Note that $\phi$ is $\omega_{FS}$-p.s.h., $\max_{z\in\mathbb{P}^{N}}\phi(z)=0$.
Then $\phi\in\mathcal{F}$. So we have that
\begin{equation*}
\int_{[z_{0}=0]}\bigl |\log\frac{|z_{0}|}{|z|}\bigr |d\sigma<\infty.
\end{equation*}
Hence $\sigma([z_{0}])=0$.
The proof is completed.
\end{proof}

Adapting the proof of \cite[Proposition 3.2]{cmn}, we obtain the following Bertini type theorem in the context of moderate measures.
\begin{proposition}
Let $X$ be a compact complex manifold of dimension $n$. Let $L_{k}, 1\leq k\leq m\leq n$, be $m$ holomorphic line bundles
on $X$. If
\par (i) $V_{k}$ is a vector subspace of $H^{0}(X, L_{k})$ with basis $S_{k,0},...,S_{k,d_{k}}$, the base loci
$Bs V_{1},...,Bs V_{m}$ are in general position, where $Bs V_{k}:=\{x\in X: S_{k,0}(x)=...=S_{k,d_{k}}(x)=0\}$.
\par (ii) For each $t_{k}=[t_{k,0}:...:t_{k,d_{k}}]\in\mathbb{P}^{d_{k}}$, we set
\begin{equation}
Z(t_{k}):=\{x\in X: \sum_{j=0}^{d_{k}}t_{k,j}S_{k,j}(x)=0\}.
\end{equation}
\par (iii) $\sigma=\sigma_{1}\times...\times\sigma_{m}$ is the product measure on the multi-projective space
$\mathbb{P}^{d_{1}}\times...\times\mathbb{P}^{d_{m}}$, where $\sigma_{k}$ is a probability moderate measure on $\mathbb{P}^{d_{k}}$.
\par Then the analytic subsets $Z(t_{1}),...,Z(t_{m})$ are in general position for almost all $(t_{1},...,t_{m})\in\mathbb{P}^{d_{1}}\times...\times\mathbb{P}^{d_{m}}$ with respect to $\sigma$.
\end{proposition}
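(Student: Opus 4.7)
The plan is to adapt the inductive scheme of \cite[Proposition 3.2]{cmn}, substituting Lemma 2.2 at each point where that argument invoked the fact that proper algebraic subsets of a projective space have Lebesgue measure zero. I will proceed by induction on $m$. For $m=1$ the claim reduces to showing that the linear subspace $H_1 \subset \mathbb{P}^{d_1}$ of those $t_1$ with $\sum_j t_{1,j}S_{1,j}\equiv 0$ on $X$ has $\sigma_1$-measure zero, which is immediate from Lemma 2.2 applied to $\mathbb{P}^{d_1}$.

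For the inductive step I assume the statement for $m-1$ and write $\tau := (t_1,\dots,t_{m-1})$. By Fubini applied to the product measure $\sigma_1 \times \cdots \times \sigma_{m-1}$, the set $G \subset \prod_{k=1}^{m-1}\mathbb{P}^{d_k}$ for which $Z(t_1),\dots,Z(t_{m-1})$ are in general position has full measure. Fix any $\tau \in G$ and set
\[
W_\tau := \{t_m \in \mathbb{P}^{d_m}:\ Z(t_1),\dots,Z(t_m)\text{ are not in general position}\}.
\]
By Fubini it will be enough to show that $W_\tau$ is a proper analytic subset of $\mathbb{P}^{d_m}$, for then Lemma 2.2 yields $\sigma_m(W_\tau)=0$ and hence the conclusion.

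To describe $W_\tau$, note that since $\tau \in G$, the failure of general position after adjoining $Z(t_m)$ can only occur through some subset $\{i_1<\cdots<i_k\}\subset\{1,\dots,m-1\}$ and an irreducible component $Y$ of the intersection $Z(t_{i_1}) \cap \cdots \cap Z(t_{i_k})$ of codimension exactly $k$ with $Y \subset Z(t_m)$. For each such $Y$, the condition $Y \subset Z(t_m)$ is equivalent to the vanishing on $Y$ of $\sum_j t_{m,j} S_{m,j}$, which is a linear condition on $t_m$ and thus cuts out a linear subspace $L(Y) \subset \mathbb{P}^{d_m}$. The hypothesis that the base loci $\mathrm{Bs}\,V_1,\dots,\mathrm{Bs}\,V_m$ are in general position forces $Y \not\subset \mathrm{Bs}\,V_m$, so at least one $S_{m,j}$ is not identically zero on $Y$; therefore $L(Y)$ is proper. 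Since $\tau$ is fixed, only finitely many such components $Y$ arise, so $W_\tau$ is a finite union of proper linear subspaces of $\mathbb{P}^{d_m}$, and in particular a proper analytic subset.

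The main delicate step is precisely the verification that no component $Y$ occurring in the union is entirely contained in $\mathrm{Bs}\,V_m$; this is where the hypothesis on the base loci is essential, since without it one could not guarantee $L(Y)$ proper and Lemma 2.2 would become inapplicable. Once this step is in place, combining Lemma 2.2 with the two Fubini reductions completes the induction.
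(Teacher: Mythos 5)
Your overall skeleton (induction, Fubini, Lemma 2.2 in place of ``proper algebraic sets are Lebesgue-null'') matches the paper's, but there is a genuine gap at what you yourself call the main delicate step. You claim that the general position of $\mathrm{Bs}\,V_{1},\dots,\mathrm{Bs}\,V_{m}$ ``forces $Y\not\subset \mathrm{Bs}\,V_{m}$'' for every codimension-$k$ component $Y$ of $Z(t_{i_{1}})\cap\dots\cap Z(t_{i_{k}})$. This does not follow: $Y$ is a component of an intersection of the divisors $Z(t_{i_{j}})$, not of the base loci, and the hypothesis only controls intersections of the base loci among themselves. What you actually need is that, for almost every $\tau$, one has $\dim\bigl(Z(t_{i_{1}})\cap\dots\cap Z(t_{i_{k}})\cap \mathrm{Bs}\,V_{m}\bigr)\leq n-k-1$ (if that dimension were $n-k$, some top-dimensional component $Y$ would be contained in $\mathrm{Bs}\,V_{m}$ and the linear space $L(Y)$ would be all of $\mathbb{P}^{d_{m}}$). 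That dimension bound is itself a Bertini-type statement of exactly the kind being proved, so it cannot be quoted; it must be carried along in the induction.

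This is precisely how the paper's proof is organized: the inductive statement is not ``$Z(t_{l_{1}}),\dots,Z(t_{l_{k}})$ are in general position'' but the stronger ``$\dim Z(t_{l_{1}})\cap\dots\cap Z(t_{l_{k}})\cap\tilde V_{j}\leq n-k-j$ for every auxiliary set $\tilde V_{j}$ obtained by intersecting $j$ of the remaining base loci'' (the sets $U_{k}$, and in the inductive step the set $U''$ with $\tilde V_{j}\cap \mathrm{Bs}\,V_{k+1}$). With $\tau$ restricted to $U'\cap U''$, the non-containment $D_{l}\not\subset \mathrm{Bs}\,V_{k+1}$ becomes immediate, and the rest of your argument (the linear condition on $t_{m}$ cutting out a proper subspace of measure zero by Lemma 2.2, plus Fubini) goes through. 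So the fix is to strengthen your induction hypothesis accordingly; as written, the step $Y\not\subset\mathrm{Bs}\,V_{m}$ is asserted without proof and is not a consequence of the stated hypotheses alone.
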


\begin{proof}
Let $\sigma_{l_{1}...l_{k}}=\sigma_{l_{1}}\times...\times\sigma_{l_{k}}$ be the product measure
on $\mathbb{P}^{d_{l_{1}}}\times...\times\mathbb{P}^{d_{l_{k}}}$ for every $1\leq l_{1}<...<l_{k}\leq m$.
Set
\begin{equation*}
U_{k}=\{(t_{l_{1}},...,t_{l_{k}})\in\mathbb{P}^{d_{l_{1}}}\times...\times\mathbb{P}^{d_{l_{k}}}: \dim Z(t_{l_{1}})\cap...\cap Z(t_{l_{k}})\cap \tilde V_{j}\leq n-k-j\},
\end{equation*}
where $1\leq l_{1}<...<l_{k}\leq m, \tilde V_{0}=X$, $\tilde V_{j}=Bs V_{i_{1}}\cap...\cap Bs V_{i_{j}}$ for some $i_{1}<...<i_{j}$
in $\{1,...,m\}\setminus\{l_{1},...,l_{k}\}$ if $1\leq j\leq m-k$. Note that the sets $U_{k}$ depend on the choices of $l_{1},...,l_{k},j$
and $\tilde V_{j}$.

It is sufficient to prove that
\begin{equation*}
\sigma_{l_{1}...l_{k}}(U_{k})=1
\end{equation*}
by induction on $k$ for every subset $U_{k}$, $1\leq l_{1}<...<l_{k}\leq m, 0\leq j\leq m-k$.
We only consider the case when $\{l_{1},...,l_{k}\}=\{1,...,k\}$. Write $\sigma_{k}'=\sigma_{1...k}$ for short.
We first consider the case when $k=1$. If $j=0$, then
\begin{equation*}
U_{1}=\{t_{1}\in\mathbb{P}^{d_{1}}: \dim Z(t_{1})\leq n-1\}=\mathbb{P}^{d_{1}}.
\end{equation*}
If $1\leq j\leq m-1$, let $\tilde V_{j}=\bigcup_{l=1}^{N}D_{l}\cup B$,
where the subsets $D_{l}$ are the irreducible components of $\tilde V_{j}$ of dimension $n-j$ and $B$ is of dimension less than $n-j$.
So $\{t_{1}\in\mathbb{P}^{d_{1}}: D_{l}\subset Z(t_{1})\}$ is a proper linear subspace of $\mathbb{P}^{d_{1}}$.
If not, $D_{l}\subset Bs V_{1}$ implies that $\dim \tilde V_{j}\cap Bs V_{1}=n-j$,
which contradicts the condition that $Bs V_{1},...,Bs V_{m}$ are in general position.
We know that $\dim Z(t_{1})\cap \tilde V_{j}\geq n-j$ if $t_{1}\in\mathbb{P}^{d_{1}}\setminus U_{1}$.
Since $Z(t_{1})\cap \tilde V_{j}$ is an analytic subset in $\tilde V_{j}$, $D_{l}\subset Z(t_{1})\cap \tilde V_{j}$ for some $l$.
It follows that
\begin{equation*}
\mathbb{P}^{d_{1}}\setminus U_{1}=\bigcup_{l=1}^{N}\{t_{1}\in\mathbb{P}^{d_{1}}: D_{l}\subset Z(t_{1})\}.
\end{equation*}
Hence we have that $\sigma_{1}(\mathbb{P}^{d_{1}}\setminus U_{1})=0$ by Lemma 2.2.

Now we can assume that $\sigma_{k}'(U_{k})=1$ for any $U_{k}$ defined as above. Set
\begin{equation*}
\begin{split}
U_{k+1}=\{&(t_{1},...,t_{k+1})\in\mathbb{P}^{d_{1}}\times...\times\mathbb{P}^{d_{k+1}}: \\
&\dim Z(t_{1})\cap...\cap Z(t_{k+1})\cap \tilde V_{j}\leq n-k-1-j \}, \\
U'=\{&(t_{1},...,t_{k})\in\mathbb{P}^{d_{1}}\times...\times\mathbb{P}^{d_{k}}: \\
&\dim Z(t_{1})\cap...\cap Z(t_{k})\cap \tilde V_{j}\leq n-k-j \}, \\
U''=\{&(t_{1},...,t_{k})\in\mathbb{P}^{d_{1}}\times...\times\mathbb{P}^{d_{k}}: \\
&\dim Z(t_{1})\cap...\cap Z(t_{k})\cap \tilde V_{j}\cap Bs V_{k+1}\leq n-k-1-j \}, \\
\end{split}
\end{equation*}
where $0\leq j\leq m-k-1, \tilde V_{0}=X, \tilde V_{j}=Bs V_{i_{1}}\cap...\cap Bs V_{i_{j}}$ for $k+2\leq i_{1}<...<i_{j}\leq m$
if $1\leq j\leq m-k-1$.
Let $U=U'\cap U''$. By induction on $k$, we know that $\sigma_{k}'(U')=\sigma_{k}'(U'')=1$, thus $\sigma_{k}'(U)=1$.

We need to prove that
\begin{equation*}
\sigma_{k+1}'(U_{k+1})=1.
\end{equation*}
It is enough to show that
\begin{equation*}
\sigma_{k+1}'(W)=0, \quad W:=(U\times\mathbb{P}^{d_{k+1}})\setminus U_{k+1}.
\end{equation*}
Given some $t=(t_{1},...,t_{k})\in U$, set
\begin{equation*}
\begin{split}
Z(t):&=Z(t_{1})\cap...\cap Z(t_{k}), \\
W(t):&=\{t_{k+1}\in\mathbb{P}^{d_{k+1}}: \dim Z(t)\cap \tilde V_{j}\cap Z(t_{k+1})\geq n-k-j\}, \\
\end{split}
\end{equation*}
then it is sufficient to show that $\sigma_{k+1}(W(t))=0$.

Let $Z(t)\cap \tilde V_{j}=\bigcup_{l=1}^{N}D_{l}\cup B$, where $D_{l}$ are irreducible components of $Z(t)\cap \tilde V_{j}$
of dimension $n-k-j$, $\dim B\leq n-k-1-j$ as $t\in U\subset U'$. By the same argument in the above,
if $t_{k+1}\in W(t)$, then $Z(t)\cap \tilde V_{j}\cap Z(t_{k+1})$ is an analytic subset of $Z(t)\cap \tilde V_{j}$ of dimension $n-k-j$,
hence there exists some $l$ such that $D_{l}\subset Z(t)\cap \tilde V_{j}\cap Z(t_{k+1})$.
We obtain that
\begin{equation*}
W(t)=\bigcup_{l=1}^{N} F_{l}(t), \quad F_{l}(t):=\{t_{k+1}\in\mathbb{P}^{d_{k+1}}: D_{l}\subset Z(t_{k+1})\}.
\end{equation*}
We claim that not all the sections of $V_{k+1}$ can vanish on $D_{l}$. If not, that is to say, $D_{l}\subset Bs V_{k+1}$,
this implies that
\begin{equation*}
\dim Z_{t}\cap \tilde V_{j}\cap Bs V_{k+1}=n-k-j,
\end{equation*}
which contradicts the fact that $t\in U''$.
Hence we can suppose that $S_{k+1,d_{k+1}}\not\equiv 0$ on $D_{l}$.
So
\begin{equation*}
\begin{split}
& F_{l}(t)\subset\{t_{k+1,0}=0\}\cup G_{l}(t), \\
& G_{l}(t):=\{[1:t_{k+1,1}:...:t_{k+1,d_{k+1}}]\in\mathbb{P}^{d_{k+1}}: D_{l}\subset Z([1:t_{k+1,1}:...:t_{k+1,d_{k+1}}])\}. \\
\end{split}
\end{equation*}
There exists at most one $\eta\in\mathbb{C}$ such that $[1:t_{k+1,1}:...:t_{k+1,d_{k+1}-1}:\eta]\in G_{l}(t)$
for any $(t_{k+1,1},...,t_{k+1,d_{k+1}-1})\in\mathbb{C}^{d_{k+1}-1}$. Otherwise, if there exist two complex numbers $\eta\neq\eta'$,
which satisfy the property, then we have on $D_{l}$,
\begin{equation*}
\begin{split}
S_{k+1,0}+t_{k+1,1}S_{k+1,1}+...+t_{k+1,d_{k+1}-1}S_{k+1,d_{k+1}-1}+\eta S_{k+1,d_{k+1}}&\equiv 0, \\
S_{k+1,0}+t_{k+1,1}S_{k+1,1}+...+t_{k+1,d_{k+1}-1}S_{k+1,d_{k+1}-1}+\eta' S_{k+1,d_{k+1}}&\equiv 0. \\
\end{split}
\end{equation*}
Then we have a contradiction with that $S_{k+1,d_{k+1}}\not\equiv 0$ on $D_{l}$.
This implies that $\sigma_{k+1}(G_{l}(t))=0$. Moreover, $\sigma_{k+1}(F_{l}(t))=0$.
Hence $\sigma_{k+1}(W(t))=0$. This completes the proof.
\end{proof}

In the setting of Theorem 1.1, let $V_{k,p}=H_{(2)}^{0}(X, L_{k}^{p})$ with orthonormal basis $\{S_{k,p}^{j}\}_{j=0}^{d_{k,p}}$.
Then the base locus of $V_{k,p}$ is $A_{k,p}$. Let $Z(t_{k})$ be an analytic hypersurface for any
$t_{k}=[t_{k,0}:...:t_{k,d_{k,p}}]\in\mathbb{P}^{d_{k,p}}$, defined in (13).
Let $\sigma_{p}$ be the product measure of probability moderate measures on
$\mathbb{P}^{d_{1,p}}\times...\times\mathbb{P}^{d_{m,p}}$ in Theorem 1.1 (cf. (7)).
Arguing as in the proof of \cite[Proposition 3.3]{cmn}, we obtain the following

\begin{proposition}
In the above setting,
\par (i) The analytic subsets $Z(t_{1}),...,Z(t_{m})$ are in general position, for almost every $(t_{1},...,t_{m})\in\mathbb{P}^{d_{1,p}}\times...\times\mathbb{P}^{d_{m,p}}$ with respect to $\sigma_{p}$, when $p$ is sufficiently large.
\par (ii) $Z(t_{i_{1}})\cap...\cap Z(t_{i_{k}})$ is of pure dimension $n-k$ for each $1\leq k\leq m, 1\leq i_{1}<...<i_{k}\leq m$.
\end{proposition}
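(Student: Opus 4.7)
The plan is to reduce Proposition 2.4 to a direct application of the Bertini-type Proposition 2.3 to the specific spaces $V_{k,p} = H^0_{(2)}(X, L_k^p)$, with Proposition 2.1 supplying the general-position hypothesis on the base loci. First I would invoke Proposition 2.1(i) with $J = \{1,\ldots,m\}$ and $J' = \emptyset$: this says that for all sufficiently large $p$, the base loci $A_{1,p}, \ldots, A_{m,p}$ are in general position on $X$. Fix such a $p$ from now on.

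Next, I take $V_k := V_{k,p}$ with the orthonormal basis $\{S_{k,p}^j\}_{j=0}^{d_{k,p}}$, so that $\mathrm{Bs}\, V_k = A_{k,p}$; by the previous step, condition (i) of Proposition 2.3 is satisfied. Condition (ii) is just the definition (13) of $Z(t_k)$. Condition (iii) is built into the construction (7) of $\sigma_p$ as a product $\sigma_{1,p} \times \cdots \times \sigma_{m,p}$ of probability measures on the factors $\mathbb{P} H^0_{(2)}(X, L_k^p) \cong \mathbb{P}^{d_{k,p}}$, each of which is moderate by the remark cited in the introduction (the $c_p$ assumption). Proposition 2.3 then yields part (i) of Proposition 2.4 verbatim.

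For part (ii), let $(t_1, \ldots, t_m)$ lie in the full-$\sigma_p$-measure set where general position holds, and fix $1 \leq i_1 < \cdots < i_k \leq m$. The intersection $Z(t_{i_1}) \cap \cdots \cap Z(t_{i_k})$ is locally defined by $k$ holomorphic equations, so by Krull's Hauptidealsatz every irreducible component has codimension at most $k$ in $X$. General position supplies the opposite inequality, forcing each component to have codimension exactly $k$, hence pure dimension $n-k$.

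There is no real obstacle here: the difficult content has already been absorbed into Propositions 2.1 and 2.3, and the proof is essentially a packaging of these two results, with pure-dimensionality following formally from the general-position lower bound combined with the trivial hypersurface-intersection upper bound on codimension. The only point to watch is that both Proposition 2.1 and the application of Proposition 2.3 require $p$ to be sufficiently large, but these are compatible thresholds.
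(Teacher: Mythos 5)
Your proposal is correct and follows essentially the same route as the paper, which simply says the result is obtained by "arguing as in the proof of \cite[Proposition 3.3]{cmn}" — i.e., exactly the combination of Proposition 2.1(i) (with $J=\{1,\dots,m\}$) to get the base loci $A_{1,p},\dots,A_{m,p}$ in general position, followed by the moderate-measure Bertini theorem (Proposition 2.3) applied to $V_{k,p}=H^0_{(2)}(X,L_k^p)$, with pure dimensionality in (ii) coming from the two opposite codimension bounds. No gaps.
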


\section{Dinh-Sibony equidistribution theorem and estimates on multi-projective spaces}
In this section, we introduce the techniques by Dinh-Sibony \cite{ds1}
and some results in \cite{cmn}. After giving some notions,
we prove the crucial estimates on multi-projective spaces associated to certain moderate measures.

Let $(X, \omega)$ (resp. $(Y, \omega_{Y})$) be a compact K\"{a}hler manifold of dimension $n$ (resp. $n_{Y}$).
Recall that a $meromorphic$ $transform$ $F: X\rightarrow Y$ is the graph of an analytic subset $\Gamma\subset X\times Y$
of pure dimension $n_{Y}+k$ such that the natural projections $\pi_{1}: X\times Y\rightarrow X$
and $\pi_{2}: X\times Y\rightarrow Y$ restricted to each irreducible component of the analytic subset $\Gamma$ are surjective.
We write $F=\pi_{2}\circ (\pi_{1}|_{\Gamma})^{-1}$.
The dimension of the fiber $F^{-1}(y):=\pi_{1}(\pi_{2}^{-1}|_{\Gamma}(y))$ is equal to $k$
for the point $y\in Y$ generic. This is the codimension of the meromorphic transform $F$.
If $T$ is a current of bidegree $(l,l)$ on $Y$, $n_{Y}+k-n\leq l\leq n_{Y}$, we define
\begin{equation}
F^{\star}(T):=(\pi_{1})_{\star}(\pi_{2}^{\star}(T)\wedge [\Gamma]),
\end{equation}
where $[\Gamma]$ is the current of integration over $\Gamma$.
We introduce the notations of intermediate degrees of $F$,
\begin{equation*}
\begin{split}
& d(F):=\int_{X}F^{\star}(\omega_{Y}^{n_{Y}})\wedge\omega^{k}, \\
& \delta(F):=\int_{X}F^{\star}(\omega_{Y}^{n_{Y}-1})\wedge\omega^{k+1}. \\
\end{split}
\end{equation*}

To introduce more notions and notations, we first recall the following lemma in \cite[Proposition 2.2]{ds1}.
\begin{lemma}
There exists a constant $r>0$ such that for any positive closed current $T$ of bidegree $(1,1)$ with mass $1$ on $(X, \omega)$,
there is a smooth $(1,1)$-form $\alpha$ which depends only on the cohomology class of $T$ and a q.p.s.h. function $\varphi$
satisfying that
\begin{equation*}
-r\omega\leq\alpha\leq r\omega, \quad dd^{c}\varphi-T=\alpha.
\end{equation*}
\end{lemma}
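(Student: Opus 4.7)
The strategy is to write $T$, up to sign, as a smooth $(1,1)$-form representing (minus) its cohomology class plus a $dd^c$-exact current, using Hodge theory on the compact Kähler manifold $(X,\omega)$. The bound on $\alpha$ will come from the fact that cohomology classes of positive closed currents of mass $1$ form a compact subset of a finite-dimensional real vector space.

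First I would observe that since $T$ is a positive closed $(1,1)$-current of mass $1$, one has $\int_X T\wedge\omega^{n-1}=1$; combined with positivity of $T$, this confines $\{T\}\in H^{1,1}(X,\mathbb{R})$ to a compact slice of the pseudoeffective cone inside the finite-dimensional vector space $H^{1,1}(X,\mathbb{R})$. Next, harmonic projection with respect to the Kähler metric $\omega$ gives a continuous linear map from $H^{1,1}(X,\mathbb{R})$ to the space of smooth real harmonic $(1,1)$-forms: each class admits a distinguished smooth representative depending linearly on it. Applied to the class $-\{T\}$ (to match the sign convention $dd^c\varphi-T=\alpha$ in the statement), this yields a smooth form $\alpha$ with $\{\alpha\}=-\{T\}$, depending only on $\{T\}$. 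Because the harmonic projection is continuous and the admissible set of classes is compact, there exists a constant $r>0$ depending only on $(X,\omega)$ such that $-r\omega\leq \alpha \leq r\omega$ uniformly in $T$.

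Finally, the current $T+\alpha$ is closed and cohomologous to zero, so the global $\partial\overline\partial$-lemma on the compact Kähler manifold $X$ produces a real distribution, in fact an $L^1$ function, $\varphi$ satisfying $dd^c\varphi=T+\alpha$, i.e. $dd^c\varphi-T=\alpha$. Since $T\geq 0$ and $\alpha\geq -r\omega$, we get $dd^c\varphi\geq -r\omega$, so after passing to the upper-semicontinuous representative, $\varphi$ is q.p.s.h.

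The main obstacle is the uniformity of the constant $r$: it must be independent of the particular $T$. This is exactly delivered by the compactness of the mass-$1$ slice of the pseudoeffective cone together with the continuity of the harmonic projection, but the argument must be made with care since positive closed currents of mass $1$ themselves form an infinite-dimensional family whose cohomology classes nonetheless lie in a finite-dimensional bounded set. Everything else reduces to a direct application of Hodge theory and the $\partial\overline\partial$-lemma on $(X,\omega)$.
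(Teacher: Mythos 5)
Your argument is correct. Note that the paper does not actually prove this statement: it is quoted verbatim as \cite[Proposition 2.2]{ds1} (Dinh--Sibony), so there is no internal proof to compare against. Your route --- compactness of the mass-one slice of the pseudoeffective cone in $H^{1,1}(X,\mathbb{R})$, a uniformly bounded smooth representative of $-\{T\}$ (via harmonic projection), and the $\partial\overline\partial$-lemma to produce $\varphi$ with $dd^{c}\varphi=T+\alpha\geq -r\omega$ --- is essentially the standard proof of that cited result, with the harmonic projection playing the role of the fixed basis of smooth representatives used in the original.
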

Denote by $r(X, \omega)$ the smallest $r$ in Lemma 3.1.
For example, $r(\mathbb{P}^{N},\omega_{FS})=1$.
Consider a positive measure $\mu$ on $X$. $\mu$ is said to be a PLB measure if all q.p.s.h. functions are integrable with respect to $\mu$.
It is easy to see that all moderate measures are PLB.
Now given a PLB probability measure $\mu$ on $X$ and $t\in\mathbb{R}$, we define,
\begin{equation}
\begin{split}
Q(X, \omega):&=\{\varphi ~q.p.s.h. ~on~ X, dd^{c}\varphi\geq -r(X,\omega)\omega\}, \\
R(X, \omega, \mu):&=\sup\{\max_{X}\varphi: \varphi\in Q(X, \omega), \int_{X}\varphi d\mu=0\} \\
&=\sup\{-\int_{X}\varphi d\mu: \varphi\in Q(X, \omega), \max_{X}\varphi=0\}, \\
S(X, \omega, \mu):&=\sup\{\bigl|\int\varphi d\mu \bigr|: \varphi\in Q(X, \omega), \int_{X}\varphi\omega^{n}=0\}, \\
\Delta(X, \omega, \mu, t):&=\sup\{\mu(\varphi<-t): \varphi\in Q(X, \omega), \int_{X}\varphi d\mu=0\}. \\
\end{split}
\end{equation}
These constants are related to Alexander-Dinh-Sibony capacity, see \cite[A. 2]{ds1} and \cite[Section 5]{gz}.

Let $\Phi_{p}$ be a sequence of meromorphic transforms from a projective manifold $(X, \omega)$ into
the compact K\"{a}hler manifolds $(\mathbb{X}_{p}, \omega_{p})$ of the same codimension $k$, where
$\mathbb{X}_{p}$ is defined in (1).
Let
\begin{equation*}
d_{0,p}=d_{1,p}+...+d_{m,p}
\end{equation*}
be the dimension of $\mathbb{X}_{p}$.
Consider a PLB probability measure $\mu_{p}$ on $\mathbb{X}_{p}$,
for every $p>0, \epsilon>0$, we define
\begin{equation}
E_{p}(\epsilon):=\bigcup_{\|\phi\|_{\mathscr{C}^{2}}\leq 1}\{x_{p}\in\mathbb{X}_{p}:
\bigl|\bigl<\Phi_{p}^{\star}(\delta_{x_{p}})-\Phi_{p}^{\star}(\mu_{p}), \phi\bigr>\bigr|\geq d(\Phi_{p})\epsilon\},
\end{equation}
where $\delta_{x_{p}}$ is the Dirac measure at the point $x_{p}$.
By the definition of the pullback of $\Phi_{p}$ on currents, we see that $\Phi_{p}^{\star}(\delta_{x_{p}})$
and $\Phi_{p}^{\star}(\mu_{p})$ are positive closed currents of bidimension $(k,k)$ on $X$.
Moreover, $\Phi_{p}^{\star}(\delta_{x_{p}})$ is well defined for $x_{p}\in\mathbb{X}_{p}$ generic.
The following estimate from Dinh-Sibony equidistribution theorem \cite{ds1} is crucial in our paper.
\begin{theorem}
Let $\eta_{\epsilon, p}:=\epsilon\delta(\Phi_{p})^{-1}d(\Phi_{p})-3R(\mathbb{X}_{p}, \omega_{p}, \mu_{p})$,
then
\begin{equation*}
\mu_{p}(E_{p}(\epsilon))\leq\Delta(\mathbb{X}_{p}, \omega_{p}, \mu_{p}, \eta_{\epsilon, p}).
\end{equation*}
\end{theorem}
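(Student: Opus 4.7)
The plan is to follow the Dinh-Sibony equidistribution strategy \cite{ds1}: transfer the $\mathscr{C}^{2}$-discrepancy on $X$ into a tail estimate for a q.p.s.h.\ potential on $\mathbb{X}_{p}$, and then invoke the definitions of $R$ and $\Delta$.

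I would start with the duality between $\Phi_{p}^{\star}$ and $(\Phi_{p})_{\star}$. For each smooth test form $\phi$ of bidegree $(k,k)$ on $X$ with $\|\phi\|_{\mathscr{C}^{2}}\leq 1$, define $u_{\phi}:=(\Phi_{p})_{\star}(\phi)$ by fiber integration; this is a bounded function on $\mathbb{X}_{p}$ defined off the indeterminacy locus of $\Phi_{p}$. Unwinding (14),
$$\bigl\langle\Phi_{p}^{\star}(\delta_{x_{p}})-\Phi_{p}^{\star}(\mu_{p}),\phi\bigr\rangle=u_{\phi}(x_{p})-\int_{\mathbb{X}_{p}}u_{\phi}\,d\mu_{p}$$
holds for $\mu_{p}$-generic $x_{p}$, so membership in $E_{p}(\epsilon)$ becomes a large-deviation condition for the scalar function $u_{\phi}$ against $\mu_{p}$.

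Next I would establish a uniform potential bound on $u_{\phi}$. From $\|\phi\|_{\mathscr{C}^{2}}\leq 1$ one has $\pm dd^{c}\phi\leq C\omega^{k+1}$ for an absolute $C$, and pushing forward through the graph $\Gamma$ yields $\pm dd^{c}u_{\phi}\leq Cr(\mathbb{X}_{p},\omega_{p})\,\delta(\Phi_{p})\,\omega_{p}$; the intermediate degree $\delta(\Phi_{p})$ from (15) emerges exactly from integrating $\pi_{1}^{\star}(\omega^{k+1})\wedge[\Gamma]$ against $\omega_{p}^{n_{Y}-1}$, while Lemma 3.1 supplies the cohomological normalization through $r(\mathbb{X}_{p},\omega_{p})$. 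Consequently both $\psi_{\phi}:=\bigl(u_{\phi}-\int u_{\phi}\,d\mu_{p}\bigr)/[r(\mathbb{X}_{p},\omega_{p})\delta(\Phi_{p})]$ and its negative lie in $Q(\mathbb{X}_{p},\omega_{p})$ with vanishing $\mu_{p}$-mean, and the defining inequality $|u_{\phi}(x_{p})-\int u_{\phi}\,d\mu_{p}|\geq d(\Phi_{p})\epsilon$ rewrites as a lower bound on $|\psi_{\phi}(x_{p})|$ of order $\epsilon d(\Phi_{p})/\delta(\Phi_{p})$. To convert an upward deviation into the downward one required by $\Delta$, I would flip sign and recenter by the maximum; each such rearrangement costs at most $R(\mathbb{X}_{p},\omega_{p},\mu_{p})$ by the alternative characterisation of $R$ in (17), and the three resulting shifts (mean, max, sign flip) are exactly what the term $-3R$ inside $\eta_{\epsilon,p}$ absorbs. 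The output is, for each $x_{p}\in E_{p}(\epsilon)$, a function $\widetilde{\psi}_{\phi}\in Q(\mathbb{X}_{p},\omega_{p})$ with $\int\widetilde{\psi}_{\phi}\,d\mu_{p}=0$ and $\widetilde{\psi}_{\phi}(x_{p})<-\eta_{\epsilon,p}$.

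The main obstacle is that $E_{p}(\epsilon)$ is an \emph{uncountable} union over test forms $\phi$, so the pointwise bounds $\mu_{p}\{\widetilde{\psi}_{\phi}<-\eta_{\epsilon,p}\}\leq\Delta$ cannot be combined by a naive union bound. The resolution, which is the conceptual heart of the Dinh-Sibony method, is to produce a single $\Psi\in Q(\mathbb{X}_{p},\omega_{p})$ with $\int\Psi\,d\mu_{p}=0$ such that $E_{p}(\epsilon)\subset\{\Psi<-\eta_{\epsilon,p}\}$; a natural candidate is the upper semicontinuous regularisation of $\inf_{\phi}\widetilde{\psi}_{\phi}$, or equivalently the super-potential of the signed current $\Phi_{p}^{\star}(\delta_{x_{p}})-\Phi_{p}^{\star}(\mu_{p})$, whose q.p.s.h.\ character in the $x_{p}$ variable follows from the general meromorphic-transform formalism of \cite{ds1}. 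Once such a $\Psi$ is available, the definition of $\Delta(\mathbb{X}_{p},\omega_{p},\mu_{p},\eta_{\epsilon,p})$ applied to $\Psi$ gives $\mu_{p}(E_{p}(\epsilon))\leq\mu_{p}\{\Psi<-\eta_{\epsilon,p}\}\leq\Delta(\mathbb{X}_{p},\omega_{p},\mu_{p},\eta_{\epsilon,p})$, completing the proof.
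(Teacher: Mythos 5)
First, a remark on scope: the paper offers no proof of this statement --- Theorem 3.2 is quoted verbatim from Dinh--Sibony \cite{ds1} --- so your attempt can only be measured against the argument there. Your skeleton is the right one: the duality $\langle\Phi_{p}^{\star}(\delta_{x_{p}})-\Phi_{p}^{\star}(\mu_{p}),\phi\rangle=u_{\phi}(x_{p})-\int u_{\phi}\,d\mu_{p}$ with $u_{\phi}=(\Phi_{p})_{\star}(\phi)$, the normalisation by $\delta(\Phi_{p})^{-1}d(\Phi_{p})$, recentering steps each costing an $R$, and a final appeal to $\Delta$. You also correctly identify the uncountable union over $\phi$ as the crux. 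But the two steps that are supposed to carry the proof are genuinely wrong.

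(a) The pointwise bound $\pm dd^{c}u_{\phi}\leq Cr(\mathbb{X}_{p},\omega_{p})\,\delta(\Phi_{p})\,\omega_{p}$ does not hold: from $\pm dd^{c}\phi\leq C\omega^{k+1}$ one only gets $\pm dd^{c}u_{\phi}\leq C(\Phi_{p})_{\star}(\omega^{k+1})$, and the right-hand side is a positive \emph{closed current} of mass $\delta(\Phi_{p})$, not a smooth form dominated by $\omega_{p}$. Consequently it is false that $\psi_{\phi}$ and $-\psi_{\phi}$ both lie in $Q(\mathbb{X}_{p},\omega_{p})$. What Lemma 3.1 actually gives, applied to $\Theta:=\delta(\Phi_{p})^{-1}(\Phi_{p})_{\star}(\omega^{k+1})=dd^{c}u+\alpha$ with $-r\omega_{p}\leq\alpha\leq r\omega_{p}$, is that (up to a universal constant in the normalisation) $\pm\delta(\Phi_{p})^{-1}u_{\phi}+u$ are q.p.s.h.\ with $dd^{c}\geq-r\omega_{p}$; that is, $u_{\phi}$ is only a \emph{difference} of functions in $Q$, one summand being the fixed potential $u$. (b) Your candidate for the single dominating function $\Psi$, the u.s.c.\ regularisation of $\inf_{\phi}\widetilde{\psi}_{\phi}$, cannot work: an infimum of a family of q.p.s.h.\ functions is not q.p.s.h., and upper semicontinuous regularisation repairs suprema, not infima. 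The observation you are missing is that the correction $u$ in (a) is \emph{independent of} $\phi$. Setting $\varphi_{\phi}:=\delta(\Phi_{p})^{-1}\bigl(u_{\phi}-\int u_{\phi}\,d\mu_{p}\bigr)$ and normalising $\langle\mu_{p},u\rangle=0$, the functions $\pm\varphi_{\phi}+u$ lie in $Q(\mathbb{X}_{p},\omega_{p})$ with zero $\mu_{p}$-mean, so the definition of $R$ gives the two-sided bound $|\varphi_{\phi}|\leq R-u$ everywhere, \emph{simultaneously for all} $\phi$. Hence $E_{p}(\epsilon)\subset\{u\leq R-t\}$ with $t=\epsilon\delta(\Phi_{p})^{-1}d(\Phi_{p})$, a sublevel set of the single function $u$, to which the definition of $\Delta$ applies directly; the $3R$ in $\eta_{\epsilon,p}$ absorbs the various normalisations. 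Without this mechanism the union over $\phi$ remains unresolved and the proof is incomplete.
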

We also need the following important estimate, which was deduced easily from \cite[Lemma 4.2(c),Proposition 4.3]{ds1}.
\begin{theorem}
In the above setting, we have
\begin{equation*}
\bigl|\bigl<d(\Phi_{p})^{-1}(\Phi_{p}^{\star}(\mu_{p})
-\Phi_{p}^{\star}(\omega_{p}^{d_{0,p}})), \phi\bigr>\bigr|\leq
2S(\mathbb{X}_{p}, \omega_{p}, \mu_{p})\delta(\Phi_{p})d(\Phi_{p})^{-1}\|\phi\|_{\mathscr{C}^{2}}
\end{equation*}
for any $(k,k)$-form $\phi$ of class $\mathscr{C}^{2}$ on $X$.
\end{theorem}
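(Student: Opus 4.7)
The strategy is to transport the pairing from $X$ to $\mathbb{X}_{p}$ via the graph $\Gamma_{p}$ of $\Phi_{p}$ and then exploit the definition of $S(\mathbb{X}_{p},\omega_{p},\mu_{p})$ in (15). Concretely, using $\Phi_{p}=\pi_{2}\circ(\pi_{1}|_{\Gamma_{p}})^{-1}$ together with the definition (14), and the fact that $\phi$ is $\mathscr{C}^{2}$ while the generic fibre of $\pi_{2}|_{\Gamma_{p}}$ has dimension $k$, I would first rewrite the left-hand side as an integral against $\mu_{p}-\omega_{p}^{d_{0,p}}$ on $\mathbb{X}_{p}$:
\begin{equation*}
\bigl\langle \Phi_{p}^{\star}(\mu_{p})-\Phi_{p}^{\star}(\omega_{p}^{d_{0,p}}),\phi\bigr\rangle = \int_{\mathbb{X}_{p}}(\Phi_{p})_{\star}(\phi)\,d(\mu_{p}-\omega_{p}^{d_{0,p}}),
\end{equation*}
where $(\Phi_{p})_{\star}(\phi):=(\pi_{2})_{\star}(\pi_{1}^{\star}\phi\wedge[\Gamma_{p}])\in L^{1}(\mathbb{X}_{p})$ is the fibre-integral function associated with $\phi$.

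The next step is the crucial regularity estimate on $(\Phi_{p})_{\star}(\phi)$ supplied by \cite[Lemma 4.2(c)]{ds1} together with \cite[Proposition 4.3]{ds1}: there is a constant $c_{0}>0$ (depending only on $r(\mathbb{X}_{p},\omega_{p})$) such that both $dd^{c}\bigl((\Phi_{p})_{\star}(\phi)\bigr)$ and $-dd^{c}\bigl((\Phi_{p})_{\star}(\phi)\bigr)$ are bounded below by $-c_{0}\,\delta(\Phi_{p})\,\|\phi\|_{\mathscr{C}^{2}}\,\omega_{p}$. Setting
\begin{equation*}
u_{p}:=(\Phi_{p})_{\star}(\phi)-\int_{\mathbb{X}_{p}}(\Phi_{p})_{\star}(\phi)\,\omega_{p}^{d_{0,p}},
\end{equation*}
the additive constant does not affect the pairing since both $\mu_{p}$ and $\omega_{p}^{d_{0,p}}$ are probability measures; hence $\int u_{p}\,d(\mu_{p}-\omega_{p}^{d_{0,p}})$ equals the quantity of interest, and $\int u_{p}\,\omega_{p}^{d_{0,p}}=0$. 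After rescaling by a factor proportional to $c_{0}\,\delta(\Phi_{p})\,\|\phi\|_{\mathscr{C}^{2}}$, both $\pm u_{p}$ become admissible test functions in the supremum defining $S(\mathbb{X}_{p},\omega_{p},\mu_{p})$, i.e.\ they lie in $Q(\mathbb{X}_{p},\omega_{p})$ with $\omega_{p}^{d_{0,p}}$-mean zero, yielding
\begin{equation*}
\Bigl|\int_{\mathbb{X}_{p}}u_{p}\,d\mu_{p}\Bigr|\leq c_{0}\,\delta(\Phi_{p})\,\|\phi\|_{\mathscr{C}^{2}}\,S(\mathbb{X}_{p},\omega_{p},\mu_{p}).
\end{equation*}
Dividing by $d(\Phi_{p})$ and absorbing $c_{0}$ into the factor $2$ on the right-hand side gives the asserted inequality.

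The sole nontrivial ingredient, and the only expected obstacle, is the two-sided $dd^{c}$-estimate on $(\Phi_{p})_{\star}(\phi)$, namely that the fibre integral of a $\mathscr{C}^{2}$ form picks up precisely the intermediate-degree factor $\delta(\Phi_{p})$ in its quasi-plurisubharmonic bound. This is the content of \cite[Lemma 4.2(c)]{ds1}, proved by a detailed local analysis of how the slices of $\Gamma_{p}$ vary under $\pi_{2}$; everything else in the argument is a purely formal consequence of the definitions (14) and (15).
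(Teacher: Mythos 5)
Your proposal is correct and follows essentially the same route as the paper, which gives no independent proof of Theorem 3.3 but states that it is deduced from \cite[Lemma 4.2(c), Proposition 4.3]{ds1}: your adjunction $\langle \Phi_{p}^{\star}(\mu_{p})-\Phi_{p}^{\star}(\omega_{p}^{d_{0,p}}),\phi\rangle=\int_{\mathbb{X}_{p}}(\Phi_{p})_{\star}(\phi)\,d(\mu_{p}-\omega_{p}^{d_{0,p}})$, the two-sided $dd^{c}$-bound $\pm dd^{c}(\Phi_{p})_{\star}(\phi)\geq -c_{0}\,\delta(\Phi_{p})\|\phi\|_{\mathscr{C}^{2}}\,\omega_{p}$, and the normalization feeding into the supremum defining $S(\mathbb{X}_{p},\omega_{p},\mu_{p})$ are precisely that deduction (with the factor $2$ arising from treating $+u_{p}$ and $-u_{p}$, i.e.\ from the decomposition of the fibre integral as a difference of two quasi-p.s.h.\ functions). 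The only nontrivial input is, as you say, the Dinh--Sibony estimate on the fibre integral, which you correctly identify and cite rather than reprove.
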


From now on, we study the special case when the meromorphic transforms are induced by Kodaira maps defined in Section 2.
We keep the settings from Section 1 in the sequel. Recall that the Kodaira map is defined in (9)
with an orthonormal basis $\{S_{k,p}^{j}\}_{j=0}^{d_{k,p}}$ on $\mathbb{P}H_{(2)}^{0}(X,L_{k}^{p})$.
We claim that the map is a meromorphic transform with the graph
\begin{equation}
\Gamma_{k,p}=\{(x,S)\in X\times\mathbb{P}H_{(2)}^{0}(X,L_{k}^{p}): S(x)=0\}.
\end{equation}
Since $\dim H_{(2)}^{0}(X,L_{k}^{p})\geq p^{n}/C\geq 2$ for some constant $C>0$ (see Theorem 4.1),
then for every point $x\in X$, there exists a section $S\in H_{(2)}^{0}(X,L_{k}^{p})$ such that $S(x)=0$.
Hence the projection from $\Gamma_{k,p}$ to $X$ is surjective.
Since $L_{k}^{p}$ is not trivial, then there are no nowhere vanishing sections.
That is to say, every global section $S\in H_{(2)}^{0}(X,L_{k}^{p})$ must vanish at some point $x\in X$.
Hence the projection from $\Gamma_{k,p}$ to $\mathbb{P}H_{(2)}^{0}(X,L_{k}^{p})$ is surjective.
The claim is proved. Note that
\begin{equation*}
\begin{split}
& \Phi_{k,p}(x)=\{S\in\mathbb{P}H_{(2)}^{0}(X,L_{k}^{p}): S(x)=0\}, \\
& \Phi_{k,p}^{-1}(S)=\{x\in X: S(x)=0\}. \\
\end{split}
\end{equation*}
Let
\begin{equation*}
\Phi_{p}: X\rightarrow\mathbb{X}_{p}
\end{equation*}
be the product map of $\Phi_{1,p},...,\Phi_{m,p}$.
We claim that $\Phi_{p}$ is also a meromorphic transform with the graph
\begin{equation*}
\Gamma_{p}=\{(x,S_{p1},...,S_{pm})\in X\times\mathbb{X}_{p}: S_{p1}(x)=...=S_{pm}(x)=0\}.
\end{equation*}
By the argument below (17), for every point $x\in X$ there exist $S_{pk}\in H_{(2)}^{0}(X,L_{k}^{p})$
such that $S_{pk}(x)=0$, $\forall 1\leq k\leq m$.
So the projection $\Pi_{1}: \Gamma_{p}\rightarrow X$ is surjective.
The projection $\Pi_{2}: \Gamma_{p}\rightarrow \mathbb{X}_{p}$ is proper,
then the image $\Pi_{2}(\Gamma_{p})$ is an analytic subvariety of $\mathbb{X}_{p}$ by Remmert's proper mapping theorem \cite{gr}.
Note that the zero set of every $S_{pk}\in H_{(2)}^{0}(X,L_{k}^{p})$ is represented by $Z(t_{k})$ for some $t_{k}$ defined in (13).
Then by Proposition 2.4 for almost every $(S_{p1},...,S_{pm})\in\mathbb{X}_{p}$ with respect to $\sigma_{p}$,
the common zero set of $S_{p1},...,S_{pm}$ is of pure dimension $n-m\geq 0$.
Then there exists some point $x\in X$ such that $(x,S_{p1},...,S_{pm})\in \Gamma_{p}$.
So $\sigma_{p}(\Pi_{2}(\Gamma_{p}))=1$. Hence $\Pi_{2}$ is surjective.
Indeed, if $\Pi_{2}$ is not surjective, then $\Pi_{2}(\Gamma_{p})$ is a proper analytic subvariety of $\mathbb{X}_{p}$,
Lemma 2.2 implies that $\sigma_{p}(\Pi_{2}(\Gamma_{p}))=0$, a contradiction.
Hence $\Phi_{p}$ is a meromorphic transform of codimension $n-m$ with fibers
\begin{equation*}
\Phi_{p}^{-1}(S_{p})=\{x\in X: S_{p1}(x)=...=S_{pm}(x)=0\},
\end{equation*}
where $S_{p}=(S_{p1},...,S_{pm})\in\mathbb{X}_{p}$.

Considering the product map of any $\Phi_{i_{1},p},...,\Phi_{i_{k},p}, 1\leq i_{1}<...<i_{k}\leq m$,
it follows from Proposition 2.4 that, the analytic subsets $(S_{p1}=0),...,(S_{pm}=0)$ are in general position
for $S_{p}=(S_{p1},...,S_{pm})\in\mathbb{X}_{p}$ generic. Then by \cite{de},
the current $[S_{p}=0]=[S_{p1}=0]\wedge...\wedge[S_{pm}=0]$ of bidegree $(m,m)$ is well defined for allmost all
$S_{p}\in\mathbb{X}_{p}$ with respect to $\sigma_{p}$.
\begin{lemma}
Denote by $\delta_{S_{p}}$ the Dirac measure at the point $S_{p}\in\mathbb{X}_{p}$,
then $\Phi_{p}^{\star}(\delta_{S_{p}})=[S_{p}=0]$.
\end{lemma}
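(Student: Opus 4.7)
The plan is to unpack the definition (15) of the pullback under a meromorphic transform and identify each factor for a $\sigma_p$-generic point $S_p$. First, by Proposition 2.4 combined with Lemma 2.2, for $\sigma_p$-almost every $S_p = (S_{p1},\ldots,S_{pm}) \in \mathbb{X}_p$, the divisors $(S_{p1}=0),\ldots,(S_{pm}=0)$ are in general position with generic multiplicity one, and the common zero set $\Phi_p^{-1}(S_p)$ has pure dimension $n-m$, equal to the codimension of $\Phi_p$. Under these conditions, Demailly's intersection theorem \cite{de} gives that the wedge product $[S_p=0] = [S_{p1}=0] \wedge \cdots \wedge [S_{pm}=0]$ is a well-defined positive closed current on $X$, and it coincides with the current of integration $[\Phi_p^{-1}(S_p)]$.

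Next, I would compute $\Phi_p^\star(\delta_{S_p})$ directly from (15). Since $\delta_{S_p}$ is the current of integration over the point $\{S_p\}$ and $\pi_2: X \times \mathbb{X}_p \to \mathbb{X}_p$ is a trivial submersion, the pullback satisfies $\pi_2^\star(\delta_{S_p}) = [X \times \{S_p\}]$ as currents on $X \times \mathbb{X}_p$. Hence
\begin{equation*}
\Phi_p^\star(\delta_{S_p}) = (\pi_1)_\star\bigl([X\times\{S_p\}] \wedge [\Gamma_p]\bigr).
\end{equation*}

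The core step is to identify the intersection $[X\times\{S_p\}]\wedge[\Gamma_p]$ for generic $S_p$. The set-theoretic intersection $\Gamma_p \cap (X\times\{S_p\})$ equals $\{(x,S_p) : S_{p1}(x)=\cdots=S_{pm}(x)=0\}$, which for $\sigma_p$-a.e.\ $S_p$ has the expected dimension $n-m$, so the intersection is proper. Working locally on $X \times \mathbb{X}_p$, where $\Gamma_p$ is cut out by the vanishing of the $m$ tautological sections on the factors, the slice by $X \times \{S_p\}$ realizes these sections as $S_{p1},\ldots,S_{pm}$, and transversality gives
\begin{equation*}
[X\times\{S_p\}] \wedge [\Gamma_p] = [\Gamma_p \cap (X\times\{S_p\})]
\end{equation*}
with all multiplicities equal to one. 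Since $\pi_1$ restricts to a biholomorphism from this intersection onto $\Phi_p^{-1}(S_p)$, pushing forward yields $\Phi_p^\star(\delta_{S_p}) = [\Phi_p^{-1}(S_p)] = [S_p=0]$, as required.

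The main obstacle will be rigorously justifying the identification of the current-theoretic wedge $[X\times\{S_p\}]\wedge[\Gamma_p]$ with the geometric intersection counted with multiplicities, since $\Gamma_p$ is singular in general. I would handle this via the slicing theory for positive closed currents: the slices $\langle [\Gamma_p], \pi_2, S_p\rangle$ are defined and coincide with the set-theoretic fibers for $S_p$ outside a proper analytic subset of $\mathbb{X}_p$, which has $\sigma_p$-measure zero by Lemma 2.2. Alternatively, one can reduce to the local situation where $\Gamma_p$ is locally a complete intersection cut out by the $m$ universal sections, and invoke Demailly's theorem again inside $X\times \mathbb{X}_p$ to get the same conclusion.
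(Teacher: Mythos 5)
Your proposal is correct and follows essentially the same route as the paper: unwind definition (14), identify $\Pi_{2}^{\star}(\delta_{S_{p}})$ with the current of integration over the fiber $X\times\{S_{p}\}$, equate the wedge with $[\Gamma_{p}]$ to integration over the set-theoretic intersection, and push forward by $\Pi_{1}$. You are in fact more careful than the paper, which silently passes from $[\Pi_{2}^{-1}(S_{p})]\wedge[\Gamma_{p}]$ to integration over $\Pi_{2}^{-1}(S_{p})\cap\Gamma_{p}$ without addressing multiplicities or genericity, whereas you flag this step and indicate how to justify it via slicing.
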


\begin{proof}
By the definition of pullbacks of meromorphic transforms in (14),
we have
\begin{equation*}
\Phi_{p}^{\star}(\delta_{S_{p}})=\Pi_{1\star}(\Pi_{2}^{\star}(\delta_{S_{p}})\wedge[\Gamma_{p}]).
\end{equation*}
For any test $(n-m,n-m)$-form $\phi$ on $X$,
\begin{equation*}
\begin{split}
\big<\Phi_{p}^{\star}(\delta_{S_{p}}),\phi \bigr>&=\int_{X\times\mathbb{X}_{p}}\Pi_{2}^{\star}(\delta_{S_{p}})\wedge[\Gamma_{p}]\wedge\Pi_{1}^{\star}(\phi) \\
& =\int_{X\times\mathbb{X}_{p}}[\Pi_{2}^{-1}(S_{p})]\wedge[\Gamma_{p}]\wedge\Pi_{1}^{\star}(\phi) \\
& =\int_{\Pi_{2}^{-1}(S_{p})\cap\Gamma_{p}}\Pi_{1}^{\star}(\phi)=\int_{\{x\in X: S_{p}(x)=0\}}\phi \\
& =\bigl<[S_{p}=0], \phi \bigr>. \\
\end{split}
\end{equation*}
The proof is completed.
\end{proof}
\begin{remark}
Note that $\Phi_{k,p}^{\star}(\delta_{S_{pk}})=[S_{pk}=0]$ for each $1\leq k\leq m$.
The same argument yields
\begin{equation*}
\begin{split}
\Phi_{p}^{\star}(\delta_{S_{p}})&=[S_{p}=0]=[S_{p1}=0]\wedge...\wedge[S_{pm}=0] \\
& =\Phi_{1,p}^{\star}(\delta_{S_{p1}})\wedge...\wedge\Phi_{m,p}^{\star}(\delta_{S_{pm}}). \\
\end{split}
\end{equation*}
\end{remark}

Recall that $\pi_{k,p}: \mathbb{X}_{p}\rightarrow\mathbb{P}H_{(2)}^{0}(X, L_{k}^{p})$ is the natural projection.
Set
\begin{equation}
\omega_{p}:=c_{0,p}(\pi_{1,p}^{\star}\omega_{FS}+...+\pi_{m,p}^{\star}\omega_{FS}).
\end{equation}
We always assume that $\omega_{p}^{d_{0,p}}$ is a probability measure on $\mathbb{X}_{p}$.
Then $c_{0,p}$ satisfies the following condition,
\begin{equation}
(c_{0,p})^{-d_{0,p}}=\frac{d_{0,p}!}{d_{1,p}!...d_{m,p}!}.
\end{equation}
The sequence $\{c_{0,p}\}$ has a lower bound by using Stirling's formula (cf. \cite[p9]{dmm} and \cite[Lemma 4.3]{cmn}).
\begin{lemma}
There exists a positive constant $c_{0}$ such that $c_{0,p}\geq c_{0}$ for all $p\geq 1$.
\end{lemma}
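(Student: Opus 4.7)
The plan is to rewrite the defining identity in logarithmic form and analyze it via Stirling's formula, reducing the bound to a standard entropy estimate.

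First I would take logarithms of the identity $(c_{0,p})^{-d_{0,p}} = d_{0,p}!/(d_{1,p}!\cdots d_{m,p}!)$, getting
\begin{equation*}
\log c_{0,p}^{-1} = \frac{1}{d_{0,p}}\Bigl(\log d_{0,p}! - \sum_{k=1}^{m}\log d_{k,p}!\Bigr).
\end{equation*}
Then I would apply Stirling's formula $\log n! = n\log n - n + O(\log n)$ to each term. Since $d_{0,p}=d_{1,p}+\cdots+d_{m,p}$, the linear parts $-d_{0,p}$ and $+\sum_k d_{k,p}$ cancel exactly, leaving only the $n\log n$ contributions plus an error.

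Writing $p_k:=d_{k,p}/d_{0,p}$, so that $\sum_k p_k = 1$, the surviving main term reorganizes as
\begin{equation*}
\log d_{0,p} - \sum_{k=1}^{m}\frac{d_{k,p}}{d_{0,p}}\log d_{k,p} = \sum_{k=1}^{m} p_k\log\frac{1}{p_k},
\end{equation*}
which is the Shannon entropy of the probability vector $(p_1,\ldots,p_m)$. By Jensen's inequality this is bounded above by $\log m$, uniformly in $p$. The error term from Stirling is $O\bigl(\log d_{0,p}/d_{0,p}\bigr)$, and since Theorem 4.1 provides $d_{k,p}\geq p^{n}/C$ for each $k$, we have $d_{0,p}\to\infty$, so this error tends to $0$ and is certainly bounded for $p\geq 1$.

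Combining, $\log c_{0,p}^{-1} \leq \log m + O(1)$ uniformly in $p$, which gives $c_{0,p}\geq c_0 := (m\cdot \mathrm{const})^{-1}>0$ for all $p\geq 1$, possibly after adjusting the constant to handle any finite number of small values of $p$ separately (where $c_{0,p}>0$ trivially from the multinomial identity). The only subtle point is checking that the Stirling remainder truly contributes a bounded quantity when divided by $d_{0,p}$, but this is immediate from the lower bound $d_{0,p}\geq m p^{n}/C\geq m/C$; there is no genuine obstacle, and indeed the bound is nothing more than the observation that the multinomial coefficient $\binom{d_{0,p}}{d_{1,p},\ldots,d_{m,p}}$ is at most $m^{d_{0,p}}$.
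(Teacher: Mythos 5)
Your argument is correct and follows essentially the same route the paper indicates (the paper simply invokes Stirling's formula and cites \cite[p9]{dmm} and \cite[Lemma 4.3]{cmn} without details); the entropy bound $\sum_k p_k\log(1/p_k)\leq\log m$ plus the vanishing Stirling remainder does yield a uniform lower bound on $c_{0,p}$. Your closing observation that $\binom{d_{0,p}}{d_{1,p},\ldots,d_{m,p}}\leq m^{d_{0,p}}$ gives the clean bound $c_{0,p}\geq 1/m$ directly, making Stirling's formula unnecessary altogether.
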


To simplify the notations, we write
\begin{equation*}
\begin{split}
d_{p}&=d(\Phi_{p}):=\int_{X}\Phi_{p}^{\star}(\omega_{p}^{d_{0,p}})\wedge\omega^{n-m}, \\
\delta_{p}&=\delta(\Phi_{p}):=\int_{X}\Phi_{p}^{\star}(\omega_{p}^{d_{0,p}-1})\wedge\omega^{n-m+1}. \\
\end{split}
\end{equation*}
Using the classical cohomological arguments, $d_{p}$ and $\delta_{p}$ can be calculated as follows \cite[Lemma 4.4]{cmn}.
\begin{proposition}
In the above setting, we have
\begin{equation*}
\begin{split}
d_{p}&=p^{m}\|c_{1}(L_{1},h_{1})\wedge...\wedge c_{1}(L_{m},h_{m})\|, \\
\delta_{p}&=\frac{p^{m-1}}{c_{0,p}}\sum_{k=1}^{m}\frac{d_{k,p}}{d_{0,p}}\bigl\|\bigwedge_{l=1,l\neq k}^{m}c_{1}(L_{l},h_{l}) \bigr\|. \\
\end{split}
\end{equation*}
\end{proposition}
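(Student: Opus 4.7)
The plan is to compute both $d_p$ and $\delta_p$ cohomologically on $X\times\mathbb{X}_p$, using that these quantities are mass integrals of positive closed currents and therefore depend only on cohomology classes.

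First, I would expand $\omega_p^{d_{0,p}}$ and $\omega_p^{d_{0,p}-1}$ via the multinomial theorem. Since $\pi_{k,p}^{\star}\omega_{FS}^{d_{k,p}+1}=0$ on $\mathbb{X}_p$, only terms whose $k$-th exponent is at most $d_{k,p}$ survive; combined with relation (19) this collapses the expansions to
\begin{equation*}
\omega_p^{d_{0,p}} = \bigwedge_{k=1}^{m}\pi_{k,p}^{\star}\omega_{FS}^{d_{k,p}},\qquad
\omega_p^{d_{0,p}-1} = \frac{1}{c_{0,p}}\sum_{k=1}^{m}\frac{d_{k,p}}{d_{0,p}}\,\pi_{k,p}^{\star}\omega_{FS}^{d_{k,p}-1}\wedge\bigwedge_{l\neq k}\pi_{l,p}^{\star}\omega_{FS}^{d_{l,p}}.
\end{equation*}

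Next I would identify $[\Gamma_p]$ in $H^{m,m}(X\times\mathbb{X}_p,\mathbb{R})$. Each hypersurface $\tilde{\Gamma}_{k,p}=\{(x,S_p):S_{pk}(x)=0\}$ is, up to pullback along $X\times\mathbb{X}_p\to X\times\mathbb{P}H^0_{(2)}(X,L_k^p)$, the zero locus of the tautological section of $L_k^p\boxtimes\mathcal{O}(1)$, so by Poincar\'e--Lelong $[\tilde{\Gamma}_{k,p}]=p\,c_1(L_k,h_k)+\pi_{k,p}^{\star}\omega_{FS}$ cohomologically. Because $\Gamma_p=\bigcap_k\tilde{\Gamma}_{k,p}$ attains the expected codimension $m$ (by Proposition 2.4), the intersection is proper and
\begin{equation*}
[\Gamma_p]=\prod_{k=1}^{m}\bigl(p\,c_1(L_k,h_k)+\pi_{k,p}^{\star}\omega_{FS}\bigr).
\end{equation*}

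Then I would apply the graph formula $\Phi_p^{\star}(T)=(\pi_1)_{\star}(\pi_2^{\star}T\wedge[\Gamma_p])$, expand the product over $k$, and use two observations: (a) any $\pi_{k,p}^{\star}\omega_{FS}$ branch wedged with a companion $\pi_{k,p}^{\star}\omega_{FS}^{d_{k,p}}$ vanishes, so only the $p\,c_1(L_k,h_k)$ branch survives in those slots; (b) $(\pi_1)_{\star}$ kills sub-top-degree forms on the $\mathbb{X}_p$-fiber and returns $1$ on the top form $\omega_p^{d_{0,p}}$. For $d_p$ every $k$-factor is paired with $\omega_{FS}^{d_{k,p}}$, forcing $\Phi_p^{\star}(\omega_p^{d_{0,p}})=p^m\bigwedge_k c_1(L_k,h_k)$, and integrating against $\omega^{n-m}$ yields the first formula. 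For $\delta_p$, within the $k_0$-summand exactly the $k_0$-th factor of $[\Gamma_p]$ retains both branches: the $p\,c_1(L_{k_0})$ branch leaves a sub-top form on $\mathbb{X}_p$ whose pushforward is $0$, while the $\pi_{k_0,p}^{\star}\omega_{FS}$ branch completes the top form and contributes $p^{m-1}\bigwedge_{l\neq k_0}c_1(L_l,h_l)$; assembling with the multinomial coefficients and integrating against $\omega^{n-m+1}$ recovers the claimed $\delta_p$.

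The main technical concern is that the $c_1(L_k,h_k)$ are currents, possibly singular along $A_k$, so the wedge products $\bigwedge_k c_1(L_k,h_k)$ and $\bigwedge_{l\neq k}c_1(L_l,h_l)$ must actually exist as positive closed currents; this is precisely what Proposition 2.1 guarantees under the general-position hypothesis, after which the masses of these currents coincide with the numerical intersection numbers computed in the cohomological argument above.
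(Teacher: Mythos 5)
Your computation is correct and is exactly the ``classical cohomological argument'' the paper invokes by citing \cite[Lemma 4.4]{cmn}: multinomial expansion of $\omega_p^{d_{0,p}}$ and $\omega_p^{d_{0,p}-1}$ using (19), identification of the class of the incidence variety via Poincar\'e--Lelong, and pushforward, with the masses finally identified through the cohomological equivalence $\{\gamma_{k,p}\}=p\{c_1(L_k,h_k)\}$. The only cosmetic caveat is that equalities such as $\Phi_p^{\star}(\omega_p^{d_{0,p}})=p^m\bigwedge_k c_1(L_k,h_k)$ hold at the level of cohomology classes (the actual current is $\gamma_{1,p}\wedge\dots\wedge\gamma_{m,p}$, cf.\ Proposition 3.9), which suffices since $d_p$ and $\delta_p$ are integrals against closed forms.
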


\begin{remark}
Lemma 3.6 implies that $\delta_{p}\leq Cp^{m-1}$ for some constant $C>0$ which depends on $(L_{k},h_{k}), 1\leq k\leq m$.
\end{remark}

Recall that $\gamma_{k,p}=\Phi_{k,p}^{\star}(\omega_{FS})$ is the Fubini-Study current in (11).
\begin{proposition} \emph{\cite[Lemma 4.5]{cmn}}
$\Phi_{p}^{\star}(\omega_{p}^{d_{0,p}})=\gamma_{1,p}\wedge...\wedge\gamma_{m,p}$ for all $p$ sufficiently large.
\end{proposition}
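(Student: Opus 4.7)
The plan is to reduce the claim to a one-variable Crofton-type identity on each factor $\mathbb{P}H_{(2)}^{0}(X,L_{k}^{p})$, after identifying $\omega_{p}^{d_{0,p}}$ as a product measure on $\mathbb{X}_{p}$.

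First I would expand $\omega_{p}^{d_{0,p}}$ via the multinomial formula. Since $\pi_{k,p}^{\star}\omega_{FS}$ is pulled back from a base of dimension $d_{k,p}$, its powers above $d_{k,p}$ vanish, and the constraint $\sum_{k}j_{k}=d_{0,p}=\sum_{k}d_{k,p}$ forces the unique surviving multi-index $(d_{1,p},\dots,d_{m,p})$. Combining the corresponding multinomial coefficient with the normalisation (19) collapses the expansion to
\begin{equation*}
\omega_{p}^{d_{0,p}}=\bigwedge_{k=1}^{m}\pi_{k,p}^{\star}(\omega_{FS}^{d_{k,p}}),
\end{equation*}
i.e.\ the product on $\mathbb{X}_{p}$ of the normalised Fubini--Study probability measures on the factors.

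Next I would apply the Fubini-type description of the pullback of a measure by a meromorphic transform. Writing $\omega_{p}^{d_{0,p}}=\int_{\mathbb{X}_{p}}\delta_{S_{p}}\,d\omega_{p}^{d_{0,p}}(S_{p})$ and invoking Lemma 3.4 together with Remark 3.5 gives
\begin{equation*}
\Phi_{p}^{\star}(\omega_{p}^{d_{0,p}})=\int_{\mathbb{X}_{p}}[S_{p1}=0]\wedge\dots\wedge[S_{pm}=0]\,d\omega_{p}^{d_{0,p}}(S_{p}),
\end{equation*}
where the integrand is defined off a null set by Proposition 2.4 for $p$ sufficiently large. Thanks to the product structure established above, I would then integrate out the $m$ factors one at a time via the single-bundle identity
\begin{equation*}
\int_{\mathbb{P}H_{(2)}^{0}(X,L_{k}^{p})}[S=0]\,d\omega_{FS}^{d_{k,p}}(S)=\gamma_{k,p},
\end{equation*}
which itself is the $m=1$ case of the same disintegration argument combined with the classical fact that on $\mathbb{P}^{d_{k,p}}$ the average of hyperplane divisors against $\omega_{FS}^{d_{k,p}}$ equals $\omega_{FS}$, pulled back through the Kodaira map. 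Iterating produces $\gamma_{1,p}\wedge\dots\wedge\gamma_{m,p}$.

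The main obstacle is legitimising the interchange of the iterated integration with the wedge product of currents of integration. One must verify that, for $(S_{pi})_{i\neq k}$ generic, the assignment $S_{pk}\mapsto [S_{p1}=0]\wedge\dots\wedge [S_{pm}=0]$ is a measurable family of positive $(m,m)$-currents whose $\omega_{FS}^{d_{k,p}}$-integral coincides with the same wedge after replacing $[S_{pk}=0]$ by $\gamma_{k,p}$. This rests on the general position statement of Proposition 2.1 (available for $p$ large) together with the continuity properties of intersections of currents of integration along analytic subsets in general position, as provided by \cite{de} and already exploited in Section 2; the hypothesis that $p$ be sufficiently large enters precisely to keep all the intermediate wedges well defined.
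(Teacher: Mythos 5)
Your argument is correct and is essentially the proof of the result being quoted: the paper itself gives no proof of this proposition, citing it directly as Lemma 4.5 of \cite{cmn}, and the proof there proceeds exactly as you do --- the multinomial expansion collapses $\omega_{p}^{d_{0,p}}$ to the product of the normalized Fubini--Study volumes via (19), and the Crofton-type identity $\int[S=0]\,d\omega_{FS}^{d_{k,p}}(S)=\Phi_{k,p}^{\star}(\omega_{FS})=\gamma_{k,p}$ is applied factor by factor, with general position (Propositions 2.1 and 2.4) and Demailly's intersection theory legitimising the superposition of the wedge products. No gap to report.
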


We recall the construction of moderate measures in the settings of Theorem 1.1.
Consider the functions $u_{j}:\mathbb{P}^{N}\rightarrow\mathbb{R}$, $1\leq j\leq N$. Fix an exponent $0<\rho<1$.
Let $\{u_{j}\}_{j=1}^{N}$ be a family of $(c_{N},\rho)$-functions defined above (7),
where $\{c_{N}\}_{N=1}^{\infty}$ is a sequence of positive numbers.
Set
\begin{equation}
\sigma_{N}:=\wedge_{j=1}^{N}(dd^{c}u_{j}+\omega_{FS}).
\end{equation}
This is a probability measure on $\mathbb{P}^{N}$.
We restate the following result from \cite[Remark 2.12]{sh}.
It shows that $\sigma_{N}$ is a moderate measure for suitable $c_{N}$ depending only on $\rho$ and $N$
(e.g. $c_{N}=O(1/c^{N})$, where the constant $c>1$ depends only on $\rho$).
\begin{proposition}
In the above setting, there exists a constant $0<c_{N}<1$ for the measure $\sigma_{N}$
which depends only on $\rho$ and $N$ such that
\begin{equation}
\int_{\mathbb{P}^{N}}\exp(-\alpha_{0}(\frac{\rho}{4})^{N}\phi)d\sigma_{N}\leq \beta_{0}N
\end{equation}
for all $\phi\in\mathcal{F}$,
where $\alpha_{0}<1, \beta_{0}$ are universal positive constants, and where $\mathcal{F}$ is defined in (4)
for $M=\mathbb{P}^{N}$ with the normalized Fubini-Study form.
That is to say, $\sigma_{N}$ is $(\beta_{0}N,\alpha_{0}(\frac{\rho}{4})^{N})$-moderate.
\end{proposition}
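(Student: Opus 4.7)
The plan is to prove this by induction on the dimension $N$, following the strategy from \cite{sh}, with careful tracking of how both the exponent in the exponential and the constant $c_N$ evolve at each step.

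For the base case $N=1$, the measure $\sigma_1 = dd^c u_1 + \omega_{FS}$ has a bounded density with respect to $\omega_{FS}$, since $u_1$ is of class $\mathscr{C}^\rho$ with modulus $c_1$ and hence bounded on the compact $\mathbb{P}^1$. The moderateness of the normalized Fubini-Study volume form on $\mathbb{P}^1$, a classical consequence of Skoda's exponential integrability theorem for q.p.s.h.\ functions, then yields the estimate with universal constants, with room to spare in the exponent.

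For the inductive step, I would decompose $\sigma_N = (dd^c u_N + \omega_{FS}) \wedge \sigma_{N-1}''$ with $\sigma_{N-1}'' := \bigwedge_{j=1}^{N-1}(dd^c u_j + \omega_{FS})$, and fix $\phi \in \mathcal{F}$. The piece $\omega_{FS} \wedge \sigma_{N-1}''$ is analyzed by slicing along a generic pencil of hyperplanes $\mathbb{P}^{N-1} \hookrightarrow \mathbb{P}^N$ and a Fubini-type argument, reducing it to a $\sigma_{N-1}''$-type integral on $\mathbb{P}^{N-1}$ to which the inductive hypothesis applies. The piece $dd^c u_N \wedge \sigma_{N-1}''$ is more delicate: after applying Stokes' theorem to move the $dd^c$ onto $\exp(-\alpha\phi)$, the resulting integrand $e^{-\alpha\phi}(\alpha^2 d\phi\wedge d^c\phi - \alpha\,dd^c\phi)$ is controlled by a Chebyshev-type argument on the sublevel sets $\{\phi<-t\}$, where the $L^\infty$-bound on $u_N$ (forced by the modulus $c_N$) provides the needed leverage.

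The loss factor of $\rho/4$ per induction step arises from interpolating the $\mathscr{C}^\rho$-modulus of $u_N$ against the exponential decay of $\sigma_{N-1}''$ on sublevel sets of $\phi$; the factor $1/4$ absorbs universal constants coming from the slicing, the Chebyshev step, and the regularization of $\phi$ needed to apply Stokes. The linear growth $\beta_0 N$ on the right-hand side then accumulates additively from the two pieces at each stage.

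The main obstacle will be the uniform, quantitative bookkeeping of constants so that $c_N$ can be chosen of the form $O(1/c^N)$ with $c>1$ depending only on $\rho$. One must simultaneously ensure that $c_N$ is small enough to control (i) $\|u_N\|_{L^\infty}$, (ii) the $\mathscr{C}^\rho$-modulus contribution in the Chebyshev step, and (iii) the compounding loss of the exponent through the induction, while still keeping the right-hand side linear in $N$. This quantitative tracking, rather than any conceptually new input, is where the real work lies.
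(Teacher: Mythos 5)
First, a point of reference: the paper does not prove this proposition at all --- it is imported verbatim from \cite[Theorem 1.1, Remark 2.12]{sh}, so your proposal is really being measured against the argument there, which in turn rests on the exponential estimates of Dinh--Nguy\^{e}n--Sibony \cite{dns1}. Your overall skeleton --- induction, splitting off one factor $dd^{c}u_{N}+\omega_{FS}$, integration by parts to move $dd^{c}$ onto $e^{-\alpha\phi}$, a Chebyshev argument on sublevel sets, and the smallness of $c_{N}$ absorbing the loss with a geometric decay $(\rho/4)^{N}$ in the exponent --- is the right one and matches the cited proof in spirit.

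Two of your reductions, however, do not work as stated. The base case is justified by the claim that $u_{1}\in\mathscr{C}^{\rho}$, being bounded, forces $dd^{c}u_{1}+\omega_{FS}$ to have bounded density with respect to $\omega_{FS}$; this is false. Already $u(z)=|z|^{\rho}$ is $\mathscr{C}^{\rho}$ and subharmonic with unbounded Laplacian, and equilibrium measures of Cantor-type compacta have H\"{o}lder continuous potentials while being singular with respect to Lebesgue measure. Moderateness of $\sigma_{1}$ is true, but it must be obtained by the same integration-by-parts and exponential-estimate mechanism as the inductive step, not by a density bound. More seriously, your treatment of the term $\omega_{FS}\wedge\sigma_{N-1}''$ by slicing along a pencil of hyperplanes $\mathbb{P}^{N-1}\hookrightarrow\mathbb{P}^{N}$ does not reduce to the inductive hypothesis: $\sigma_{N-1}''$ is an $(N-1,N-1)$-current on $\mathbb{P}^{N}$, and its slices by hyperplanes $H$ are not the measures $\bigwedge_{j\leq N-1}(dd^{c}(u_{j}|_{H})+\omega_{FS}|_{H})$ on $\mathbb{P}^{N-1}$ that the induction covers; restricting wedge products of currents with merely H\"{o}lder potentials to slices is not even well behaved in general. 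The correct induction stays on $\mathbb{P}^{N}$ and runs over the number of perturbed factors: one compares $\bigwedge_{j\leq k}(dd^{c}u_{j}+\omega_{FS})\wedge\omega_{FS}^{N-k}$ with $\bigwedge_{j\leq k-1}(dd^{c}u_{j}+\omega_{FS})\wedge\omega_{FS}^{N-k+1}$, the difference being $dd^{c}u_{k}$ wedged with the remaining factors, and only this difference term requires the Stokes/Chebyshev analysis; the other term is handled directly by the inductive hypothesis on the same space. With that restructuring, and the quantitative bookkeeping you correctly identify as the hard part, the argument closes; as proposed, the two reductions above are genuine gaps.
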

Now we are in a position to prove the important estimates on $R$ and $\Delta$.
\begin{proposition}
Under the above hypotheses, there exist universal positive constants $\beta_{1}, \beta_{2}, \beta_{3}$ such that $\forall t\in\mathbb{R}$,
\begin{equation*}
\begin{split}
R(\mathbb{P}^{N}, \omega_{FS}, \sigma_{N})&\leq \beta_{2}+\frac{1}{2}\log N, \\
\Delta(\mathbb{P}^{N}, \omega_{FS}, \sigma_{N}, t)&\leq \beta_{0}N\exp(-\alpha_{0}t)+
\beta_{1}(\frac{\rho}{4})^{N}\exp(-\alpha_{0}(\frac{\rho}{4})^{N}t). \\
\end{split}
\end{equation*}
When $t\leq \frac{1}{\alpha_{0}}(\log N+N\log\frac{4}{\rho})$, we have
\begin{equation*}
\Delta(\mathbb{P}^{N}, \omega_{FS}, \sigma_{N}, t)\leq \beta_{3}N\exp(-\alpha_{0}t).
\end{equation*}
\end{proposition}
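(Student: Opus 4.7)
The plan is to first establish the tail estimate $\Delta$, then derive the $R$-bound via a comparison with a Fubini--Study reference, and finally obtain the refined small-$t$ version by absorbing the two terms.

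\emph{Tail estimate.} Given $\varphi\in Q(\mathbb{P}^N,\omega_{FS})$ with $\int\varphi\,d\sigma_N=0$, I first normalize by setting $\psi:=\varphi-\max\varphi$; Jensen's inequality forces $\max\varphi\ge 0$, so $\psi\in\mathcal{F}$ and $\{\varphi<-t\}\subset\{\psi<-t\}$. A Chebyshev argument applied to the moderate bound of Proposition 3.9 produces
$$\sigma_N(\{\psi<-t\})\le e^{-\alpha_0(\rho/4)^N t}\int e^{-\alpha_0(\rho/4)^N\psi}\,d\sigma_N\le \beta_0 N\,e^{-\alpha_0(\rho/4)^N t},$$
which (with relabelled constants) gives the second term of the claimed bound. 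The first term, carrying the universal exponent $\alpha_0$, I expect to come from a secondary moderate estimate: the Lebesgue measure $\omega_{FS}^N$ on $\mathbb{P}^N$ is classically $(CN,\alpha_0)$-moderate with $\alpha_0$ independent of $N$, and the product representation $\sigma_N=\bigwedge_{j=1}^N(dd^c u_j+\omega_{FS})$ together with the smallness of the $u_j$ allows me to compare $\sigma_N$ with $\omega_{FS}^N$ and transfer this stronger moderate bound, at the price of the small prefactor $(\rho/4)^N$.

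\emph{The $R$-bound.} Direct integration of the tail via $-\int\varphi\,d\sigma_N=\int_0^\infty \sigma_N(\{\varphi<-s\})\,ds$ would only yield a leading factor $1/\alpha_0>1$ in front of $\log N$ (since Proposition 3.9 requires $\alpha_0<1$), not the sharp $\tfrac12$. I therefore plan to argue by comparison: the classical bound $R(\mathbb{P}^N,\omega_{FS},\omega_{FS}^N)\le C+\tfrac12\log N$ on Fubini--Study measure comes from the explicit evaluation of $\int\log(|l|/|z|)\,\omega_{FS}^N$ for linear forms $l$ and produces exactly the constant $\tfrac12$. Expanding the difference $\sigma_N-\omega_{FS}^N$ using the wedge-product structure and integrating by parts against $\varphi$, the correction splits into finitely many integrals that are each controlled by the $\mathscr{C}^\rho$ and quasi-plurisubharmonic norms of the $u_j$; for $c_N$ small these stay uniformly bounded in $N$ and are absorbed into $\beta_2$.

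\emph{Small-$t$ refinement.} For $t\le\frac{1}{\alpha_0}(\log N+N\log(4/\rho))$ one has $(\rho/4)^N e^{\alpha_0 t}\le N$, which gives $\beta_1(\rho/4)^N e^{-\alpha_0(\rho/4)^N t}\le \beta_1(\rho/4)^N\le \beta_1 N\,e^{-\alpha_0 t}$; summing with the first term collapses the bound to $\beta_3 N\,e^{-\alpha_0 t}$ with $\beta_3=\beta_0+\beta_1$.

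\emph{Main obstacle.} The crux is producing both the first term of $\Delta$ (with universal exponent $\alpha_0$) and the sharp coefficient $\tfrac12$ in the $R$-bound, neither of which is obtainable from Proposition 3.9 alone. Both require quantitatively comparing $\sigma_N$ with $\omega_{FS}^N$ uniformly in $N$, using only that the $u_j$ are $\mathscr{C}^\rho$ with modulus $c_N$ and $c_N\omega_{FS}$-p.s.h.; keeping the correction terms dimension-independent is where the technical work lies.
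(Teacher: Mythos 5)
Your overall architecture (compare $\sigma_N$ with $\omega_{FS}^N$, use the classical bound $R(\mathbb{P}^N,\omega_{FS},\omega_{FS}^N)\leq\frac12(1+\log N)$, Chebyshev for the tail, and the absorption argument for small $t$) is the same as the paper's, and your small-$t$ refinement is correct and in fact cleaner than the paper's phrasing. But there is a genuine error in your derivation of the $\Delta$-bound. Chebyshev applied to the full measure $\sigma_N$ with the exponent $\alpha_0(\rho/4)^N$ from Proposition 3.10 gives $\sigma_N(\psi<-t)\leq\beta_0 N\exp(-\alpha_0(\rho/4)^N t)$, and this is \emph{not} the second term of the claimed bound ``with relabelled constants'': the claimed prefactor is $\beta_1(\rho/4)^N$ with $\beta_1$ universal, whereas yours is $\beta_0 N$, larger by the factor $N(4/\rho)^N$. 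That exponentially small prefactor is essential both for the absorption step and for the later use of this proposition (e.g.\ the convergence of $\int_{t_0}^{\infty}(\rho/4)^{\ell_j}\exp(\cdot)\,dt$ in Proposition 3.13), so it cannot be waved away. The paper obtains it by writing $\sigma_N\leq\mu_{1,N}+\mu_{2,N}$ with $\mu_{1,N}=\omega_{FS}^N$ and $\mu_{2,N}=\bigwedge_{j=1}^N(dd^cu_j+c_N\omega_{FS}+\omega_{FS})-\omega_{FS}^N$ — note the added $c_N\omega_{FS}$, which makes $\mu_{2,N}$ a \emph{positive} measure (your $\sigma_N-\omega_{FS}^N$ is not) — and then invoking the key input you are missing, namely \cite[Proposition 2.11]{sh}: $\int\exp(-\alpha_0(\rho/4)^N\phi)\,d\mu_{2,N}\leq\beta_1(\rho/4)^N$ for all $\phi\in\mathcal{F}$. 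Chebyshev on $\mu_{1,N}$ (which is $(\beta_0N,\alpha_0)$-moderate with universal $\alpha_0$) gives the first term, and Chebyshev on $\mu_{2,N}$ with this estimate gives the second term with the correct prefactor. You do gesture at the right mechanism (``at the price of the small prefactor $(\rho/4)^N$''), but you attach it to the wrong term and never produce the estimate that delivers it.

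The same missing estimate also streamlines your $R$-bound. Your plan to expand $\sigma_N-\omega_{FS}^N$ via the wedge-product structure and integrate by parts is problematic: the expansion has on the order of $2^N$ cross terms, and controlling them uniformly in $N$ from the $\mathscr{C}^\rho$ data alone is exactly the hard content of \cite[Proposition 2.11]{sh}, not something to be absorbed in passing. The paper instead uses positivity of $\mu_{2,N}$ together with $-\int\phi\,d\mu_{2,N}\leq(\alpha_0(\rho/4)^N)^{-1}\int\exp(-\alpha_0(\rho/4)^N\phi)\,d\mu_{2,N}\leq\beta_1/\alpha_0$ for $\phi\in\mathcal{F}$, which adds only an $O(1)$ correction to $\frac12(1+\log N)$; no integration by parts is needed. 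In short: your skeleton is right, but the proof stands or falls on the single exponential estimate for the positive difference measure $\mu_{2,N}$, which your write-up neither states nor proves.
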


\begin{proof}
By \cite[Proposition A.3]{ds1},
\begin{equation*}
R(\mathbb{P}^{N}, \omega_{FS}, \omega_{FS}^{N})\leq \frac{1}{2}(1+\log N).
\end{equation*}
We write
\begin{equation*}
\mu_{1,N}=\omega_{FS}^{N}, \quad \mu_{2,N}=\bigwedge_{j=1}^{N}(dd^{c}u_{j}+c_{N}\omega_{FS}+\omega_{FS})-\omega_{FS}^{N}.
\end{equation*}
The measure $\mu_{2,N}$ is positive since $u_{j}$ is $c_{N}\omega_{FS}$-p.s.h..
Recall that $\sigma_{N}$ is defined in (20), we have $\sigma_{N}\leq \mu_{1,N}+\mu_{2,N}$.
By \cite[Proposition 2.11]{sh},
there exists a universal positive constant $\beta_{1}$ such that for all $\phi\in\mathcal{F}$,
\begin{equation}
\int_{\mathbb{P}^{N}}\exp(-\alpha_{0}(\frac{\rho}{4})^{N}\phi)d\mu_{2,N}\leq \beta_{1}(\frac{\rho}{4})^{N}.
\end{equation}
Recall that $r(\mathbb{P}^{N}, \omega_{FS})=1$. It follows from the definitions of $R(\mathbb{P}^{N}, \omega_{FS}, \sigma_{N})$
in (15) and $\mathcal{F}$ in (4) that
\begin{equation*}
R(\mathbb{P}^{N}, \omega_{FS}, \sigma_{N})
=\sup_{\phi\in\mathcal{F}}\bigl\{-\int\phi d\sigma_{N}\bigr\}.
\end{equation*}
Since $\phi\leq 0, \sigma_{N}\leq \mu_{1,N}+\mu_{2,N}$ and $\sigma_{N}, \mu_{1,N}, \mu_{2,N}$ are all positive measures,
we have
\begin{equation*}
\begin{split}
R(\mathbb{P}^{N}, \omega_{FS}, \sigma_{N})
&\leq\sup_{\phi\in\mathcal{F}}\bigl\{-\int\phi d\mu_{1,N}-\int\phi d\mu_{2,N}\bigr\} \\
& \leq R(\mathbb{P}^{N}, \omega_{FS}, \omega_{FS}^{N})+\sup_{\phi\in\mathcal{F}}\bigl\{
\int\exp(-\alpha_{0}(\frac{\rho}{4})^{N}\phi)d\mu_{2,N}/(\alpha_{0}(\frac{\rho}{4})^{N}) \bigr\} \\
& \leq \frac{1}{2}(1+\log N)+\frac{\beta_{1}}{\alpha_{0}}\leq \beta_{2}+\frac{1}{2}\log N, \\
\end{split}
\end{equation*}
where the third inequality is deduced from (22) and \cite[Proposition 2.2]{sh}.
It follows from \cite[Inequality (13)]{sh} that
\begin{equation*}
\Delta(\mathbb{P}^{N}, \omega_{FS}, \sigma_{N}, t)
\leq \beta_{0}N\exp(-\alpha_{0}t)+
\beta_{1}(\frac{\rho}{4})^{N}\exp(-\alpha_{0}(\frac{\rho}{4})^{N}t).
\end{equation*}
Let
\begin{equation*}
N\exp(-\alpha_{0}t)=(\frac{\rho}{4})^{N}\exp(-\alpha_{0}(\frac{\rho}{4})^{N}t),
\end{equation*}
then
\begin{equation*}
t=\frac{\log N+N\log\frac{4}{\rho}}{\alpha_{0}(1-(\frac{\rho}{4})^{N})}.
\end{equation*}
Hence
\begin{equation*}
\Delta(\mathbb{P}^{N}, \omega_{FS}, \sigma_{N}, t)\leq \beta_{3}N\exp(-\alpha_{0}t),
\end{equation*}
when $t\leq \frac{1}{\alpha_{0}}(\log N+N\log\frac{4}{\rho})$.
This completes the proof.
\end{proof}

Now we study the estimates on multi-projective spaces.
Let $\mathbb{P}^{\ell_{1}},...,\mathbb{P}^{\ell_{m}}$ be $m$ projective spaces.
Let $\pi_{k}: \mathbb{P}^{\ell_{1}}\times...\times\mathbb{P}^{\ell_{m}}\rightarrow \mathbb{P}^{\ell_{k}}$
be the natural projection map.
Let $\sigma_{k}$ be a probability moderate measure with respect to a family of $(c_{\ell_{k}},\rho)$-functions
$\{u_{k,j}\}_{j=1}^{\ell_{k}}$ on $\mathbb{P}^{\ell_{k}}$ defined above (7). In the sequel of this section, $c_{\ell_{k}}$ is always chosen
such that the probability measure $\sigma_{k}$ satisfies the property of Proposition 3.10 (hence Proposition 3.11).
Let $\ell=\ell_{1}+...+\ell_{m}$ and
\begin{equation*}
\omega_{MP}:=c_{1m}(\pi_{1}^{\star}(\omega_{FS})+...+\pi_{m}^{\star}(\omega_{FS})), \quad c_{1m}^{-\ell}=\frac{\ell!}{\ell_{1}!...\ell_{m}!}.
\end{equation*}
It is equivalent to that $\omega_{MP}^{\ell}$ is a probability measure.
Recall that the notation $r(\mathbb{P}^{\ell_{1}}\times...\times \mathbb{P}^{\ell_{m}}, \omega_{MP})$
is defined after Lemma 3.1. We have the following lemma \cite[Lemma 4.6]{cmn}.
\begin{lemma}
Under the above hypotheses,
\begin{equation*}
r(\mathbb{P}^{\ell_{1}}\times...\times \mathbb{P}^{\ell_{m}}, \omega_{MP})\leq r(\ell_{1},...\ell_{m}):=\max_{1\leq k\leq m}\frac{\ell}{\ell_{k}}.
\end{equation*}
\end{lemma}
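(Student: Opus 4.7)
The strategy is cohomological: using the K\"unneth decomposition $H^{1,1}(\mathbb{P}^{\ell_{1}} \times \cdots \times \mathbb{P}^{\ell_{m}}, \mathbb{R}) = \bigoplus_{k} \mathbb{R}\,\pi_{k}^{\star}\{\omega_{FS}\}$, I would write the cohomology class of $T$ as $\{T\} = \sum_{k=1}^{m} a_{k}\, \pi_{k}^{\star}\{\omega_{FS}\}$, define the smooth form
\[
\alpha := -\sum_{k=1}^{m} a_{k}\, \pi_{k}^{\star}\omega_{FS},
\]
which depends only on $\{T\}$, and invoke the $\partial\overline\partial$-lemma on this compact K\"ahler manifold to produce $\varphi$ with $dd^{c}\varphi - T = \alpha$. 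Since $dd^{c}\varphi + \sum_{k} a_{k}\pi_{k}^{\star}\omega_{FS} = T \geq 0$, the function $\varphi$ is automatically quasi-plurisubharmonic once the coefficients $a_{k}$ are bounded.

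Two quantitative inputs feed into this. First, $a_{k} \geq 0$: the integral of $T$ against the positive closed smooth form $\pi_{j}^{\star}(\omega_{FS})^{\ell_{j}-1} \wedge \bigwedge_{k \neq j} \pi_{k}^{\star}(\omega_{FS})^{\ell_{k}}$ is a positive measure whose total mass, computed cohomologically via $\int \bigwedge_{k} \pi_{k}^{\star}(\omega_{FS})^{\ell_{k}} = 1$, equals $a_{j}$. Second, the mass-$1$ hypothesis converts into a linear constraint on the $a_{k}$: the multinomial expansion of $\omega_{MP}^{\ell - 1} = c_{1m}^{\ell-1}(\sum_{k}\pi_{k}^{\star}\omega_{FS})^{\ell-1}$ shows that only the pure top-power monomial survives after wedging with $\pi_{j}^{\star}\omega_{FS}$, giving $\int \pi_{j}^{\star}\omega_{FS} \wedge \omega_{MP}^{\ell-1} = \ell_{j}/(\ell\, c_{1m})$ when combined with $c_{1m}^{-\ell} = \ell!/(\ell_{1}!\cdots\ell_{m}!)$. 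Together with $\int T \wedge \omega_{MP}^{\ell-1} = 1$, this yields
\[
\sum_{k=1}^{m} a_{k}\,\ell_{k} = \ell\, c_{1m}.
\]

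The sandwich $-r(\ell_{1},\ldots,\ell_{m})\omega_{MP} \leq \alpha \leq r(\ell_{1},\ldots,\ell_{m})\omega_{MP}$ then drops out by elementary estimates. The upper inequality is immediate from $a_{k} \geq 0$, giving $\alpha \leq 0 \leq r\omega_{MP}$. For the lower inequality, each $a_{j}$ satisfies $a_{j}\ell_{j} \leq \sum_{k} a_{k}\ell_{k} = \ell c_{1m}$, so $a_{j} \leq c_{1m}(\ell/\ell_{j}) \leq c_{1m}\, r(\ell_{1},\ldots,\ell_{m})$ for every $j$; summing and using $\omega_{MP} = c_{1m}\sum_{k}\pi_{k}^{\star}\omega_{FS}$ produces $\sum_{k} a_{k}\pi_{k}^{\star}\omega_{FS} \leq r(\ell_{1},\ldots,\ell_{m})\omega_{MP}$, equivalent to $\alpha \geq -r(\ell_{1},\ldots,\ell_{m})\omega_{MP}$. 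The only subtlety is ensuring the $\partial\overline\partial$-lemma delivers a genuinely quasi-plurisubharmonic $\varphi$ rather than only a distributional potential; this is standard on compact K\"ahler manifolds, since the closed $(1,1)$-current $T - \sum_{k} a_{k}\pi_{k}^{\star}\omega_{FS}$ is cohomologous to zero with positive part $T$ dominated by its smooth cohomological representative, so the Demailly version of the lemma applies.
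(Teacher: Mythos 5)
Your argument is correct, and it is essentially the same cohomological argument as in the source the paper cites for this lemma (\cite[Lemma 4.6]{cmn}, which the paper invokes without reproducing a proof): decompose $\{T\}=\sum_k a_k\pi_k^{\star}\{\omega_{FS}\}$ via K\"unneth, note $a_k\geq 0$ by pairing with positive forms, use the mass normalization to get $\sum_k a_k\ell_k=\ell c_{1m}$, and conclude $-r\omega_{MP}\leq\alpha\leq r\omega_{MP}$ with $r=\max_k\ell/\ell_k$. Your intermediate computations ($\int\pi_j^{\star}\omega_{FS}\wedge\omega_{MP}^{\ell-1}=\ell_j/(\ell c_{1m})$ and the resulting bound $a_j\leq c_{1m}\ell/\ell_j$) check out, and the $\partial\overline\partial$-lemma step producing a q.p.s.h.\ potential is standard as you say.
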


We first consider the case when $m=2$.
The corresponding result of estimates in a simpler case was proved in \cite[Proposition A.8]{ds1}.
Set $\omega_{12}:=\omega_{MP}$ as the the K\"{a}hler form on $\mathbb{P}^{\ell_{1}}\times\mathbb{P}^{\ell_{2}}$.
Denote by $\sigma$ the product of $\sigma_{1}$ and $\sigma_{2}$.
Write $r:=r(\mathbb{P}^{\ell_{1}}\times\mathbb{P}^{\ell_{2}}, \omega_{12})$.
Lemma 3.12 guarantees the existence of sufficiently large $\ell_{1}, \ell_{2}$ such that
\begin{equation*}
\frac{r\log(\ell_{1}+\ell_{2})}{\min(\ell_{1}, \ell_{2})}\ll 1.
\end{equation*}
\begin{proposition}
In the above setting, let $\mathbb{P}^{\ell_{1}}$ (resp. $\mathbb{P}^{\ell_{2}}$) be a projective space endowed
with a probability moderate measure $\sigma_{1}$ (resp. $\sigma_{2}$) satisfying Proposition 3.11.
Suppose that $\ell_{1}, \ell_{2}$ are chosen sufficiently large such that
\begin{equation}
\begin{split}
\frac{r\log(\ell_{1}+\ell_{2})}{\min(\ell_{1}, \ell_{2})}&\ll 1, \\
(\frac{\rho}{4})^{\min(\ell_{1}, \ell_{2})}(\ell_{1}+\ell_{2})&\ll 1. \\
\end{split}
\end{equation}
Then there exist universal positive constants $\beta_{4}, \beta_{5}$ such that for $0\leq t\leq \min(\ell_{1}, \ell_{2})$,
we have
\begin{equation}
\begin{split}
\Delta(\mathbb{P}^{\ell_{1}}\times\mathbb{P}^{\ell_{2}}, \omega_{12}, \sigma,t)&\leq
\beta_{4}(\ell_{1}+\ell_{2})^{1+\frac{\alpha_{0}}{2}}\exp(-\frac{\alpha_{0}}{2r}t), \\
R(\mathbb{P}^{\ell_{1}}\times\mathbb{P}^{\ell_{2}}, \omega_{12}, \sigma)
&\leq \beta_{5}r(1+\log (\ell_{1}+\ell_{2})). \\
\end{split}
\end{equation}
\end{proposition}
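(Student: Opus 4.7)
The plan is to reduce the product estimate on $\mathbb{P}^{\ell_{1}}\times\mathbb{P}^{\ell_{2}}$ to the single-factor bounds of Proposition 3.11 via a slicing argument, following \cite[Proposition A.8]{ds1}. Let $\varphi\in Q(\mathbb{P}^{\ell_{1}}\times\mathbb{P}^{\ell_{2}},\omega_{12})$, so $dd^{c}\varphi\ge -rc_{12}(\pi_{1}^{\star}\omega_{FS}+\pi_{2}^{\star}\omega_{FS})$; in particular, for each fixed $x_{1}$ the slice $\varphi(x_{1},\cdot)$ is $rc_{12}\omega_{FS}$-p.s.h.\ on $\mathbb{P}^{\ell_{2}}$, and symmetrically in $x_{1}$. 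I set
\[
\varphi_{1}(x_{1}):=\max_{x_{2}\in\mathbb{P}^{\ell_{2}}}\varphi(x_{1},x_{2}),
\]
which, as the upper envelope of the $rc_{12}\omega_{FS}$-p.s.h.\ family $\{\varphi(\cdot,x_{2})\}_{x_{2}}$ on the compact $\mathbb{P}^{\ell_{2}}$, is u.s.c.\ and $rc_{12}\omega_{FS}$-p.s.h.\ on $\mathbb{P}^{\ell_{1}}$. The residual $\varphi(x_{1},x_{2})-\varphi_{1}(x_{1})\le 0$ has $\max_{x_{2}}=0$ for each $x_{1}$ and is $rc_{12}\omega_{FS}$-p.s.h.\ in $x_{2}$. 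Assuming $\max\varphi=0$, both $\varphi_{1}/(rc_{12})$ and (for each $x_{1}$) $(\varphi-\pi_{1}^{\star}\varphi_{1})/(rc_{12})$ belong to $\mathcal{F}(\mathbb{P}^{\ell_{k}})$, since $r(\mathbb{P}^{\ell_{k}},\omega_{FS})=1$.

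For the $R$-estimate, I normalize $\max\varphi=0$ and split
\[
-\!\int\!\varphi\,d\sigma = -\!\int_{\mathbb{P}^{\ell_{1}}}\!\varphi_{1}\,d\sigma_{1}+\int_{\mathbb{P}^{\ell_{1}}}\!\!\Bigl(-\!\int_{\mathbb{P}^{\ell_{2}}}(\varphi-\pi_{1}^{\star}\varphi_{1})\,d\sigma_{2}\Bigr)d\sigma_{1}.
\]
After dividing by $rc_{12}$, Proposition 3.11 bounds each summand by $rc_{12}(\beta_{2}+\tfrac{1}{2}\log\ell_{k})$; summing and using $c_{12}\in[c_{0},1]$ (Lemma 3.6 gives the lower bound) yields $R(\mathbb{P}^{\ell_{1}}\times\mathbb{P}^{\ell_{2}},\omega_{12},\sigma)\le\beta_{5}r(1+\log(\ell_{1}+\ell_{2}))$.

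For the $\Delta$-estimate, I first reduce to $\max\varphi=0$: if $\int\varphi\,d\sigma=0$ then $M:=\max\varphi\ge 0$, and $\{\varphi<-t\}\subset\{\varphi-M<-t\}$. The inclusion $\{\varphi<-t\}\subset\{\varphi_{1}<-t/2\}\cup\{\varphi-\pi_{1}^{\star}\varphi_{1}<-t/2\}$ combined with Fubini yields
\[
\sigma(\varphi<-t)\le\sigma_{1}\bigl(\tfrac{\varphi_{1}}{rc_{12}}<-\tfrac{t}{2rc_{12}}\bigr)+\int_{\mathbb{P}^{\ell_{1}}}\sigma_{2}\bigl(\tfrac{\varphi-\pi_{1}^{\star}\varphi_{1}}{rc_{12}}<-\tfrac{t}{2rc_{12}}\bigr)d\sigma_{1}.
\]
To apply the $\Delta$-bound of Proposition 3.11 (whose definition uses $\int\psi\,d\sigma_{k}=0$, not $\max\psi=0$), I translate each factor-slice $\psi\in\mathcal{F}(\mathbb{P}^{\ell_{k}})$ by at most $R(\mathbb{P}^{\ell_{k}},\omega_{FS},\sigma_{k})\le\beta_{2}+\tfrac{1}{2}\log\ell_{k}$, producing the extra polynomial prefactor $\exp(\alpha_{0}R)\asymp\ell_{k}^{\alpha_{0}/2}$. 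The first hypothesis of (23) ensures that $t/(2rc_{12})$ stays in the regime $s\le\alpha_{0}^{-1}(\log\ell_{k}+\ell_{k}\log(4/\rho))$ where Proposition 3.11 furnishes the sharper bound $\beta_{3}\ell_{k}\exp(-\alpha_{0}s)$, while the second hypothesis of (23) kills the secondary $\beta_{1}(\rho/4)^{\ell_{k}}$ term. Using $c_{12}\in[c_{0},1]$ to replace $\exp(-\alpha_{0}t/(2rc_{12}))$ by $\exp(-\alpha_{0}t/(2r))$ and summing over $k=1,2$ yields the announced $\beta_{4}(\ell_{1}+\ell_{2})^{1+\alpha_{0}/2}\exp(-\alpha_{0}t/(2r))$.

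The main obstacle is the bookkeeping of the three scaling parameters $r$, $c_{12}$, $\rho$: specifically, verifying that $t\le\min(\ell_{1},\ell_{2})$ together with the two hypotheses (23) genuinely place $t/(2rc_{12})$ into the admissible regime of the sharper single-factor $\Delta$-estimate, and tracking the polynomial factor $\ell_{k}^{\alpha_{0}/2}$ that arises from the shift between the $\max\psi=0$ and $\int\psi\,d\sigma_{k}=0$ normalizations. The envelope step identifying $\varphi_{1}$ as a u.s.c., $rc_{12}\omega_{FS}$-p.s.h.\ function on $\mathbb{P}^{\ell_{1}}$ is standard.
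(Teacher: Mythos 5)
Your argument is correct and rests on the same two single-factor inputs as the paper (the $R$- and $\Delta$-bounds of Proposition 3.11, combined by slicing and Fubini), but the execution differs in two genuine ways. First, where you decompose $\varphi=\pi_{1}^{\star}\varphi_{1}+(\varphi-\pi_{1}^{\star}\varphi_{1})$ using the upper envelope $\varphi_{1}(x_{1})=\max_{x_{2}}\varphi(x_{1},x_{2})$, the paper instead fixes a point $(a,b)$ with $\psi(a,b)=0$ and covers $\{\psi<-t\}$ by $\pi_{2}^{-1}(F)\cup E'$, where $F=\{x_{2}:\psi(a,x_{2})<-t/2\}$ and $E'$ collects the fiberwise bad sets over $X_{2}\setminus F$; your covering $\{\varphi<-t\}\subset\{\pi_{1}^{\star}\varphi_{1}<-t/2\}\cup\{\varphi-\pi_{1}^{\star}\varphi_{1}<-t/2\}$ plays exactly the role of that splitting, and the envelope has the advantage that the residual slice automatically has fiberwise maximum $0$, whereas the paper only gets $\max_{X_{1}}\psi_{3}\geq -t/2$ and must carry the extra $t/2$ through the recentering. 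Second, for the $R$-estimate the paper integrates the tail distribution $\int_{0}^{\infty}\sigma(\psi\leq -t)\,dt$, splits at an optimized cutoff $\tilde t$, and needs the hypotheses (23) to place $\tilde t\leq t_{0}$ and to kill the $(\rho/4)^{\ell_{j}}$ tail; your direct splitting $-\int\varphi\,d\sigma=-\int\varphi_{1}\,d\sigma_{1}+\int(-\int(\varphi-\pi_{1}^{\star}\varphi_{1})\,d\sigma_{2})\,d\sigma_{1}$ bounds each piece by the single-factor $R$ at once and does not use (23) at all, which is simpler. Both routes are standard variants of \cite[Proposition A.8]{ds1}.

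One caveat on the bookkeeping you flag: normalizing the slices by $rc_{12}$ rather than by $r$ makes the argument of the single-factor $\Delta$ equal to $t/(2rc_{12})-R$, and the admissibility of the sharper regime in Proposition 3.11 is then not ensured by the first hypothesis of (23) (which controls $r\log(\ell_{1}+\ell_{2})/\min(\ell_{1},\ell_{2})$, not the size of $t/c_{12}$) but by $t\leq\min(\ell_{1},\ell_{2})$, $r\geq 1$, $\alpha_{0}<1$, $\log(4/\rho)>1$ together with a lower bound on $c_{12}$; for $m=2$ one has $c_{12}^{-\ell}=\binom{\ell}{\ell_{1}}\leq 2^{\ell}$, hence $c_{12}\geq 1/2$, and the check goes through (Lemma 3.6 concerns the specific constants $c_{0,p}$, so it is not quite the right citation here). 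The cleaner fix, which is what the paper implicitly does, is to use $c_{12}\leq 1$ from the outset and treat each slice merely as $r\omega_{FS}$-p.s.h.; then the exponent is $\exp(-\alpha_{0}t/(2r))$ directly and the regime condition reduces to $t\leq\ell_{k}$.
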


\begin{proof}
To simplify the notations, let $X_{1}=\mathbb{P}^{\ell_{1}}, X_{2}=\mathbb{P}^{\ell_{2}}, X=X_{1}\times X_{2}$.
Denote by $\omega_{1}$ (resp. $\omega_{2}$) the normalized Fubini-Study form $\omega_{FS}$ in $\mathbb{P}^{\ell_{1}}$
(resp. $\mathbb{P}^{\ell_{2}}$).
Consider a function $\psi$ on $X$ with the conditions that $\max_{X}\psi=0, dd^{c}\psi\geq -r\omega_{12}$.
Fix a point $(a,b)$ such that $\psi(a,b)=0$. Let $E$ be the set of all points with $\psi<-t$ for $t\geq 0$.
We write $E=(\psi<-t)$. Set
\begin{equation*}
\begin{split}
F:&=\{x_{2}\in X_{2}, \psi(a, x_{2})<-t/2\}, \\
E_{x_{2}}:&=\{x_{1}\in X_{1}, \psi(x_{1}, x_{2})<-t\}. \\
\end{split}
\end{equation*}
Define
\begin{equation*}
E':=\bigcup_{x_{2}\in X_{2}\setminus F}(E_{x_{2}}\times\{x_{2}\}).
\end{equation*}
Note that $E\subset\pi_{2}^{-1}(F)\cup E'$.

We first estimate the measure of $\pi_{2}^{-1}(F)$. Let $\psi_{1}(x_{2}):=\psi(a, x_{2})$,
then $\max_{X_{2}}\psi_{1}=\psi_{1}(b)=0$. Define a new function $\psi_{2}:=\psi_{1}-\int\psi_{1}d\sigma_{2}$.
Note that
\begin{equation*}
\int\psi_{2}d\sigma_{2}=0, \quad \psi_{2}\geq \psi_{1} \quad dd^{c}\psi_{2}\geq -r\omega_{2}.
\end{equation*}
Let $R:=\beta_{2}+\frac{1}{2}\log(\ell_{1}+\ell_{2})$.
Since $r(X_{2},\omega_{2})=1$, then by Proposition 3.11, we have
\begin{equation*}
-\int\psi_{1}d\sigma_{2}=\max_{X_{2}}\psi_{2}\leq rR(X_{2},\omega_{2},\sigma_{2})\leq rR.
\end{equation*}
Hence by hypotheses,
\begin{equation}
\begin{split}
\sigma_{2}(F)&\leq\sigma_{2}(\psi_{2}\leq rR-t/2) \\
& =\sigma_{2}(r^{-1}\psi_{2}\leq R-r^{-1}t/2)\leq\Delta(X_{2},\omega_{2},\sigma_{2},r^{-1}t/2-R) \\
& \leq\beta_{0}\ell_{2}\exp(\alpha_{0}R)\exp(-\frac{\alpha_{0}}{2r}t)+
\beta_{1}(\frac{\rho}{4})^{\ell_{2}}\exp(\alpha_{0}(\frac{\rho}{4})^{\ell_{2}}R)
\exp(-\frac{\alpha_{0}}{2r}(\frac{\rho}{4})^{\ell_{2}}t). \\
\end{split}
\end{equation}
When
\begin{equation*}
\frac{t}{2r}-R\leq\frac{1}{\alpha_{0}}(\log\ell_{2}+\ell_{2}\log\frac{4}{\rho}),
\end{equation*}
i.e.
\begin{equation*}
t\leq\frac{2r}{\alpha_{0}}\log\ell_{2}+\frac{2r}{\alpha_{0}}\ell_{2}\log\frac{4}{\rho}
+2r\beta_{2}+r\log(\ell_{1}+\ell_{2}),
\end{equation*}
it yields
\begin{equation}
\sigma_{2}(F)\leq\beta_{3}\ell_{2}\exp(\alpha_{0}R)\exp(-\frac{\alpha_{0}}{2r}t).
\end{equation}
Since $r\geq 1$ (cf. Lemma 3.12), $\alpha_{0}<1$, $\log\frac{4}{\rho}>1$,
(26) holds obviously when $0\leq t\leq\ell_{2}$.
By Fubini theorem, we obtain
\begin{equation}
\begin{split}
\sigma(\pi_{2}^{-1}(F))&\leq\beta_{0}\ell_{2}\exp(\alpha_{0}R)\exp(-\frac{\alpha_{0}}{2r}t) \\
&+\beta_{1}(\frac{\rho}{4})^{\ell_{2}}\exp(\alpha_{0}(\frac{\rho}{4})^{\ell_{2}}R)
\exp(-\frac{\alpha_{0}}{2r}(\frac{\rho}{4})^{\ell_{2}}t). \\
\end{split}
\end{equation}
When $0\leq t\leq\ell_{2}$,
\begin{equation}
\sigma(\pi_{2}^{-1}(F))\leq\beta_{3}\ell_{2}\exp(\alpha_{0}R)\exp(-\frac{\alpha_{0}}{2r}t).
\end{equation}

We secondly estimate the measure of $E'$. For any $x_{2}\in X_{2}\setminus F$, let $\psi_{3}(x_{1}):=\psi(x_{1},x_{2})$,
then $\psi_{3}\leq 0, \max_{X_{1}}\psi_{3}\geq\psi(a,x_{2})\geq -t/2$ and $dd^{c}\psi_{3}\geq -r\omega_{1}$.
Define a new function $\psi_{4}:=\psi_{3}-\int_{X_{1}}\psi_{3}d\sigma_{1}$.
Then
\begin{equation*}
\begin{split}
-\int\psi_{3}d\sigma_{1}&\leq\max_{X_{1}}\psi_{4}+t/2 \\
& \leq rR(X_{1},\omega_{1},\sigma_{1})+t/2\leq rR+t/2. \\
\end{split}
\end{equation*}
Hence by the same argument,
\begin{equation}
\begin{split}
\sigma_{1}(E_{x_{2}})&\leq\sigma_{1}(\psi_{4}\leq rR-t/2) \\
& \leq\beta_{0}\ell_{1}\exp(\alpha_{0}R)\exp(-\frac{\alpha_{0}}{2r}t) \\
&+\beta_{1}(\frac{\rho}{4})^{\ell_{1}}\exp(\alpha_{0}(\frac{\rho}{4})^{\ell_{1}}R)
\exp(-\frac{\alpha_{0}}{2r}(\frac{\rho}{4})^{\ell_{1}}t). \\
\end{split}
\end{equation}
When $0\leq t\leq\ell_{1}$,
\begin{equation}
\sigma_{1}(E_{x_{2}})\leq\beta_{3}\ell_{1}\exp(\alpha_{0}R)\exp(-\frac{\alpha_{0}}{2r}t).
\end{equation}
By Fubini theorem, we obtain
\begin{equation}
\begin{split}
\sigma(E')&\leq \beta_{0}\ell_{1}\exp(\alpha_{0}R)\exp(-\frac{\alpha_{0}}{2r}t) \\
& +\beta_{1}(\frac{\rho}{4})^{\ell_{1}}\exp(\alpha_{0}(\frac{\rho}{4})^{\ell_{1}}R)
\exp(-\frac{\alpha_{0}}{2r}(\frac{\rho}{4})^{\ell_{1}}t). \\
\end{split}
\end{equation}
When $0\leq t\leq\ell_{1}$,
\begin{equation}
\sigma(E')\leq\beta_{3}\ell_{1}\exp(\alpha_{0}R)\exp(-\frac{\alpha_{0}}{2r}t).
\end{equation}
So by estimates (27) and (31) for $t\geq 0$,
\begin{equation*}
\begin{split}
\sigma(\psi<-t)&\leq \beta_{0}(\ell_{1}+\ell_{2})\exp(\alpha_{0}R)\exp(-\frac{\alpha_{0}}{2r}t) \\
& +\sum_{j=1}^{2}\beta_{1}(\frac{\rho}{4})^{\ell_{j}}\exp(\alpha_{0}(\frac{\rho}{4})^{\ell_{j}}R)
\exp(-\frac{\alpha_{0}}{2r}(\frac{\rho}{4})^{\ell_{j}}t). \\
\end{split}
\end{equation*}
When $0\leq t\leq t_{0}=:\min(\ell_{1},\ell_{2})$, (28) and (32) yield
\begin{equation*}
\sigma(\psi<-t)\leq\beta_{3}(\ell_{1}+\ell_{2})\exp(\alpha_{0}R)\exp(-\frac{\alpha_{0}}{2r}t).
\end{equation*}
It is obvious that the above inequality is also valid for $t<0$ since $\psi\leq 0$.
By the definition of $\Delta(X, \omega_{12}, \sigma,t)$ in (15), we need to consider
a function $\varphi$ on $X$ with the conditions that $dd^{c}\varphi\geq -r\omega$ and $\int\varphi d\sigma=0$.
Define a new function $\psi:=\varphi-\max_{X}\varphi$. The fact that $\int\varphi d\sigma=0$ implies that $\max_{X}\varphi\geq 0$.
Then $\psi\leq\varphi$. Moreover, $\max_{X}\psi=0$. Then
\begin{equation*}
\sigma(\varphi<-t)\leq \sigma(\psi<-t).
\end{equation*}
Hence
\begin{equation}
\begin{split}
\Delta(X, \omega_{12}, \sigma,t)&\leq\beta_{0}(\ell_{1}+\ell_{2})\exp(\alpha_{0}R)\exp(-\frac{\alpha_{0}}{2r}t) \\
& +\sum_{j=1}^{2}\beta_{1}(\frac{\rho}{4})^{\ell_{j}}\exp(\alpha_{0}(\frac{\rho}{4})^{\ell_{j}}R)
\exp(-\frac{\alpha_{0}}{2r}(\frac{\rho}{4})^{\ell_{j}}t). \\
\end{split}
\end{equation}
When $0\leq t\leq t_{0}$,
\begin{equation}
\begin{split}
\Delta(X, \omega_{12}, \sigma,t)&\leq\beta_{3}(\ell_{1}+\ell_{2})\exp(\alpha_{0}R)\exp(-\frac{\alpha_{0}}{2r}t) \\
& =\beta_{4}(\ell_{1}+\ell_{2})^{1+\frac{\alpha_{0}}{2}}\exp(-\frac{\alpha_{0}}{2r}t), \\
\end{split}
\end{equation}
where $\beta_{4}=:\beta_{3}\exp(\alpha_{0}\beta_{2})$ is a universal positive constant.

To estimate $R(X, \omega_{12}, \sigma)$, we consider a function $\psi$ on $X$ with the conditions that $\max_{X}\psi=0, dd^{c}\psi\geq -r\omega_{12}$.
For any $0\leq \tilde t\leq t_{0}$,
\begin{equation}
\begin{split}
-\int\psi d\sigma &=\int_{0}^{\infty}\sigma(\psi\leq -t)dt \\
& =\int_{0}^{\tilde t}\sigma(\psi\leq -t)dt+\int_{\tilde t}^{t_{0}}\sigma(\psi\leq -t)dt+\int_{t_{0}}^{\infty}\sigma(\psi\leq -t)dt \\
& \leq \int_{0}^{\tilde t}dt+\int_{\tilde t}^{\infty}(\beta_{0}+\beta_{3})(\ell_{1}+\ell_{2})\exp(\alpha_{0}R)\exp(-\frac{\alpha_{0}}{2r}t)dt \\
& +\sum_{j=1}^{2}\beta_{1}\int_{t_{0}}^{\infty}(\frac{\rho}{4})^{\ell_{j}}\exp(\alpha_{0}(\frac{\rho}{4})^{\ell_{j}}R)
\exp(-\frac{\alpha_{0}}{2r}(\frac{\rho}{4})^{\ell_{j}}t)dt \\
& =\tilde t+\frac{2r}{\alpha_{0}}(\beta_{0}+\beta_{3})(\ell_{1}+\ell_{2})\exp(\alpha_{0}R)\exp(-\frac{\alpha_{0}}{2r}\tilde t) \\
& +\sum_{j=1}^{2}\frac{2r}{\alpha_{0}}\beta_{1}\exp(\alpha_{0}
(\frac{\rho}{4})^{\ell_{j}}R)\exp(-\frac{\alpha_{0}}{2r}(\frac{\rho}{4})^{\ell_{j}}t_{0}). \\
\end{split}
\end{equation}
The above inequality follows from (33) and (34).
By the hypotheses in (23), the last term in the last equality is less than $\frac{5r}{\alpha_{0}}\beta_{1}$
for $\ell_{1},\ell_{2}$ sufficiently large.
Hence
\begin{equation*}
-\int\psi d\sigma\leq\tilde t+\frac{2r}{\alpha_{0}}(\beta_{0}+\beta_{3})
(\ell_{1}+\ell_{2})\exp(\alpha_{0}R)\exp(-\frac{\alpha_{0}}{2r}\tilde t)+\frac{5r}{\alpha_{0}}\beta_{1}.
\end{equation*}
Take $\tilde t=2rR+\frac{2r}{\alpha_{0}}\log ((\beta_{0}+\beta_{3})(\ell_{1}+\ell_{2}))$.
By the hypotheses in (23), $\tilde t\leq t_{0}$ for $\ell_{1},\ell_{2}$ sufficiently large.
We deduce that
\begin{equation*}
\begin{split}
 -\int\psi d\sigma&\leq 2rR+2rR+\frac{2r}{\alpha_{0}}\log ((\beta_{0}+\beta_{3})(\ell_{1}+\ell_{2}))+
\frac{2r}{\alpha_{0}}+\frac{5r}{\alpha_{0}}\beta_{1} \\
& \leq\beta_{5}r(1+\log (\ell_{1}+\ell_{2})), \\
\end{split}
\end{equation*}
where $\beta_{5}$ is a universal positive constant.
This completes the proof.
\end{proof}

The following proposition shows the main estimates in this section.
\begin{proposition}
In the above setting, let $\mathbb{P}^{\ell_{k}}$ be a projective space endowed
with a probability moderate measure $\sigma_{k}$ satisfying Proposition 3.11, $\forall 1\leq k\leq m$.
Set $\sigma:=\sigma_{1}\times...\times\sigma_{m}$.
Suppose that $\ell_{1},..,\ell_{m}$ are chosen sufficiently large such that
\begin{equation}
\begin{split}
\frac{r(\ell_{1},...,\ell_{m})\log\ell}{\min(\ell_{1},...,\ell_{m})}&\ll 1, \\
(\frac{\rho}{4})^{\min(\ell_{1},...,\ell_{m})}\ell&\ll 1. \\
\end{split}
\end{equation}
Then there exist positive constants $\beta_{6}, \beta, \xi$ depending only on $m$ such that for $0\leq t\leq \min(\ell_{1},...,\ell_{m})$,
we have
\begin{equation*}
\begin{split}
R(\mathbb{P}^{\ell_{1}}\times...\times \mathbb{P}^{\ell_{m}}, \omega_{MP}, \sigma)&\leq \beta_{6}r(\ell_{1},...\ell_{m})(1+\log\ell), \\
S(\mathbb{P}^{\ell_{1}}\times...\times \mathbb{P}^{\ell_{m}}, \omega_{MP}, \sigma)&\leq \beta_{6}r(\ell_{1},...\ell_{m})(1+\log\ell), \\
\Delta(\mathbb{P}^{\ell_{1}}\times...\times \mathbb{P}^{\ell_{m}}, \omega_{MP}, \sigma, t)&\leq \beta_{6}\ell^{\xi}\exp(-\beta t/r(\ell_{1},...\ell_{m})). \\
\end{split}
\end{equation*}
\end{proposition}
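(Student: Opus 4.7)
The plan is to prove the proposition by induction on $m$, with base case $m=2$ supplied by Proposition 3.13 (and the degenerate $m=1$ case absorbed into Proposition 3.11). For the induction step from $m-1$ to $m$ I write $X=X'\times\mathbb{P}^{\ell_m}$ with $X'=\mathbb{P}^{\ell_1}\times\cdots\times\mathbb{P}^{\ell_{m-1}}$, endow $X'$ with its normalized K\"{a}hler form $\omega'_{MP}$ and product measure $\sigma'=\sigma_1\times\cdots\times\sigma_{m-1}$, and mimic the two-factor argument of Proposition 3.13. Concretely, given $\psi$ on $X$ with $\max_X\psi=0$ and $dd^c\psi\geq-r\omega_{MP}$ where $r=r(\ell_1,\ldots,\ell_m)$, fix $(a,b)\in X'\times\mathbb{P}^{\ell_m}$ with $\psi(a,b)=0$ and decompose the sublevel set $(\psi<-t)$ into $\pi_2^{-1}(F)\cup E'$ exactly as in Proposition 3.13, with $F=\{x_2:\psi(a,x_2)<-t/2\}$ and $E_{x_2}=\{x_1:\psi(x_1,x_2)<-t\}$.

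On the slice $\{a\}\times\mathbb{P}^{\ell_m}$, $\omega_{MP}$ restricts to $c_{1m}\omega_{FS}$, so Proposition 3.11 applied to $x_2\mapsto\psi(a,x_2)$ bounds $\sigma_m(F)$; on the slice $X'\times\{x_2\}$ for $x_2\notin F$, the restriction of $\omega_{MP}$ is a multiple of $\omega'_{MP}$ with ratio $c_{1m}/c'_{1,m-1}$, so the inductive hypothesis applied to $x_1\mapsto\psi(x_1,x_2)$ bounds $\sigma'(E_{x_2})$. Fubini's theorem then yields the bound on $\sigma(\psi<-t)$; the general case $\int\psi d\sigma=0$ reduces to $\max\psi=0$ by the same normalization as in the last paragraph of Proposition 3.13's proof, delivering the estimate on $\Delta$. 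The bounds on $R$ and $S$ are obtained from this tail estimate by integration $-\int\psi d\sigma=\int_0^\infty\sigma(\psi\leq-t)dt$, split at a threshold $\tilde t$ chosen to balance the two leading exponential terms exactly as in equation (35); for $S$ one additionally compares with the Lebesgue normalization $\int\phi\omega_{MP}^\ell=0$ by applying the same argument to $\phi-\int\phi\omega_{MP}^\ell$, using the standard estimate on $R(X,\omega_{MP},\omega_{MP}^\ell)$ from \cite[Proposition A.3]{ds1} to control the shift.

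The main obstacle is tracking the constants through the induction. One must verify, via Stirling's formula (in the spirit of Lemma 3.6), that the ratio $c_{1m}/c'_{1,m-1}$ is bounded by a constant depending only on $m$, so that, combined with Lemma 3.12, the ``effective $r$'' on the slice $X'$ stays comparable to $r(\ell_1,\ldots,\ell_m)$ after each induction step. The decay constant $\beta$ and the polynomial exponent $\xi$ on $\ell$ then arise as products of the corresponding constants through $m-2$ iterations of the two-factor step, keeping them dependent only on $m$; the smallness conditions (36) are precisely what is needed at each stage to absorb the nuisance terms $(\rho/4)^{\ell_j}$ coming from Proposition 3.11, just as observed for $m=2$ in the proof of Proposition 3.13.
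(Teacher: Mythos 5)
Your proposal is correct and follows essentially the same route as the paper: induction on $m$ via the two-factor decomposition of Proposition 3.13, with the slices controlled by Proposition 3.11 and the inductive hypothesis, the $\Delta$ and $R$ bounds obtained by the $\pi_2^{-1}(F)\cup E'$ splitting plus Fubini and the tail integration (35), and the $S$ bound reduced to $R$ by comparison with the Lebesgue normalization (which is exactly what the paper's citation of \cite[Proposition 2.4]{ds1} accomplishes). Your remark on controlling the ratio of normalizing constants $c_{1m}/c'_{1,m-1}$ via Stirling is the right technical point and is handled in the paper by Lemma 3.6 together with $c_{1m}\leq 1$.
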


\begin{proof}
When $m=2$, the estimates on $R$ and $\Delta$ are proved in Proposition 3.13.
When $m=3$, following the notations in the proof of Proposition 3.13, we write
$X_{1}=\mathbb{P}^{\ell_{1}}\times \mathbb{P}^{\ell_{2}}, X_{2}=\mathbb{P}^{\ell_{3}}, X=X_{1}\times X_{2}$.
The estimates on $R$ and $\Delta$ for $X_{1}$ (resp. $X_{2}$) are showed in Proposition 3.13 and (33)
(resp. Proposition 3.11). Consequently, the results of estimates on $R$ and $\Delta$ for $X$
are proved by using the analogous arguments in (25),(26),(29),(30) and (35) with the hypotheses (36).
For the general case, the results can be deduced inductively by using the analogous arguments
in the proof of Proposition 3.13.
The estimate on $S$ follows from \cite[Proposition 2.4]{ds1} and \cite[Lemma 4.6]{cmn}.
\end{proof}

\section{Proof of the main theorems}
In this section we will prove the main theorems.
First we give an estimate of the dimension $d_{k,p}$.
The lower estimate is proved by construction of a new metric on the line bundle
with only one singularity and application of vanishing theorem relative to multiplier ideal sheaves,
see \cite[Proposition 4.7]{cmn}, \cite[Theorem 4.5]{de2} and \cite[2.3.28]{mm1}.

\begin{theorem}
Let $(X,\omega)$ be a compact K\"{a}hler manifold of dimension $n$.
Suppose that $(L,h)$ is a singular Hermitian holomorphic line bundle on $X$ such that
$c_{1}(L,h)\geq\epsilon\omega$ for some positive constant $\epsilon$.
Moreover, $h$ is continuous outside a proper analytic subset $A$ of $X$.
Then there exist a constant $C>1$ and $p_{0}\in\mathbb{N}$ such that
for all $p\geq p_{0}$
\begin{equation*}
p^{n}/C\leq\dim H_{(2)}^{0}(X,L^{p})\leq Cp^{n}.
\end{equation*}
\end{theorem}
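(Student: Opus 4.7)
The upper bound is the easy direction. Since $X$ is compact, any $L^2$-section is a global holomorphic section, so $H^0_{(2)}(X, L^p) \subseteq H^0(X, L^p)$, and it suffices to invoke the standard polynomial-in-$p$ bound $\dim H^0(X, L^p) \leq Cp^n$ valid for any holomorphic line bundle on a compact complex manifold of dimension $n$; see, e.g., \cite[2.3.28]{mm1}.

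For the lower bound my plan is a concentration-of-metric argument of Demailly--Bonavero type. The starting observation is that, since $X$ is compact, the global $L^2$ condition on a holomorphic section of $L^p$ is equivalent to local integrability of $|s|^2 e^{-2p\varphi}$ at every point, giving the identification
\begin{equation*}
H^0_{(2)}(X, L^p) = H^0(X, L^p \otimes \mathcal{I}(h^p)),
\end{equation*}
where $\mathcal{I}(h^p)$ denotes the Nadel multiplier ideal sheaf of the weight $p\varphi$. It therefore suffices to bound the right-hand side below by $p^n/C$.

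Next I would pick a point $x_0 \in X \setminus A$ at which the local weight of $h$ is continuous, together with a coordinate chart $(U, z)$ centered at $x_0$, and construct an auxiliary singular metric $h' := h \cdot e^{-2\tau}$, where
\begin{equation*}
\tau := \lambda \chi(z) \log|z| - C_0,
\end{equation*}
with $\chi$ a smooth cutoff equal to $1$ near $x_0$ and supported in $U$, $\lambda > 0$ small enough that $dd^c \tau \geq -\tfrac{\epsilon}{2}\omega$ globally, and $C_0$ a constant chosen so that $\tau \leq 0$. Then $c_1(L, h') \geq \tfrac{\epsilon}{2}\omega$ and $\mathcal{I}((h')^p) \subseteq \mathcal{I}(h^p)$, so it is enough to lower-bound $\dim H^0(X, L^p \otimes \mathcal{I}((h')^p))$. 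Applying the Nadel--Kawamata--Viehweg vanishing theorem to $L^p \otimes \mathcal{I}((h')^p)$ (cf.\ \cite[Theorem 4.5]{de2}, \cite[Proposition 4.7]{cmn}, \cite[2.3.28]{mm1}) combined with Riemann--Roch yields
\begin{equation*}
\dim H^0(X, L^p \otimes \mathcal{I}((h')^p)) \geq \frac{p^n}{C}
\end{equation*}
for $p$ sufficiently large. Conceptually, the log pole of $\tau$ forces sections in $\mathcal{I}((h')^p)$ to vanish to order at least $\lfloor \lambda p\rfloor - n$ at $x_0$, so the codimension of this subspace inside $H^0(X, L^p)$ is of order $(\lambda p)^n/n!$; non-vanishing of this quotient gives the estimate.

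The main technical hurdle will be calibrating the parameter $\lambda$: it must be small enough that $dd^c\tau \geq -\tfrac{\epsilon}{2}\omega$ holds globally, yet large enough that the multiplier ideal $\mathcal{I}((h')^p)$ has colength of order $p^n$ at $x_0$. This balance is precisely what the Nadel--Demailly framework is designed to handle, combined with a Hörmander $L^2$-solvability argument for $\dbar$ to produce the sections with prescribed jets at $x_0$.
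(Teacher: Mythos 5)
Your overall strategy is the same one the paper invokes (it gives no proof, only the one-line sketch ``construction of a new metric with only one singularity plus Nadel vanishing'' and the references to \cite[Proposition 4.7]{cmn}, \cite[Theorem 4.5]{de2}, \cite[2.3.28]{mm1}): the upper bound by the standard Siegel-type estimate, and the lower bound by attaching an isolated logarithmic pole at a point $x_{0}\notin A$ and applying Nadel vanishing. The auxiliary metric $h'=he^{-2\tau}$ with $\tau=\lambda\chi(z)\log|z|-C_{0}$ is exactly the right construction, and the ``calibration of $\lambda$'' you flag as the main hurdle is in fact a non-issue: any fixed $\lambda\le\epsilon/(2C_{\chi})$ works, since the resulting colength is $\sim(\lambda p)^{n}/n!$, which is still of order $p^{n}$ with a constant depending on $\lambda$.

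The step that would fail as written is the finish ``Nadel vanishing combined with Riemann--Roch yields $\dim H^{0}(X,L^{p}\otimes\mathcal{I}((h')^{p}))\ge p^{n}/C$.'' Riemann--Roch gives $\chi(X,L^{p}\otimes\mathcal{I}((h')^{p}))=\chi(X,L^{p})-(\text{corrections})$, and neither term is under control: the leading coefficient of $\chi(X,L^{p})$ is $\int_{X}c_{1}(L)^{n}/n!$, which for a big non-nef line bundle can be zero or negative (volume and top self-intersection differ), and the correction includes the colength of $\mathcal{I}(h^{p})$ along $A$, which can itself be of order $p^{n}$ and is not estimated by your construction. The correct conclusion --- which your last two sentences gesture at but assemble in the wrong direction (a large codimension of a subspace of $H^{0}(X,L^{p})$ gives no lower bound on that subspace, nor on $H^{0}_{(2)}$) --- is to compare $\mathcal{I}((h')^{p})$ with $\mathcal{I}(h^{p})$ rather than with $\mathcal{O}_{X}$: the quotient sheaf $\mathcal{I}(h^{p})/\mathcal{I}((h')^{p})$ is supported at $x_{0}$ only (the two metrics differ by a bounded factor off a neighborhood of $x_{0}$, and $h$ is continuous there), the vanishing $H^{1}(X,L^{p}\otimes\mathcal{I}((h')^{p}))=0$ makes
\begin{equation*}
H^{0}(X,L^{p}\otimes\mathcal{I}(h^{p}))\longrightarrow H^{0}\bigl(X,L^{p}\otimes\mathcal{I}(h^{p})/\mathcal{I}((h')^{p})\bigr)
\end{equation*}
surjective, and the target surjects onto $\mathcal{O}_{x_{0}}/\mathfrak{m}_{x_{0}}^{s}$ with $s\sim\lambda p$, of dimension $\sim(\lambda p)^{n}/n!$. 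This bounds $\dim H^{0}_{(2)}(X,L^{p})=\dim H^{0}(X,L^{p}\otimes\mathcal{I}(h^{p}))$ from below directly, which is what the theorem asks for; equivalently, run the H\"ormander $\dbar$-argument you mention to produce $L^{2}$-sections with arbitrarily prescribed $s$-jets at $x_{0}$.
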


Recall that $\gamma_{k,p}$ is the Fubini-Study current defined in (11).
With techniques from \cite{cm1} and \cite{fs1}, we can show that the sequence of wedge products of
these Fubini-Study currents converges weakly to the wedge product of the curvature currents of
the line bundles in Theorem 1.1, see \cite[Proposition 3.1]{cmn}.

\begin{proposition}
In the setting of Theorem 1.1, we have
\begin{equation*}
\frac{1}{p^{m}}\gamma_{1,p}\wedge...\wedge\gamma_{m,p}\rightarrow c_{1}(L_{1},h_{1})\wedge...\wedge c_{1}(L_{m},h_{m})
\end{equation*}
when $p$ tends to $\infty$.
\end{proposition}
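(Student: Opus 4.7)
The plan is to build on the pointwise identity
\begin{equation*}
\frac{1}{p}\gamma_{k,p} = c_{1}(L_{k},h_{k}) + \ddc u_{k,p}, \qquad u_{k,p}:=\frac{1}{2p}\log P_{k,p},
\end{equation*}
which was derived just after (11) in the excerpt. Thus, setting $T_{k,p}:=\frac{1}{p}\gamma_{k,p}$ and $T_{k}:=c_{1}(L_{k},h_{k})$, the statement reduces to showing that
$T_{1,p}\wedge\cdots\wedge T_{m,p}\to T_{1}\wedge\cdots\wedge T_{m}$ weakly as currents on $X$. Proposition 2.1 already guarantees that both sides make sense as positive closed currents for $p$ large, using the general position hypothesis on the $A_{k}$ and on the base loci $A_{k,p}$.

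The first step is a Bergman-kernel estimate: one has to show that $u_{k,p}\to 0$ in $L^{1}(X,\omega^{n})$ and, more importantly, $u_{k,p}\to 0$ locally uniformly on $X\setminus A_{k}$. The upper bound $u_{k,p}\leq \frac{1}{2p}\log(Cp^{n})\to 0$ is immediate from a standard sub-mean-value argument together with the dimension bound in Theorem 4.1. The matching lower bound (hence local uniform convergence outside $A_{k}$) is the Tian-type asymptotic for Bergman kernels with singular metrics proved by Coman-Marinescu \cite{cm1} and is the technical input we import here.

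The second step is to combine these potential estimates to obtain the wedge-product convergence. Work first on the open set $\Omega:=X\setminus\bigcup_{k}A_{k}$. On any small contractible Stein $U\subset\Omega$ each $h_{k}$ is continuous, so $T_{k}=\ddc\psi_{k}$ for some continuous p.s.h.\ potential $\psi_{k}$, and $T_{k,p}=\ddc(\psi_{k}+u_{k,p})$ with $\psi_{k}+u_{k,p}$ locally uniformly convergent to $\psi_{k}$. Bedford-Taylor continuity of the Monge-Amp\`ere operator under locally uniform convergence of continuous p.s.h.\ potentials then yields $T_{1,p}\wedge\cdots\wedge T_{m,p}\to T_{1}\wedge\cdots\wedge T_{m}$ on $\Omega$.

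The third, and main, step is to propagate this convergence across the singular set $\bigcup_{k}A_{k}$. The hard part is that a priori both the limit and approximating currents may carry mass on $A:=\bigcup_{k}A_{k}$, which has positive codimension by the general position hypothesis. The argument I would run is the mass-balance one used in \cite{fs1,cm1,cmn}: the total masses $\int_{X}T_{1,p}\wedge\cdots\wedge T_{m,p}\wedge\omega^{n-m}$ are cohomological and equal to $p^{-m}$ times $\int_{X}\gamma_{1,p}\wedge\cdots\wedge\gamma_{m,p}\wedge\omega^{n-m}$, which by the computation of $d_{p}$ in Proposition 3.6 converges to $\|T_{1}\wedge\cdots\wedge T_{m}\|$. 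Combined with weak convergence on $\Omega$ and the fact that any weak limit of a subsequence must be a positive closed current of the correct cohomology class supported, modulo the limit already identified on $\Omega$, on the thin set $A$, the general position hypothesis on $A_{1},\dots,A_{m}$ forces no extra mass to concentrate on $A$. This gives convergence on all of $X$.

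The principal obstacle is the third step: controlling the behavior of the wedge product near $A$. Without the general position hypothesis one could easily have mass collapsing onto $A$ in the limit, so it is exactly here that the hypothesis is used in an essential way, via the mass identity and the codimension control of $A_{i_{1}}\cap\cdots\cap A_{i_{k}}$.
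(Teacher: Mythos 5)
Your first two steps do track the route the paper relies on: Proposition 4.2 is not proved in the paper itself but is quoted from \cite[Proposition 3.1]{cmn}, whose proof rests on precisely the inputs you name --- the locally uniform convergence $\tfrac{1}{2p}\log P_{k,p}\to 0$ on compact subsets of $X\setminus A_k$ (the singular Bergman kernel asymptotics of \cite{cm1}) and then a convergence theorem for wedge products of $(1,1)$-currents whose potentials converge locally uniformly outside analytic sets in general position.

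The gap is in your third step. From weak convergence on $\Omega=X\setminus A$ and equality of total masses you only get, for a weak limit $S$ of a subsequence, that $S\geq \mathbf{1}_{\Omega}\bigl(T_{1}\wedge\cdots\wedge T_{m}\bigr)$ and $\|S\|=\|T_{1}\wedge\cdots\wedge T_{m}\|$, hence that $\mathbf{1}_{A}S$ and $\mathbf{1}_{A}(T_{1}\wedge\cdots\wedge T_{m})$ have the \emph{same total mass} --- not that they coincide as currents on $A$. This residual part is genuinely nonzero in the setting of Theorem 1.1: if, say, the weight of $h_{1}$ has a logarithmic pole along a hypersurface $A_{1}$, then $c_{1}(L_{1},h_{1})$ contains a multiple of $[A_{1}]$ and the product $c_{1}(L_{1},h_{1})\wedge\cdots\wedge c_{1}(L_{m},h_{m})$ charges $A_{1}$; so any argument proving ``no mass concentrates on $A$'' would contradict the target itself. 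Nor does a support theorem rescue you: $S-T_{1}\wedge\cdots\wedge T_{m}$ is a closed order-zero current of bidimension $(n-m,n-m)$ supported on $A$, but each $A_{k}$ may have dimension $n-1\geq n-m$, so it need not vanish (general position only bounds the dimensions of the \emph{intersections} $A_{i_{1}}\cap\cdots\cap A_{i_{k}}$, not of the individual $A_{k}$). The argument actually used in \cite{cm1}, \cite{cmn} (going back to \cite{fs1} and \cite{de}) is local and inductive on the number of factors: one shows $dd^{c}u_{1,p}\wedge\cdots\wedge dd^{c}u_{k,p}\wedge R_{p}\to dd^{c}u_{1}\wedge\cdots\wedge dd^{c}u_{k}\wedge R$ step by step, with Chern--Levine--Nirenberg/Oka-type inequalities giving uniform smallness of the masses of the partial wedge products on small neighborhoods of the singular sets; general position enters exactly there, to ensure $\mathrm{codim}\,(A_{i_{1}}\cap\cdots\cap A_{i_{k}})\geq k$ so that these estimates close. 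Your step 3 should be replaced by this local mass control; a global mass count is not enough.
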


We also need the following convergence property.
\begin{proposition}
In the setting of Theorem 1.1, there exists a positive constant $C$ depending only on $X, (L_{1},h_{1}),...,(L_{m},h_{m})$ such that
\begin{equation*}
\bigl|\bigl<\frac{1}{p^{m}}(\Phi_{p}^{\star}(\sigma_{p})-\Phi_{p}^{\star}(\omega_{p}^{d_{0,p}})),\phi \bigr>\bigr|
\leq \frac{C\log p}{p}\|\phi\|_{\mathscr{C}^{2}}
\end{equation*}
for any $(n-m,n-m)$-form of class $\mathscr{C}^{2}$ on $X$ and $p$ sufficiently large.
In particular, $\frac{1}{p^{m}}(\Phi_{p}^{\star}(\sigma_{p})-\Phi_{p}^{\star}(\omega_{p}^{d_{0,p}}))$
converges weakly to $0$ as $p\rightarrow\infty$.
\end{proposition}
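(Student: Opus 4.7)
The plan is to deduce Proposition 4.3 from Theorem 3.3 (the Dinh--Sibony upper bound) applied with $\mu_p=\sigma_p$, after bounding each factor on the right-hand side using the dimension count of Theorem 4.1, the degree formulas of Proposition 3.6 together with Remark 3.7, and the multi-projective estimate of Proposition 3.14.

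First I would record the scaling of the intermediate degrees. Proposition 3.6 gives $d_p=p^{m}\|c_1(L_1,h_1)\wedge\cdots\wedge c_1(L_m,h_m)\|$, so $d_p/p^{m}$ is a fixed positive constant. Remark 3.7 then yields $\delta_p\le C_1 p^{m-1}$, hence
\begin{equation*}
\frac{\delta_p}{p^{m}}\le \frac{C_1}{p}.
\end{equation*}

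Next I would estimate $S(\mathbb{X}_p,\omega_p,\sigma_p)$ through Proposition 3.14, taking $\ell_k=d_{k,p}$ so that $\ell=d_{0,p}$. By Theorem 4.1 there is a constant $C>1$ with $p^{n}/C\le d_{k,p}\le Cp^{n}$ for every $k$ and every $p\ge p_0$; consequently $\ell$ is of order $p^{n}$, while
\begin{equation*}
r(\ell_1,\ldots,\ell_m)=\max_{1\le k\le m}\frac{\ell}{\ell_k}\le mC^{2},
\end{equation*}
so $r$ stays uniformly bounded in $p$. The two smallness hypotheses in (36) are then immediate for $p$ large: the first reduces to $\log p^{n}/(p^{n}/C)\to 0$, and the second to $(\rho/4)^{p^{n}/C}p^{n}\to 0$. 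Moreover, the choice $c_{p}\le 1/c^{p^{n}}$ in Theorem 1.1 matches the constraint of Proposition 3.10, so each factor $\sigma_{k,p}$ is a probability moderate measure satisfying Proposition 3.11. Proposition 3.14 therefore supplies a constant $C_2$ such that
\begin{equation*}
S(\mathbb{X}_p,\omega_p,\sigma_p)\le \beta_{6}\,r(\ell_1,\ldots,\ell_m)(1+\log \ell)\le C_2\log p
\end{equation*}
for all $p$ sufficiently large.

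With these pieces in hand, Theorem 3.3 gives
\begin{equation*}
\Bigl|\Bigl\langle\frac{1}{d_p}\bigl(\Phi_p^{\star}(\sigma_p)-\Phi_p^{\star}(\omega_p^{d_{0,p}})\bigr),\phi\Bigr\rangle\Bigr|
\le 2S(\mathbb{X}_p,\omega_p,\sigma_p)\frac{\delta_p}{d_p}\|\phi\|_{\mathscr{C}^{2}}
\le \frac{C_3\log p}{p}\|\phi\|_{\mathscr{C}^{2}}.
\end{equation*}
Because $d_p/p^{m}$ is a fixed positive constant, multiplying by $d_p/p^{m}$ on the left converts the prefactor $d_p^{-1}$ into $p^{-m}$ and absorbs the change into the final constant $C$, yielding the stated bound. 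The last sentence of the proposition, the weak convergence to $0$, then follows at once since $\log p/p\to 0$. The only non-routine step I foresee is checking that the hypotheses of Proposition 3.14 are indeed satisfied for our family, but this is handled transparently by the two-sided bound on $d_{k,p}$ provided by Theorem 4.1; the rest is bookkeeping of constants that are independent of $p$.
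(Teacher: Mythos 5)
Your proposal is correct and follows essentially the same route as the paper: bound $S(\mathbb{X}_p,\omega_p,\sigma_p)$ by $C\log p$ via Proposition 3.14 (after verifying the hypotheses (36) through Theorem 4.1 and Lemma 3.12), bound $\delta_p d_p^{-1}$ by $C/p$ via Proposition 3.7 and Remark 3.8, and conclude with Theorem 3.3. The only cosmetic issue is an off-by-one in your references (the degree formulas are Proposition 3.7 and Remark 3.8 in the paper, not 3.6 and 3.7); the mathematics is the paper's argument.
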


\begin{proof}
By Theorem 4.1, there exist a positive constant $C_{1}$
and $p_{0}\in\mathbb{N}$ such that for all $p\geq p_{0}, 1\leq k\leq m$, we have
\begin{equation*}
p^{n}/C_{1}\leq d_{k,p}\leq C_{1}p^{n}.
\end{equation*}
Then by Lemma 3.12, $r(\mathbb{X}_{p}, \omega_{p})\leq mC_{1}^{2}$.
Moreover, $d_{1,p},...,d_{m,p}$ satisfy the conditions in (36) for $p$ sufficiently large.
Hence it follows from Proposition 3.14 that there exists a positive constant $C_{2}$,
\begin{equation*}
S(\mathbb{X}_{p}, \omega_{p}, \sigma_{p})\leq C_{2}\log p.
\end{equation*}
Thanks to Proposition 3.7 and Remark 3.8, we can deduce that
\begin{equation*}
\delta_{p}d_{p}^{-1}\leq C_{3}\frac{1}{p}
\end{equation*}
for some positive constant $C_{3}$.
Note that the constants $C_{1},C_{2},C_{3}$ all depend only on $X, (L_{1},h_{1}),...,(L_{m},h_{m})$.
Then
\begin{equation*}
2S(\mathbb{X}_{p}, \omega_{p}, \sigma_{p})\delta_{p}d_{p}^{-1}\leq \frac{C\log p}{p}
\end{equation*}
for some positive constant $C$ depending only on $X, (L_{1},h_{1}),...,(L_{m},h_{m})$
Hence the proof is completed by applying Theorem 3.3.
\end{proof}

The basic proof of the first main theorem will end with the following theorem,
which extends \cite[Corollary 3.9]{dmm} and \cite[Theorem 4.2]{cmn}
\begin{theorem}
In the setting of Theorem 1.1, there exist a positive constant $\xi$ which depends only on $m$
and a positive constant $C$ which depends only on $X, (L_{1},h_{1}),...,(L_{m},h_{m})$ with the following property:
Given any sequence of positive numbers $\{\lambda_{p}\}_{p=1}^{\infty}$ with the following conditions
\begin{equation*}
\begin{split}
\liminf_{p\rightarrow\infty}\frac{\lambda_{p}}{\log p}&>(1+\xi n)C, \\
\lim_{p\rightarrow\infty}\frac{\lambda_{p}}{p^{n}}&=0, \\
\end{split}
\end{equation*}
there exist subsets $E_{p}\subset\mathbb{X}_{p}$ such that for all $p$ sufficiently large,
\par (i)
\begin{equation*}
\sigma_{p}(E_{p})\leq Cp^{\xi n}\exp(-\frac{\lambda_{p}}{C}),
\end{equation*}
\par (ii) for any point $S_{p}\in\mathbb{X}_{p}\setminus E_{p}$ and any $(n-m,n-m)$-form $\phi$ of class $\mathscr{C}^{2}$,
\begin{equation*}
\bigl|\frac{1}{p^{m}}\bigl<[S_{p}=0]-\Phi_{p}^{\star}(\sigma_{p}), \phi \bigr> \bigr|\leq \frac{C\lambda_{p}}{p}\|\phi\|_{\mathscr{C}^{2}}.
\end{equation*}
\end{theorem}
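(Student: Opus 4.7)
The plan is to apply the Dinh--Sibony equidistribution estimate (Theorem 3.2) to the meromorphic transforms $\Phi_p : X \to \mathbb{X}_p$ of codimension $n-m$ with $\mu_p = \sigma_p$, and to translate the resulting exceptional bound into the conclusions (i) and (ii). The key identification is Lemma 3.4: $\Phi_p^\star(\delta_{S_p}) = [S_p = 0]$, so the set $E_p(\epsilon)$ from (16) coincides with the set of $S_p$ for which the quantity on the left-hand side of (ii) is too large.

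First I would collect the relevant asymptotic data. Theorem 4.1 gives $p^n / C_1 \leq d_{k,p} \leq C_1 p^n$ for $1 \leq k \leq m$ and $p$ large, and Lemma 3.12 then yields $r(\mathbb{X}_p, \omega_p) \leq m C_1^2 =: \tilde r$ uniformly in $p$. Proposition 3.7, together with the bigness hypothesis $c_1(L_k,h_k) \geq \epsilon \omega$, gives $d_p \geq p^m / C_2$, and Remark 3.8 gives $\delta_p \leq C_2 p^{m-1}$, so $\delta_p / d_p \leq C_3 / p$. For $p$ large the dimensions $d_{1,p}, \ldots, d_{m,p}$ satisfy the hypotheses (36) of Proposition 3.14, which supplies constants $\beta, \xi > 0$ depending only on $m$ such that
\[
R(\mathbb{X}_p, \omega_p, \sigma_p) \leq C_5 \log p, \qquad \Delta(\mathbb{X}_p, \omega_p, \sigma_p, t) \leq C_6 \, p^{\xi n} \exp(-\beta t / \tilde r)
\]
for all $0 \leq t \leq \min_k d_{k,p}$, with $C_5, C_6$ depending only on $X$ and $(L_k, h_k)$.

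Next I would calibrate the threshold. Set $\epsilon_p := C_4 \lambda_p / p$ for a constant $C_4$ depending only on $X, (L_k, h_k)$, and put $E_p := E_p(\epsilon_p)$. Then
\[
\eta_{\epsilon_p, p} = \epsilon_p \, d_p / \delta_p - 3 R(\mathbb{X}_p, \omega_p, \sigma_p) \geq (C_4 / C_3) \lambda_p - 3 C_5 \log p.
\]
The hypothesis $\liminf \lambda_p / \log p > (1 + \xi n) C$, with $C$ chosen large enough to dominate $C_3, C_5, \tilde r / \beta$ and $\log C_6$, forces $\eta_{\epsilon_p, p} \geq \lambda_p / C$ for $p$ large; and $\lambda_p / p^n \to 0$, together with $d_{k,p} \geq p^n / C_1$, ensures $\eta_{\epsilon_p, p} \leq \min_k d_{k,p}$ for $p$ large, so the refined range of validity of Proposition 3.14 is respected. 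Feeding this into Theorem 3.2 gives
\[
\sigma_p(E_p) \leq \Delta(\mathbb{X}_p, \omega_p, \sigma_p, \eta_{\epsilon_p, p}) \leq C_6 \, p^{\xi n} \exp(-\beta \eta_{\epsilon_p, p} / \tilde r) \leq C p^{\xi n} \exp(-\lambda_p / C),
\]
proving (i). For (ii), any $S_p \notin E_p$ and $\mathscr{C}^2$ form $\phi$ satisfies, by the definition (16) of $E_p(\epsilon_p)$ and Lemma 3.4,
\[
\frac{1}{p^m} \bigl|\bigl\langle [S_p = 0] - \Phi_p^\star(\sigma_p), \phi \bigr\rangle\bigr| \leq \frac{d_p \epsilon_p}{p^m} \|\phi\|_{\mathscr{C}^2} \leq \frac{C \lambda_p}{p} \|\phi\|_{\mathscr{C}^2}
\]
after using $d_p \leq C_2 p^m$.

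The main obstacle, I expect, is not any single inequality but the bookkeeping required to fix one universal $C$ and one $\xi$ (depending only on $m$) so that the threshold $(1 + \xi n) C$ in the hypothesis on $\lambda_p$ simultaneously (a) dominates $3 R \sim \log p$ and so keeps $\eta_{\epsilon_p, p}$ comparable to $\lambda_p$, (b) absorbs the $p^{\xi n}$ prefactor produced by Proposition 3.14 when the bound is rewritten as $C p^{\xi n} \exp(-\lambda_p / C)$, and (c) stays inside the effective window $[0, \min_k d_{k,p}]$ where the sharper $\Delta$-estimate is available. Once these constants are aligned the three estimates assemble directly into (i) and (ii).
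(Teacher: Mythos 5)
Your proposal is correct and follows essentially the same route as the paper's proof: the same exceptional sets $E_p(\epsilon_p)$ with $\epsilon_p$ proportional to $\lambda_p/p$, the same threshold $\eta_p=\epsilon_p d_p\delta_p^{-1}-3R_p$, the same inputs (Theorem 3.2, Lemma 3.4, Theorem 4.1, Lemma 3.12, Proposition 3.7, Remark 3.8, Proposition 3.14), and the same calibration of $C$ and $\xi$. The only point worth making explicit is the verification (via the construction of $\sigma_p$ from $(1/c^{p^n},\rho)$-functions) that each factor of $\sigma_p$ is a moderate measure satisfying Proposition 3.11, which is the hypothesis needed before Proposition 3.14 can be invoked.
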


\begin{proof}
To simplify the notations, let
\begin{equation}
\begin{split}
R_{p}:&=R(\mathbb{X}_{p},\omega_{p},\sigma_{p}), \\
\Delta_{p}(t):&=\Delta(\mathbb{X}_{p},\omega_{p},\sigma_{p},t), \\
E_{p}(\epsilon):&=\bigcup_{\|\phi\|_{\mathscr{C}^{2}}\leq 1}\{S_{p}\in\mathbb{X}_{p}:
|\bigl<[S_{p}=0]-\Phi_{p}^{\star}(\sigma_{p}), \phi \bigr>|\geq d_{p}\epsilon\},
\end{split}
\end{equation}
where $t\geq 0, \epsilon>0$.
By \cite[Theorem 1.1]{sh} with its proof, there exists a constant $c>1$ which depends only on
$X, L_{1},...,L_{m}, \rho$ such that each component $\bigwedge_{j=1}^{d_{k,p}}\pi_{k,p}^{\star}(dd^{c}u_{j}^{k,p}+\omega_{FS})$
of $\sigma_{p}$ is a probability moderate measure satisfying Proposition 3.11.
Theorem 4.1 implies that $d_{1,p},...,d_{m,p}$ satisfy the conditions in (36) for p sufficiently large.
Hence $\sigma_{p}$ satisfy Proposition 3.14.
Let $\tilde C$ be a positive constant depending only on $X, (L_{1},h_{1}),...,(L_{m},h_{m})$ such that
for all $p\geq p_{0}, 1\leq k\leq m$, we have
\begin{equation*}
p^{n}/\tilde C\leq d_{k,p}\leq \tilde Cp^{n},
\end{equation*}
Here $p_{0}$ is a positive integer which is large enough.
Then we have for $p\geq p_{0}$ and $0\leq t\leq p^{n}/\tilde C$,
\begin{equation}
\begin{split}
R_{p}&\leq m\beta_{6}\tilde C^{2}(1+\log(m\tilde Cp^{n}))\leq C_{1}\log p, \\
\Delta_{p}(t)&\leq \beta_{6}(m\tilde Cp^{n})^{\xi}\exp(\frac{-\beta t}{m\tilde C^{2}})\leq C_{1}p^{\xi n}\exp(-\frac{t}{C_{1}}).
\end{split}
\end{equation}
Here $C_{1}$ is some constant depending only on $X, (L_{1},h_{1}),...,(L_{m},h_{m})$.
Let
\begin{equation}
\epsilon_{p}:=\frac{\lambda_{p}}{p}, \quad \eta_{p}:=\epsilon_{p}d_{p}\delta_{p}^{-1}-3R_{p}.
\end{equation}
It follows from Proposition 3.7 and Remark 3.8 that for $p\geq p_{0}$,
\begin{equation*}
\eta_{p}\geq C_{2}\lambda_{p}-3C_{1}\log p.
\end{equation*}
Here $C_{2}$ is some constant depending only on $X, (L_{1},h_{1}),...,(L_{m},h_{m})$.
If there is a condition that
\begin{equation*}
\liminf_{p\rightarrow\infty}\frac{\lambda_{p}}{\log p}>\frac{6C_{1}}{C_{2}},
\end{equation*}
then for all $p$ sufficiently large, $\eta_{p}>\frac{C_{2}}{2}\lambda_{p}$.
Since
\begin{equation*}
\lim_{p\rightarrow\infty}\frac{\lambda_{p}}{p^{n}}=0,
\end{equation*}
$\eta_{p}$ can be always chosen such that $\frac{C_{2}}{2}\lambda_{p}<\eta_{p}<p^{n}/\tilde C$ for $p$ sufficiently large.
By applying Theorem 3.2 to the subset $E_{p}\subset\mathbb{X}_{p}$, we obtain
\begin{equation*}
\sigma_{p}(E_{p})\leq \Delta_{p}(\eta_{p})\leq C_{1}p^{\xi n}\exp(\frac{-C_{2}}{2C_{1}}\lambda_{p}),
\end{equation*}
where $E_{p}=E_{p}(\epsilon_{p})$.
Now we set
\begin{equation*}
C=\max\bigl(\frac{6C_{1}}{C_{2}(1+\xi n)},\frac{2C_{1}}{C_{2}},C_{1}, \|c_{1}(L_{1},h_{1})\wedge...\wedge c_{1}(L_{m},h_{m})\| \bigr).
\end{equation*}
Then for all $p$ sufficiently large,
\begin{equation*}
\sigma_{p}(E_{p})\leq Cp^{\xi n}\exp(\frac{-\lambda_{p}}{C})
\end{equation*}
under the conditions that
\begin{equation*}
\begin{split}
\liminf_{p\rightarrow\infty}\frac{\lambda_{p}}{\log p}&>(1+\xi n)C, \\
\lim_{p\rightarrow\infty}\frac{\lambda_{p}}{p^{n}}&=0. \\
\end{split}
\end{equation*}
By the definition of $E_{p}$, it is obvious that for any $S_{p}\in\mathbb{X}_{p}\setminus E_{p}$
and any $(n-m,n-m)$-form $\phi$ of class $\mathscr{C}^{2}$,
\begin{equation}
\bigl|\frac{1}{p^{m}}\bigl<[S_{p}=0]-\Phi_{p}^{\star}(\sigma_{p}), \phi \bigr> \bigr|
\leq\frac{d_{p}}{p^{m}}\frac{\lambda_{p}}{p}\|\phi\|_{\mathscr{C}^{2}}
\leq \frac{C\lambda_{p}}{p}\|\phi\|_{\mathscr{C}^{2}}.
\end{equation}
This completes the proof.
\end{proof}

\begin{proposition}
In the setting of Theorem 1.1, for almost all $S=\{S_{p}\}_{p=1}^{\infty}\in\mathbb{P}^{X}$ with respect to $\sigma$,
we have
\begin{equation*}
\frac{1}{p^{m}}[S_{p}=0]-\frac{1}{p^{m}}\Phi_{p}^{\star}(\sigma_{p})\rightarrow 0
\end{equation*}
in the weak sense of currents as $p\rightarrow \infty$ on $X$.
\end{proposition}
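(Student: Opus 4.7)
The plan is a direct Borel--Cantelli argument driven by the quantitative estimate of Theorem 4.4. Since the convergence of currents is tested against smooth (hence $\mathscr{C}^2$) $(n-m,n-m)$-forms, it suffices to produce a $\sigma$-full set of sequences $\{S_p\}$ along which the pairing $\bigl\langle \frac{1}{p^m}[S_p=0]-\frac{1}{p^m}\Phi_p^\star(\sigma_p),\phi\bigr\rangle$ tends to $0$ for every $\phi\in\mathscr{C}^2$ with $\|\phi\|_{\mathscr{C}^2}\leq 1$. Indeed, the general case follows by linearity and the fact that $\mathscr{C}^2$ forms are dense in the space of smooth test forms.

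First I would fix the constants $\xi$ and $C$ given by Theorem 4.4, and choose a sequence of positive numbers $\{\lambda_p\}$ satisfying the hypotheses of that theorem. The natural choice is $\lambda_p := K \log p$ for a constant $K$ chosen once and for all so that
\begin{equation*}
K > (1+\xi n)C \quad \text{and} \quad K/C > \xi n + 1.
\end{equation*}
Both are compatible, since the second condition only asks $K$ to be large enough. Then obviously $\lambda_p/\log p\to K>(1+\xi n)C$ and $\lambda_p/p^n\to 0$, so Theorem 4.4 applies and yields subsets $E_p\subset \mathbb{X}_p$ with
\begin{equation*}
\sigma_p(E_p)\leq Cp^{\xi n}\exp\bigl(-\lambda_p/C\bigr)=Cp^{\xi n - K/C}.
\end{equation*}
By the choice of $K$, the exponent $\xi n - K/C$ is strictly less than $-1$, so $\sum_{p\geq 1}\sigma_p(E_p)<\infty$.

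Second, since $\sigma=\prod_{p\geq 1}\sigma_p$ is a product probability measure, the classical Borel--Cantelli lemma gives
\begin{equation*}
\sigma\bigl(\{S=\{S_p\}\in\mathbb{P}^X : S_p\in E_p \text{ for infinitely many } p\}\bigr)=0.
\end{equation*}
Consequently, for $\sigma$-a.e.\ $S=\{S_p\}_{p\geq 1}\in\mathbb{P}^X$ there exists $p_0=p_0(S)$ such that $S_p\in\mathbb{X}_p\setminus E_p$ for every $p\geq p_0$. For such $p$, part (ii) of Theorem 4.4 yields, for every $(n-m,n-m)$-form $\phi$ of class $\mathscr{C}^2$,
\begin{equation*}
\Bigl|\bigl\langle\tfrac{1}{p^m}[S_p=0]-\tfrac{1}{p^m}\Phi_p^\star(\sigma_p),\phi\bigr\rangle\Bigr|\leq \frac{CK\log p}{p}\|\phi\|_{\mathscr{C}^2},
\end{equation*}
which tends to $0$ as $p\to\infty$. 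This proves the weak convergence to $0$ on $\mathscr{C}^2$ test forms, and therefore on all smooth test forms.

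The only real obstacle is the balancing of the two conditions on $\{\lambda_p\}$: the lower bound in Theorem 4.4 forces $\lambda_p$ to grow at least like $(1+\xi n)C\log p$, while the Borel--Cantelli summability $\sum p^{\xi n}\exp(-\lambda_p/C)<\infty$ demands $\lambda_p/C \geq (\xi n+1)\log p + O(1)$; both are simultaneously achievable precisely because the constants $C$ and $\xi$ in Theorem 4.4 are uniform in $p$, so a single multiple of $\log p$ works. Once this balance is set, everything else is routine.
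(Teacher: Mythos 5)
Your proposal is correct and follows essentially the same route as the paper: apply Theorem 4.4 with $\lambda_p$ a fixed large multiple of $\log p$ so that $\sum_p \sigma_p(E_p)<\infty$, invoke the (first) Borel--Cantelli lemma to get $S_p\notin E_p$ eventually for $\sigma$-a.e.\ sequence, and conclude from part (ii) that the pairing is $O(\log p/p)\|\phi\|_{\mathscr{C}^2}\to 0$. (Minor remark: your two conditions on $K$ coincide, since $K>(1+\xi n)C$ is exactly $K/C>\xi n+1$; and no density argument is needed, as smooth test forms are already of class $\mathscr{C}^2$.)
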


\begin{proof}
This is a standard proof which is analogous to that of Borel-Cantelli lemma.
It follows from Theorem 4.4 that
\begin{equation*}
\sum_{p=1}^{\infty}\sigma_{p}(E_{p})\leq C_{3}\sum_{p=1}^{\infty}\frac{1}{p^{\eta}}<\infty
\end{equation*}
for some positive constant $C_{3}$ and $\eta>1$, where $E_{p}=E_{p}(\epsilon_{p})$ (cf. (37) and (39)).
Define
\begin{equation*}
E:=\{S=\{S_{p}\}_{p=1}^{\infty}\in\mathbb{P}^{X}: S_{p}\in E_{p} \quad \text{for infinitely many}\quad p\}.
\end{equation*}
It is easy to see that $E$ is contained in the following set
\begin{equation*}
\tilde E_{N}:=\{S=\{S_{p}\}_{p=1}^{\infty}\in\mathbb{P}^{X}: S_{p}\in E_{p} \quad\text{for at least one}\quad p\geq N\}
\end{equation*}
for each integer $N\geq 1$.
Hence we have
\begin{equation*}
\sigma(E)\leq \sigma(\tilde E_{N})\leq \sum_{p=N}^{\infty}\sigma_{p}(E_{p})\leq C_{3}\sum_{p=N}^{\infty}\frac{1}{p^{\eta}}.
\end{equation*}
The proof is completed by letting $N$ tend to $\infty$.
\end{proof}

\noindent $\textbf{End of the proof of Theorem 1.1}$.
By \cite[Theorem 1.1]{sh} with its proof, there exists a constant $c>1$ which depends only on
$X, L_{1},...,L_{m}, \rho$ such that each component $\bigwedge_{j=1}^{d_{k,p}}\pi_{k,p}^{\star}(dd^{c}u_{j}^{k,p}+\omega_{FS})$
of $\sigma_{p}$ is a probability moderate measure satisfying Proposition 3.11.
Hence $\sigma_{p}$ satisfy Proposition 3.14. Note that $c$ is independent of the choices of the metrics $h_{1},...,h_{m}$.
It follows from Proposition 3.9 and Proposition 4.3 that
\begin{equation*}
\frac{1}{p^{m}}(\Phi_{p}^{\star}(\sigma_{p})-\gamma_{1,p}\wedge...\wedge\gamma_{m,p})\rightarrow 0
\end{equation*}
in the weak sense of currents as $p\rightarrow\infty$.
Then Proposition 4.5 implies that for almost all $S=\{S_{p}\}_{p=1}^{\infty}\in\mathbb{P}^{X}$ with respect to $\sigma$
\begin{equation*}
\frac{1}{p^{m}}([S_{p}=0]-\gamma_{1,p}\wedge...\wedge\gamma_{m,p})\rightarrow 0
\end{equation*}
in the weak sense of currents as $p\rightarrow\infty$.
The proof is finally completed by  application of Proposition 4.2. $\vspace{12pt}$\qquad \qquad \qquad \qquad \qquad \qquad
\qquad \qquad \qquad \qquad \qquad \quad $\qed$

Theorem 1.3 follows from Proposition 4.3 and Theorem 4.4 with the following theorem \cite[Proposition 5.1]{cmn}.

\begin{theorem}
In the setting of Theorem 1.3, there exists a positive constant
$C$ which depends only on $X, (L_{1},h_{1}),...,(L_{m},h_{m})$ such that
for all $p$ sufficiently large and any $(n-m,n-m)$-form $\phi$ of class $\mathscr{C}^{2}$,
we have
\begin{equation*}
\bigl|\bigl<\frac{1}{p^{m}}\gamma_{1,p}\wedge...\wedge\gamma_{m,p}-c_{1}(L_{1},h_{1})\wedge...\wedge c_{1}(L_{m},h_{m})
, \phi \bigr> \bigr|\leq \frac{C\log p}{p}\|\phi\|_{\mathscr{C}^{2}}.
\end{equation*}
\end{theorem}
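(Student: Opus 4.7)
The plan is to reduce the estimate to a quantitative bound on the normalized logarithm of the Bergman kernel and then to telescope the difference of wedge products.

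First I would write $u_{k,p} := \frac{1}{2p}\log P_{k,p}$ so that the identity $\frac{1}{p}\gamma_{k,p} = c_{1}(L_{k},h_{k}) + dd^{c}u_{k,p}$, already observed in Section 2, says that the difference between the normalized Fubini--Study current and the curvature current is an exact $dd^{c}$ of a globally defined $L^{1}$ function. Telescoping then gives
\begin{equation*}
\frac{1}{p^{m}}\gamma_{1,p}\wedge\cdots\wedge\gamma_{m,p} - \bigwedge_{k=1}^{m}c_{1}(L_{k},h_{k}) = \sum_{k=1}^{m} T_{k,p}\wedge dd^{c}u_{k,p},
\end{equation*}
where $T_{k,p} := \bigwedge_{j<k}\frac{1}{p}\gamma_{j,p}\wedge\bigwedge_{j>k}c_{1}(L_{j},h_{j})$ is a positive closed current of bidegree $(m-1,m-1)$ with uniformly bounded mass (the mass is controlled by the cohomology class, which is independent of $p$). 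Pairing with a test form $\phi$ and integrating by parts formally yields $\langle T_{k,p}\wedge dd^{c}u_{k,p},\phi\rangle = \langle u_{k,p}\, T_{k,p}, dd^{c}\phi\rangle$, so the whole estimate is reduced to showing
\begin{equation*}
\bigl| \langle u_{k,p}\, T_{k,p}, dd^{c}\phi \rangle \bigr| \leq \frac{C\log p}{p}\,\|\phi\|_{\mathscr{C}^{2}}.
\end{equation*}

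Next, the critical input is a pointwise estimate of the Bergman kernel for big line bundles with H\"older singularities. Using Demailly's regularization together with the construction of $L^{2}$ sections with prescribed pointwise norms via H\"ormander estimates (as in the proof of Theorem 4.1 and the standard arguments from Dinh--Ma--Marinescu), one shows that there exist constants $C>0$, $\alpha>0$ independent of $p$ such that on $X$,
\begin{equation*}
-C\log p \leq \log P_{k,p}(x) \leq C\log p + \alpha\log\dist(x, A_{k})^{-1}\chi_{U_{k}}(x),
\end{equation*}
where $U_{k}$ is a neighborhood of $A_{k}$; equivalently $|u_{k,p}|\leq C\log p/p$ away from $A_{k}$, and near $A_{k}$ the excess is only a bounded q.p.s.h.\ singularity divided by $p$. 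Thus $\|u_{k,p}\|_{L^{1}(X)} = O(\log p/p)$ and, more importantly, the mass of $T_{k,p}$ on a $\delta$-neighborhood of $A_{k}$ is at most $O(\delta)$ (for $\delta$ small), by the H\"older-with-singularities hypothesis on the $h_{k}$ and Proposition 2.1.

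I would then combine these two pieces. Away from $A:=A_{1}\cup\cdots\cup A_{m}$ the integration by parts is legitimate and one bounds the contribution by $(C\log p/p)\|dd^{c}\phi\|_{\infty}$ times the mass of $T_{k,p}$, which is uniformly $O(1)$. Near $A$ one regularizes $u_{k,p}$ and $T_{k,p}$ simultaneously via the standard approximation of q.p.s.h.\ functions, uses the H\"older-with-singularities control of the local potentials of $c_{1}(L_{j},h_{j})$ together with the mass-on-tubes bound above to make the boundary contribution from the regularization tend to zero, and finally lets the regularization parameter go to $0$. Summing over $k=1,\dots,m$ produces the claimed bound with $C$ depending only on $X$ and $(L_{k},h_{k})$.

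The main obstacle is the last step: making the integration by parts rigorous when the currents $T_{k,p}$ carry singularities and the potentials $u_{k,p}$ are only q.p.s.h., and controlling the error uniformly in $p$. The H\"older-with-singularities assumption is exactly the input that makes this work, because it yields a quantitative modulus of continuity for the local weights of $h_{j}$ outside $A_{j}$ and a controlled blow-up rate along $A_{j}$, and this is what forces the near-$A$ boundary terms produced by the regularization to vanish in the limit at the required rate $\log p/p$.
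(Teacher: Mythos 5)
The paper does not actually prove this statement: Theorem 4.6 is quoted verbatim as \cite[Proposition 5.1]{cmn}, so the only ``proof'' in the text is a citation. Your outline is, in substance, the strategy of that cited proof: write $\frac{1}{p}\gamma_{k,p}=c_{1}(L_{k},h_{k})+dd^{c}u_{k,p}$ with $u_{k,p}=\frac{1}{2p}\log P_{k,p}$, telescope the difference of wedge products, integrate by parts against $\phi$, and control $\int_{X}|u_{k,p}|\,T_{k,p}\wedge\omega^{n-m+1}$ by a Bergman kernel estimate with a $\log\dist(\cdot,A_{k})$ defect together with a uniform integrability statement for $\log\frac{1}{\dist(\cdot,A_{k})}$ against the currents $T_{k,p}$. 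Two points in your sketch are imprecise, though neither derails the argument. First, the lower bound on $\log P_{k,p}$ is not uniformly $-C\log p$: because the H\"older modulus of the local weights blows up like $\dist(\cdot,A_{k})^{-\delta}$, the peak-section construction also degrades near $A_{k}$, and the correct two-sided estimate is $|\log P_{k,p}(x)|\leq C\bigl(\log p+\bigl|\log\dist(x,A_{k})\bigr|\bigr)$ for $x\notin A_{k}$. Second, what is actually needed is the uniform-in-$p$ bound $\int_{X}\log\frac{1}{\dist(\cdot,A_{k})}\,T_{k,p}\wedge\omega^{n-m+1}\leq C$; your claim that the mass of $T_{k,p}$ on a $\delta$-tube around $A_{k}$ is $O(\delta)$ is stronger than necessary and not obviously true, whereas a bound $O(\delta^{\tau})$ for some $\tau>0$ suffices and is what the Chern--Levine--Nirenberg-type inequalities for currents with H\"older-singular potentials in general position actually deliver. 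These two ingredients (the kernel estimate and the CLN-type bound), plus the justification of the integration by parts for currents with unbounded potentials, constitute essentially all of the work in \cite{cmn}; as written, your proposal identifies them correctly but assumes them rather than proving them, which is the same division of labor the paper itself makes by citing the result.
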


To prove Theorem 1.5, we need the following result \cite[Theorem 15.1.6]{hl}.
\begin{theorem}
Let $P_{A}$ be the set of all functions of the form $p^{-1}\log |f(z)|$
where $p$ is a positive integer and $f$ an entire function $\not \equiv 0$ in $\mathbb{C}^{n}$.
Then the closure of $P_{A}$ in $L^{1}_{loc}(\mathbb{C}^{n})$ consists of
all plurisubharmonic functions.
\end{theorem}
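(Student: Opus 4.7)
The inclusion $\overline{P_A} \subseteq \PSH(\C^n)$ is the easy direction: for $f \in \Oc(\C^n)$ with $f \not\equiv 0$, the function $\log|f|$ is plurisubharmonic, hence so is every element of $P_A$; and the cone of psh functions is closed in $L^{1}_{loc}$ (an $L^{1}_{loc}$-limit of psh functions, if not identically $-\infty$, coincides a.e.\ with a psh function). So the content of the theorem is the reverse inclusion: every psh $u$ on $\C^n$ is an $L^{1}_{loc}$-limit of functions $p^{-1}\log|f|$ with $f$ entire. The plan is a Demailly-style Bergman kernel regularization powered by H\"ormander's $L^2$ estimate for $\dbar$.

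After reducing to the case where $u$ has at most logarithmic growth at infinity (for instance by replacing $u$ outside a large ball by $\max(u,\, A + \log^+|z|)$), for each integer $p \ge 1$ I would introduce the Hilbert space
\[
\H_p := \Bigl\{ f \in \Oc(\C^n) : \|f\|_p^2 := \int_{\C^n} |f|^2 e^{-2pu}(1+|z|^2)^{-N_p}\, dV < \infty \Bigr\},
\]
with $N_p$ chosen so that polynomials of bounded degree lie in $\H_p$, and consider its diagonal Bergman kernel $K_p(z) := \sup\{|f(z)|^2 : f \in \H_p,\ \|f\|_p \le 1\}$. The core estimate is the sandwich $\tfrac{1}{2p}\log K_p = u + O(\log p / p)$ in $L^{1}_{loc}$. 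The upper bound follows from the sub-mean-value property applied to $\log|f|^2$ on balls of radius $\sim p^{-1/2}$. The lower bound at a given point $z_0$ comes from solving $\dbar v = \dbar \chi$ for a smooth cutoff $\chi$ equal to $1$ near $z_0$, using H\"ormander's $L^2$ estimate with the weight $2pu(z) + 2(n+\varepsilon)\log|z-z_0|$; the logarithmic singularity forces $v(z_0) = 0$, so $f := \chi - v$ is entire with $f(z_0) = 1$ and $\|f\|_p^2 \le C p^{n}$, yielding $K_p(z_0) \gtrsim p^{-n} e^{2pu(z_0)}$.

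To extract a \emph{single} section from the Bergman kernel $K_p = \sum_\alpha |f_{\alpha,p}|^2$ (where $\{f_{\alpha,p}\}$ is an orthonormal basis), I would restrict to the finite-dimensional subspace $V_p \subset \H_p$ spanned by polynomials of degree $\le C p$. Since $\dim V_p \sim p^{n}$ grows only polynomially, the loss $p^{-1}\log(\dim V_p) = O(\log p / p)$ is absorbed into the error term. A standard maximum-selection argument then produces, for each $p$, a single $f_p \in V_p$ for which $\tfrac{1}{p}\log|f_p|$ still converges in $L^{1}_{loc}$ to $u$, using the elementary inequality $\max_\alpha |f_{\alpha,p}(z)|^2 \le K_p(z) \le (\dim V_p)\max_\alpha |f_{\alpha,p}(z)|^2$.

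The main obstacle is the lower bound in the sandwich, which requires enough regularity and strict positivity of the weight $2pu$ to run H\"ormander's machinery. The standard workaround is to first convolve $u$ with a smoothing kernel, obtaining smooth psh $u_\varepsilon \searrow u$, carry out the Bergman construction with $u_\varepsilon$ in place of $u$, and then diagonalize $\varepsilon = \varepsilon(p) \to 0$ with $p \to \infty$. The other subtlety — moving from the kernel $K_p$ (a sum of squares) to a single entire $f_p$ — is a softer but genuine point, and is precisely what makes the statement as worded, with $p^{-1}\log|f|$ for a single $f$, the sharp form of the Bergman-kernel approximation.
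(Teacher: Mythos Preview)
The paper does not prove this statement; it quotes it verbatim as a black box from H\"ormander's book \cite[Theorem 15.1.6]{hl} and immediately uses it to derive Proposition~4.8. So there is no ``paper's own proof'' to compare against beyond the citation. Your sketch follows the Demailly--Bergman-kernel route, which is a valid modern approach and is close in spirit to H\"ormander's original argument, both ultimately resting on $L^2$ estimates for $\bar\partial$ to manufacture holomorphic peak sections.

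That said, there is one genuine gap in your outline. The passage from the diagonal Bergman kernel $K_p=\sum_\alpha |f_{\alpha,p}|^2$ to a \emph{single} entire $f_p$ is not delivered by the pointwise inequality
\[
\max_\alpha |f_{\alpha,p}(z)|^2 \le K_p(z) \le (\dim V_p)\,\max_\alpha |f_{\alpha,p}(z)|^2,
\]
because the index $\alpha$ achieving the maximum depends on $z$; no single basis vector need satisfy the lower bound across a whole compact set, and any fixed $f_{\alpha,p}$ may well vanish on a set of positive measure there. What you call a ``maximum-selection argument'' is in fact an \emph{averaging} argument: for $c$ uniform on the unit sphere of $V_p$ and $f_c:=\sum_\alpha c_\alpha f_{\alpha,p}$ one has
\[
\int \log|f_c(z)|\,d\sigma(c)=\tfrac{1}{2}\log K_p(z)+C_{\dim V_p},\qquad C_N=O(\log N),
\]
so by Fubini a positive-measure set of $c$'s gives $p^{-1}\log|f_c|$ close to $u$ in $L^1$ on any prescribed compact, and a diagonal extraction over an exhaustion of $\mathbb{C}^n$ finishes. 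Without this step your argument only yields convergence of the envelope $\tfrac{1}{2p}\log K_p$, which is Demailly's regularization theorem but not the single-function statement you are asked to prove.
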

Let $\omega_{0}$ be the Fubini-Study form with mass $1$ in $\mathbb{P}^{n}$.
Given any positive closed current $T$ of bidegree $(1,1)$ with mass $1$ in $\mathbb{P}^{n}$.
Then by $dd^{c}$-lemma \cite[Lemma 8.6]{de3},
there exists a q.p.s.h. function $\varphi$ such that
\begin{equation*}
T-\omega_{0}=dd^{c}\varphi.
\end{equation*}
Therefore $T$ corresponds to an entire plurisubharmonic function $\psi$
in $\mathbb{C}^{n}$ which belongs to the Lelong class (cf. \cite[Example 2.2]{gz}). By Theorem 4.7,
there exists a sequence $\{p^{-1}\log |f_{p}|\}$ which converges to $\psi$.
Since holomorphic functions in $\mathbb{C}^{n}$ can be approximated by polynomials,
by using diagonal argument, we can choose a sequence of polynomials $g_{p}$ of degree $\leq p$
such that $\{p^{-1}\log |g_{p}|\}$ converges to $\psi$. It is possible since all such $p^{-1}\log |g_{p}|$ and
$\psi$ belong to the Lelong class.
Note that $g_{p}$ can be regarded as a homogeneous polynomial of degree $p$ in $\mathbb{C}^{n+1}$,
it induces a global section $S_{p}\in H^{0}(\mathbb{P}^{n}, \mathcal{O}(p))$.
Hence by Lelong-Poincar\'{e} formula, $\frac{1}{p}[S_{p}=0]$ converges weakly to $T$.
We obtain the following result due to Oka.
\begin{proposition}
Given any positive closed current $T$ of bidegree $(1,1)$ with mass $1$ in $\mathbb{P}^{n}$,
there exists a sequence of $\{S_{p}\}_{p\geq 1}$, $S_{p}\in\mathbb{P}H^{0}(\mathbb{P}^{n}, \mathcal{O}(p))$,
such that
\begin{equation*}
\frac{1}{p}[S_{p}=0]\rightarrow T
\end{equation*}
in the weak sense of currents.
\end{proposition}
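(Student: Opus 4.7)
The plan is to reduce the assertion to a density statement for plurisubharmonic functions. First I would invoke the $dd^{c}$-lemma on $\mathbb{P}^{n}$: writing $\omega_{0}=\omega_{FS}$ for the normalized Fubini-Study form, there is a quasi-plurisubharmonic function $\varphi$ on $\mathbb{P}^{n}$ with $T-\omega_{0}=dd^{c}\varphi$. Restricting to a standard affine chart $\mathbb{C}^{n}\hookrightarrow\mathbb{P}^{n}$ and adding the local potential of $\omega_{0}$ produces a genuine plurisubharmonic function $\psi$ on $\mathbb{C}^{n}$ belonging to the Lelong class $\mathcal{L}(\mathbb{C}^{n})$, i.e.\ $\psi(z)\leq \tfrac{1}{2}\log(1+|z|^{2})+O(1)$, whose Monge-Amp\`ere-free characteristic $dd^{c}\psi$ extends across infinity to exactly $T$.

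Next I would approximate $\psi$. By Theorem 4.7 there is a sequence $p^{-1}\log|f_{p}|\to\psi$ in $L^{1}_{\mathrm{loc}}(\mathbb{C}^{n})$ with $f_{p}$ entire and $f_{p}\not\equiv 0$. The step I expect to be the main obstacle is the refinement from \emph{entire} $f_{p}$ to \emph{polynomial} $g_{p}$ of degree at most $p$ with $p^{-1}\log|g_{p}|\to\psi$, in a manner that keeps the sequence inside the Lelong class. The plan is a diagonal argument: expand each $f_{p}$ as a Taylor series around $0$, truncate at some degree $d(p,N)$ to obtain a polynomial $g_{p,N}$ such that $p^{-1}\log|g_{p,N}|$ is close to $p^{-1}\log|f_{p}|$ on the ball of radius $N$; Cauchy's estimates and the Lelong growth bound on $p^{-1}\log|f_{p}|$ show that one may take $d(p,N)\leq p$ for $p$ large depending on $N$, after which a Cantor diagonal extraction yields polynomials $g_{p}$ with $\deg g_{p}\leq p$ and $p^{-1}\log|g_{p}|\to\psi$ in $L^{1}_{\mathrm{loc}}$. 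Care is needed to prevent the normalized potentials from escaping the Lelong class, but this is controlled by the uniform upper bound inherited from $\psi$.

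Finally, each polynomial $g_{p}$ of degree at most $p$ homogenizes to a homogeneous polynomial $\widetilde{g}_{p}$ of degree exactly $p$ on $\mathbb{C}^{n+1}$ and hence defines a section $S_{p}\in H^{0}(\mathbb{P}^{n},\mathcal{O}(p))$. Writing things downstairs, $p^{-1}\log|\widetilde{g}_{p}|-\log|z|$ descends to a global potential of $\tfrac{1}{p}[S_{p}=0]-\omega_{0}$. Since $p^{-1}\log|g_{p}|\to\psi$ on the affine chart and the boundary-at-infinity correction is exactly what converts $\psi$ back to $T-\omega_{0}$, the $L^{1}_{\mathrm{loc}}$ convergence together with continuity of $dd^{c}$ on currents gives
\begin{equation*}
\frac{1}{p}[S_{p}=0]=\omega_{0}+dd^{c}\!\left(\tfrac{1}{p}\log|\widetilde{g}_{p}|-\log|z|\right)\longrightarrow \omega_{0}+dd^{c}\varphi=T
\end{equation*}
in the weak sense of currents, by the Lelong-Poincar\'e formula applied to $S_{p}$. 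This closes the argument.
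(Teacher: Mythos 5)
Your argument is essentially identical to the paper's: apply the $dd^{c}$-lemma to get $T-\omega_{0}=dd^{c}\varphi$, pass to a Lelong-class plurisubharmonic $\psi$ on $\mathbb{C}^{n}$, approximate via Theorem 4.7, replace entire functions by polynomials of degree $\leq p$ via a diagonal argument, homogenize to sections of $\mathcal{O}(p)$, and conclude by Lelong--Poincar\'e. The one point to watch is your justification of the degree bound: a straight Taylor truncation controlled by Cauchy estimates on $B(0,N)$ forces a truncation degree of order $C_{N}\,p$ with $C_{N}\to\infty$ as $N\to\infty$ (since $\log\frac{N+1}{N}\to 0$), so the bound $\deg g_{p}\leq p$ really rests on the classical Siciak/Oka approximation of Lelong-class functions by $\frac{1}{p}\log|g_{p}|$ (e.g.\ via logarithmically homogeneous potentials on $\mathbb{C}^{n+1}$), which is exactly the step the paper also invokes without detail.
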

This section concludes with the proof of Theorem 1.5.
\begin{proof}
By Proposition 4.8, there exists a sequence of $\{S_{p}\}_{p\geq 1}$, $S_{p}\in\mathbb{P}H^{0}(\mathbb{P}^{n}, \mathcal{O}(p))$,
such that
\begin{equation*}
\frac{1}{p}[S_{p}=0]\rightarrow T.
\end{equation*}
Denote by $\delta_{S_{p}}$ the Dirac measure at the point $S_{p}\in\mathbb{P}H^{0}(\mathbb{P}^{n}, \mathcal{O}(p))$.
Choose a sequence of smooth probability measures $\{\mu_{pj}\}$ which is an approximation of $\delta_{S_{p}}$.
Note that the map
\begin{equation*}
\begin{split}
\mathbb{P}H^{0}(\mathbb{P}^{n}, \mathcal{O}(p))&\rightarrow\mathbb{C} \\
V_{p}&\mapsto\bigl<[V_{p}=0],\phi \bigr>
\end{split}
\end{equation*}
is continuous. Then there exists a neighborhood $E_{p}\subset\mathbb{P}H^{0}(\mathbb{P}^{n}, \mathcal{O}(p))$ of $S_{p}$
such that
\begin{equation*}
|\bigl<[V_{p}=0],\phi \bigr>-\bigl<[S_{p}=0],\phi \bigr>|\leq 1,
\end{equation*}
$\forall V_{p}\in E_{p}$ and $(n-1,n-1)$-form $\phi$ of class $\mathscr{C}^{2}$ with $\|\phi\|_{\mathscr{C}^{2}}\leq 1$.
Hence
\begin{equation*}
\begin{split}
& |\bigl<\frac{1}{p}[V_{p}=0]-T,\phi \bigr>| \\
& \leq |\bigl<\frac{1}{p}[V_{p}=0]-\frac{1}{p}[S_{p}=0],\phi \bigr>|+|\bigl<\frac{1}{p}[S_{p}=0]-T,\phi \bigr>| \\
& \leq\frac{1}{p}\|\phi\|_{\mathscr{C}^{2}}+|\bigl<\frac{1}{p}[S_{p}=0]-T,\phi \bigr>|\rightarrow 0, \\
\end{split}
\end{equation*}
$\forall V_{p}\in E_{p}$ and $(n-1,n-1)$-form $\phi$ of class $\mathscr{C}^{2}$.
Since $\mu_{pj}\rightarrow\delta_{S_{p}}$ as measures when $j\rightarrow\infty$, there exists
an index $j_{p}$ satisfying
\begin{equation*}
|\mu_{pj_{p}}(E_{p})-\delta_{S_{p}}(E_{p})|=|\mu_{pj_{p}}(E_{p})-1|\leq\frac{1}{p^{2}}.
\end{equation*}
Denote by $E_{p}^{c}$ the complement of $E_{p}$.
Set $\sigma_{p}:=\mu_{pj_{p}}$. So $\sigma_{p}(E_{p}^{c})\leq\frac{1}{p^{2}}$.
Yau's theorem \cite{y} implies that there exists a smooth real function $u_{p}$ with
$(\omega_{FS}+dd^{c}u_{p})^{d_{p}}=\sigma_{p}$.
Then the theorem follows from the same argument in the proof of Proposition 4.5.
This completes the proof.
\end{proof}

\noindent
G. Shao,
Universit{\'e} Paris-Sud, Math{\'e}matique - B{\^a}timent 425, 91405
Orsay, France. {\tt shaoguokuang@gmail.com}

\end{document}